\definecolor{pigpink}{HTML}{FDD7E4}
\definecolor{lcyan}{HTML}{E0FFFF}
\definecolor{mint}{HTML}{98FF98}
\newtheorem{theorem}{Theorem}[section]
\newtheorem{lemma}[theorem]{Lemma}
\newtheorem{definition}[theorem]{Definition}
\newtheorem{remark}[theorem]{Remark}
\newtheorem{algorithm}[theorem]{Algorithm}
\newcommand{\Kt}{\tilde{K}}
\newcommand{\Ft}{\tilde{F}}
\newcommand{\ocal}{\mathcal{O}}
\newcommand{\M}{\mathcal{M}}
\title{A geometric approach to Phase Response Curves and its numerical computation through the parameterization method}
\author{Alberto P\'erez-Cervera$^{1}$, Tere M-Seara$^{1}$ and Gemma Huguet\\
	\parbox{12.5cm}{
		\small
		\begin{itemize}
			\item[$^1$]
			Departament de Matem\`atiques, Universitat Polit\`ecnica de
			Catalunya, Avda. Diagonal 647, 08028 Barcelona. BGSMATH \\
		\end{itemize}
	}}
\begin{document}
%----------------------------------------------------------------------------
\date{}
\maketitle

\noindent \textbf{Corresponding author:} Alberto P\'erez-Cervera,
\texttt{alberto.perez@upc.edu} \\

\noindent \textbf{Keywords:} Phase Response Curves, isochrons, phase equation, parameterization method, NHIM, synchronization. \\

\noindent \textbf{MSC2000 codes:} 37D10, 92B25, 65P99, 37N30 \\

\section*{Abstract}

The Phase Response Curve (PRC) is a tool used in neuroscience that measures the phase shift experienced
by an oscillator due to a perturbation applied at different phases of the limit cycle. In this paper we present a new approach to PRCs based on the parameterization method. 
The underlying idea relies on the construction of a periodic system
whose corresponding stroboscopic map has an invariant curve. 
We demonstrate the relationship between the internal dynamics of this
invariant curve and the PRC, which yields a method to numerically compute
the PRCs.  
Moreover, we link the existence properties of this invariant curve as the amplitude of the perturbation is increased with changes in the PRC waveform and with the geometry of isochrons.
The invariant curve and its dynamics will be computed by means of the parameterization method consisting of solving an invariance equation.
We show that the method to compute the PRC can be extended beyond the breakdown of the curve by means of introducing a modified invariance equation.
The method also computes the amplitude response functions (ARCs) which provide information on the 
displacement away from the oscillator due to the effects of the perturbation. 
Finally, we apply the method to several classical models in  neuroscience to illustrate how the results herein 
extend the framework of computation and interpretation of the PRC and ARC for perturbations of large amplitude and not necessarily pulsatile.

\section{Introduction}

Oscillations are ubiquitous in the brain \cite{buzsaki2006rhythms}. From cellular to population level, there exist numerous recordings 
showing periodic activity.
Mathematically, oscillations correspond to attracting limit cycles in the phase space whose dynamics can be described by a phase variable. 
Under generic conditions, the phase can be extended to a neighbourhood of the limit cycle via the concepts of asymptotic phase and isochrons 
\cite{Guckenheimer74, winfree1974patterns}. 
Isochrons are the sets of points in the basin of attraction of a limit cycle whose orbits approach asymptotically the orbit of a given point on the limit cycle. 
We associate to these points the same phase as the base point on the limit cycle. 
The regions outside the basin of attraction are called phaseless sets \cite{Guckenheimer74}.

When the oscillator is perturbed with a transient external stimulus, the trajectory of each point is displaced away from the limit cycle 
and set to the isochron of a different point, thus causing a change in the phase of the oscillation. 
Phase displacements due to perturbations of the oscillator that act at different phases of the limit cycle are described by the so-called phase response curves (PRC) 
\cite{ErmentroutTerman2010, schultheiss2011phase}.
PRCs constitute a useful tool to reduce the dynamics of the oscillator --which can be of high dimension-- to a single equation for the phase.
This approach, based on the phase reduction, has been extensively used to study weakly perturbed
nonlinear oscillators and predict synchronization properties in neuronal networks \cite{canavier2010pulse, hoppensteadt2012weakly}.

PRCs can be measured for arbitrary stimuli, both experimentally and numerically, in individual neurons and in neuronal populations, assuming that there is enough time to allow the perturbed trajectory to relax back to the limit cycle.
For perturbations that are infinitesimally small in duration (pulsatile) and amplitude, one obtains the so called infinitesimal PRC (iPRC).
The iPRC corresponds to the first order approximation of the PRC with respect to the amplitude and it can be easily computed solving the Adjoint equation \cite{ermentrout1991}. 
Perturbations of small amplitude but longer duration are assumed to sum linearly, thus the phase change is obtained by convolving the input waveform with the iPRC. Of course, this approximation fails when the perturbation is strong.

Recently, there has been a large effort to compute isochrons and iPRCs accurately up to high order
\cite{Guillamon2009,huguet2013,mauroy2012, osinga2010continuation}. 
Moreover, the isochrons allow for the control of the phase 
for trajectories away from the limit cycle. One can extend the phase coordinate system to a 
neighbourhood of the limit cycle by incorporating an amplitude variable. This variable is 
transverse to the periodic orbit and controls the ``distance'' to the limit cycle \cite{castejon2013phase, wedgwood2013phase}. 
This coordinate is also known as isostable \cite{wilsonermentrout18,moehliswilsonpre2016}.
It is therefore natural to compute the amplitude response curve (ARC) (or isostable response curve IRC) 
which, analogously to the PRC, provides the shift in amplitude due to a perturbation.

In this paper we present a methodology to compute the PRC for perturbations of large amplitude and not necessarily pulsatile using the
parameterization method. 
The underlying idea of the method is to construct a particular periodic 
perturbation consisting of the repetition of the transient stimulus followed by a resting period when no perturbation acts.
For this periodic system we consider its corresponding stroboscopic map and we prove that, under certain conditions, the map has an invariant curve. 
The core mathematical result of this paper is Theorem~\ref{thm:theorem1}, which gives the existence of the invariant curve and
provides the relationship between the PRC and the internal dynamics of the curve.
To prove the Theorem we use the coordinate system given by the phase and the amplitude variables. In these variables, 
the map is contractive in the amplitude direction and one can apply the results about the existence of invariant curves in \cite{NippS92, NippS2013}. 
Working in the original variables, one can also use theorems on the persistence of normally hyperbolic invariant manifolds with \textit{a posteriori} format  
\cite{BatesLuZeng2008, haroDeLaLlave}. 
That is, one formulates a functional equation for the parameterization of the invariant curve and its internal dynamics.
Then, if there exists an approximate solution of this invariance equation, which satisfies some explicit nondegeneracy conditions, there
is a true solution nearby.
Moreover, these \textit{a posteriori} theorems provide a numerical algorithm to compute the invariant curve and its internal dynamics based on a ``quasi-Newton'' method. 
We will implement this algorithm and compute the PRC using the result in Theorem~\ref{thm:theorem1}. 
We also present an extension of the algorithm to compute the PRC after the breakdown of the invariant curve
(possibly because it loses its normal hyperbolic properties).
In this case, it is possible to write an invariance equation which can be solved approximately using similar algorithms obtaining the PRC and the ARC.

We apply our methodology to some representative examples in the literature, namely the Morris-Lecar model and the Wilson-Cowan equations, 
with a sinusoidal type of stimulus. 
As the amplitude is increased, we detect the breakdown of the curve, which we can relate with the geometry of the isochrons. 
Moreover, we use the modified version of the algorithms to compute the PRC beyond the breakdown of the invariant curve. 
We compare the PRC computed using our methodology  with the 
one computed using the standard approach, showing a good agreement. 
This accuracy is maintained for all the  amplitudes, including the transition from type 0 to type 1 PRC \cite{ glass1988clocks, glass1984discontinuities}, 
which occurs when the perturbation sends points of the limit cycle to the phaseless sets.

The paper is organized as follows: in Section~\ref{sec:math_formalism} we set the mathematical formalism. 
In Section~\ref{sec:method} we state the main result: Theorem~\ref{thm:theorem1}.
In Section~\ref{sec:num_method} we describe the numerical algorithms based on Theorem~\ref{thm:theorem1} and present the extension for the case when the invariant curve does not exist but the PRC can still be computed.
In Section~\ref{sec:num_ex} we present numerical results for some representative examples.
We finish with a discussion in Section~\ref{sec:discussion}. The Appendix contains the algorithms to compute the PRC based on the parameterization method described along the manuscript.

\section{Mathematical Formalism}\label{sec:math_formalism}

Let us consider a smooth  system of ODEs given by
\begin{equation}\label{eq:equacion0}
    \dot{x} = X(x) + Ap(t; A), \quad \quad \quad \quad x \in \mathbb{R}^n,
\end{equation}
where $p(t; A)$ is a function with compact support satisfying $p(t; A)=0$ everywhere except for $0 \leq t \leq T_{pert}$ and
$\max\limits_{t \in \mathbb{R}} |p(t; A)|=1$. Therefore, $A$ determines the amplitude of the perturbation.

We assume that for $A = 0$ (i.e the unperturbed case) system \eqref{eq:equacion0} has a hyperbolic attracting limit cycle $\Gamma_0$ of period $T$
$$
\Gamma_0:=\{\gamma_0(t), t \in [0,T)\},
$$
being $\gamma_0$  a $T$-periodic solution of \eqref{eq:equacion0}.

We will denote by $\psi_A(t;t_0, x)$ the general solution of system \eqref{eq:equacion0}.
As system \eqref{eq:equacion0} is autonomous for $A = 0$, we know that  $\psi_0(t; t_0, x)=\phi_0(t-t_0;x)$, where $\phi_0(t;x)$ is the flow of the unperturbed system.
Moreover, abusing of notation, we will denote by $\phi_A(t;x)=\psi_A(t;0, x)$.

For the unperturbed case, we can define a parameterization $K_0$ for $\Gamma_0$ by means of the phase variable $\theta = \frac{t}{T}$, that is,
\begin{equation}\label{eq:paramK0}
K_0 : \mathbb{T}:=[0,1) \rightarrow \mathbb{R}^n,
\end{equation}
such that $K_0(\theta) = \gamma_0(\theta T)$.
Thus, the dynamics for $\theta$ satisfies
\begin{equation} \label{eq:theta}
\dot{\theta} = 1/T, \quad \quad \quad \text{with solution} \quad \Psi_0(t; \theta_0) = \theta_0 + \frac{t}{T}.
\end{equation}
Observe that $\phi_0(t; K_0(\theta)) = K_0(\theta + \frac{t}{T}) = K_0(\Psi_0(t; \theta))$.

Consider a point $x$ in the basin of attraction $\mathcal{M}$ (stable manifold) of the limit cycle $\Gamma_0$.
Since $\Gamma_0$ is a Normally Hyperbolic Invariant Manifold (NHIM), by NHIM theory (see \cite{Fenichel71, Guckenheimer74, HirschPS77}), 
there exists a unique point on the limit cycle, $K_0(\theta) \in \Gamma_0$,  such that
\begin{equation}\label{eq:nhimIsochron}
d\big(\phi_0(t; x), \phi_0(t; K_0(\theta))\big) \leq Ce^{-\lambda t}, \quad \text{for} \quad t \geq 0,
\end{equation}
where $-\lambda < 0$ is the maximal Lyapunov exponent of $\Gamma_0$.
This property allows us to assign a phase $\theta$ to any point $x \in \mathcal{M}$.
Indeed, the phase function is defined as (see \cite{Guckenheimer74}):
\begin{equation}
\begin{aligned}
\Theta:\mathcal{M} \subset \mathbb{R}^{n} &\to \mathbb{T},\\
x &\mapsto \Theta(x) = \theta,
\end{aligned}\label{eq:thetaFunction}
\end{equation}
such that equation \eqref{eq:nhimIsochron} is satisfied. 
The sets of points with the same asymptotic phase are called isochrons 
\cite{winfree1974patterns}. 
The sets of points where the asymptotic phase is not defined are called phaseless sets \cite{Guckenheimer74}.
Clearly, for an attracting Normally Hyperbolic Invariant Manifold the phaseless sets are contained in  $\mathbb{R}^n\setminus \mathcal{M}$.

In this context, the PRC (see \cite{ErmentroutTerman2010}) for the perturbation $Ap(t; A)$ in \eqref{eq:equacion0} is defined as
\begin{equation}\label{eq:prcDefinition}
PRC(\theta, A) = \Theta\big(\phi_A(T_{pert};K_0(\theta))\big) - \Theta\big(\phi_0(T_{pert};K_0(\theta))\big),
\end{equation}
if $\phi_A(T_{pert};K_0(\theta))\in \mathcal{M}$. For the rest of the manuscript, abusing notation, we will denote by $\mathcal{M}$ a bounded neighbourhood of the periodic orbit $\Gamma_0$ 
such that $\bar{\mathcal{M}}$ is contained in the basin of attraction of $\Gamma_0$.

If we denote by
\begin{equation}\label{eq:xpertthetapert}
x_{pert}:=\phi_A(T_{pert};K_0(\theta)), \quad \theta_{pert}:= \Theta (x_{pert}),
\end{equation}
from \eqref{eq:theta} and \eqref{eq:prcDefinition} we have that
\begin{equation}\label{eq:prc1}
PRC (\theta,A) = \theta_{pert} - \left ( \theta + \frac{T_{pert}}{T} \right ).
\end{equation}
Moreover, since $\phi_A(T_{pert}+t; K_0(\theta))=\phi_0(t; x_{pert})$ and using the definition of the phase function given in \eqref{eq:thetaFunction}, we have that
\begin{equation}\label{eq:prc2}
PRC(\theta, A) = \Theta\big(\phi_A(T_{pert}+t; K_0(\theta))\big) - \Theta\big(\phi_0(T_{pert}+t; K_0(\theta))\big),
\end{equation}
for all $t \geq 0$.

The usual way to compute the PRC either experimentally or numerically is the following. First, one looks for the time  $t_1 \gg T_{pert}$ at which some
$x_i$-coordinate of the perturbed trajectory $\phi_A(t;K_0(\theta))$ reaches its maximum value after the perturbation is turned off. Then,
one compares time $t_1$ with the time $t_0$ which is closest to $t_1$ at which the unperturbed trajectory $\phi_0(t;K_0(\theta))$ reaches its maximum
(see Figure~\ref{fig:pertConsidered2}).
Finally, the PRC is given approximately by
\begin{equation}\label{eq:prc3}
\Delta \theta = \frac{t_1-t_0}{T}.
\end{equation}

This approach (that we will refer to as the \textit{standard method}) provides a good approximation of the PRC if the time to relax back to the oscillator $\Gamma_0$ is short either because there is a strong contraction
(the maximal Lyapunov exponent $-\lambda$ in \eqref{eq:nhimIsochron} is sufficiently negative) or the perturbation is weak ($A \ll 1$ in \eqref{eq:equacion0}).
Otherwise, one should wait several periods ($kT$, $k \in \mathbb{N}$ sufficiently large) before computing the phase difference.

In the next Sections we present theoretical and numerical results based on the parameterization method that yield novel algorithms to compute the PRC.

\begin{figure}[H]
\centering
{\includegraphics[width=105mm]{./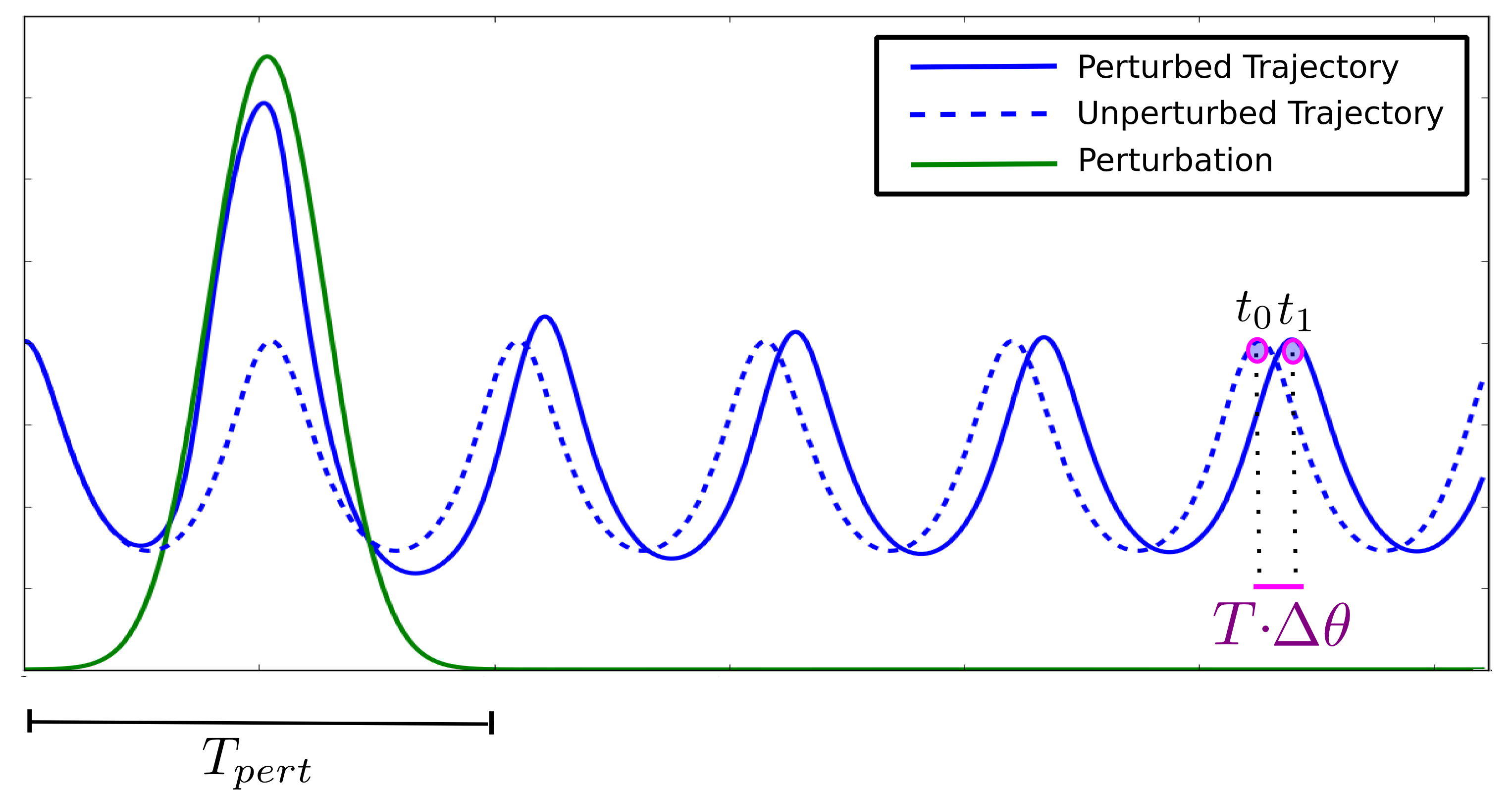}}
\caption{After the perturbation is turned off ($t > T_{pert}$), the trajectories relax back to the limit cycle
and a phase shift $\Delta \theta$ is experienced.} \label{fig:pertConsidered2}
\end{figure}

\section{Stroboscopic approach to compute the PRC by means of the parameterization method. Theoretical results.}\label{sec:method}

The perturbation $p(t;A)$ in \eqref{eq:equacion0} is not periodic.
However, we will introduce a periodic perturbation $\bar{p}(t;A)$ of period
$T':=T_{pert}+T_{rel}$, with $T_{rel} \gg T_{pert}$
which coincides with $p(t;A)$ for $0 \leq t \leq T'$.
Then, we consider the $T'$-periodic system
\begin{equation}\label{eq:equacion1}
    \dot{x} = X(x) + A\bar{p}(t;A), \quad \quad \quad \quad x \in \mathbb{R}^n,
\end{equation}
whose solutions coincide with the solutions of \eqref{eq:equacion0} for $0 \leq t \leq T'$. 
Since $\bar{p}(t;A)$ is periodic, we can define the stroboscopic map given by the flow of \eqref{eq:equacion1} at time $T'$ starting at $t=0$, i.e.
\begin{eqnarray}\label{eq:poincareMap}
F_{A}:\mathbb{R}^{n} &\to& \mathbb{R}^{n}, \notag \\
 x &\to& F_{A}(x) = \phi_A(T';x)=\phi_A(T_{pert} + T_{rel};x).
\end{eqnarray}
Using this approach, the formula for the PRC given in \eqref{eq:prc2} for $t=T_{rel}$ writes as
\begin{equation}\label{eq:prcStrDefinition}
PRC(\theta, A) = \Theta\big(F_A(K_0(\theta))\big) - \Theta\big(F_0(K_0(\theta))\big).
\end{equation}
Note that for $A=0$, one has
\begin{equation}\label{eq:invariancek0}
F_0(K_0(\theta))=\phi_0(T';K_0(\theta))=K_0(\theta + T'/T),
\end{equation}
and therefore
\begin{equation}\label{eq:gamma0}
\Gamma_0=\{ K_0(\theta), \, \theta \in [0,1)\}
\end{equation}
is an invariant curve of the map $F_0$. Moreover, by \eqref{eq:nhimIsochron} we have that for any $x \in \mathcal{M}$
\[
|F_0(x)-F_0(K_0(\theta))| \leq C e^{-\lambda T'},
\]
where $\theta=\Theta(x)$. Therefore, $\Gamma_0$ is a normally hyperbolic attracting invariant curve of the map $F_0$.

Let us recall here, following \cite{Fenichel71,Fenichel74,HirschPS77},
the definition of normally hyperbolic attracting invariant curve adapted to our problem.
\begin{definition}\label{def:nhim}
Let $F:M\to M$ a $C^r$ diffeomorphism on a $C^r$-differentiable manifold $M$.
Assume that there exists a manifold $\Gamma\subseteq M$ that
is  invariant  for $F$. We say that   $\Gamma \subset M$ is a hyperbolic attracting manifold if there
exists a splitting of the tangent bundle $TM$ into $DF$-invariant
sub-bundles, i.e,
\[TM= E^s\oplus T\Gamma,\]
and constants $C>0$ and 
\begin{equation}\label{rates0}
0<\lambda_+ < \eta^ {-1} \le 1,
\end{equation}
such that for all  $x\in\Gamma$ we have
\begin{equation}\label{characterization}
\begin{split}
v\in E^s_x  &\Leftrightarrow \|DF^k(x) \, v\|\leq C\lambda_+^k\|v\|,\,  \textrm{ for all } k\geq 0,\\
v\in T_x\Gamma &\Leftrightarrow \|DF^k (x) \, v \|\leq C\eta^{|k|} \|v\|,\, \textrm{ for all } k \in \mathbb{Z}.\\
\end{split}
\end{equation}
\end{definition}

For our problem, following \cite{Guillamon2009, huguet2013},
we can differentiate  the invariance equation \eqref{eq:invariancek0} of $\Gamma_0$  obtaining
\begin{equation}\label{eq:nhim1}
DF_0(K_0(\theta))DK_0(\theta)=DK_0 \left (\theta+\frac{T'}{T} \right ),
\end{equation}
and, for $n=2$, in  \cite{Guillamon2009, huguet2013} it is shown that  there exists $N(\theta)$ such that
\begin{equation}\label{eq:nhim2}
DF_0(K_0(\theta))N(\theta)=e^{-\lambda T'}N \left(\theta+\frac{T'}{T}\right).
\end{equation}
Therefore, for any $x=K_0(\theta) \in \Gamma_0$, there is a splitting $\mathbb{R}^2=<DK_0(\theta)>\oplus<N(\theta)>$ and rates 
$\lambda_+=e^{-\lambda T'}$ and $\eta=1$, satisfying \eqref{characterization} thus showing that $\Gamma_0$ is a normally hyperbolic attracting manifold. This result can be generalized to $n>2$ using the information provided by the variational equations along the periodic orbit $\Gamma_0$ (see Remark \ref{rem:gen_n}).

Next, we present the main result of this paper which provides the existence of an invariant curve $\Gamma_A$ of the stroboscopic map $F_A$ \eqref{eq:poincareMap} which is $\mathcal{O} (Ae^{-\lambda T_{rel}})$-close to $\Gamma_0$ and relates its
internal dynamics with the PRC of $\Gamma_0$ in system \eqref{eq:equacion0} (see Fig. \ref{fig:methodSketch}).
The proof of this Theorem is given in Section~\ref{sec:proofs}.

\begin{theorem}\label{thm:theorem1}
Consider the stroboscopic map of the $T'$-periodic system \eqref{eq:equacion1} defined in \eqref{eq:poincareMap} with $T'=T_{pert}+T_{rel}$ 
and let $\Gamma_0$ be the normally hyperbolic attracting invariant curve of the map $F_0$, parameterized by $K_0$, such that
\[F_0 \circ K_0 = K_0 \circ f_0,\]
where $f_0(\theta)=\theta+T'/T$. 

Consider $A>0$. Assume that $A$ is small or $A=\ocal(1)$ and the following hypothesis are satisfied:
\begin{itemize}
 \item[\textbf{H1}] 
 $\phi_A(T_{pert};x) \in \mathcal{M}$ for any $x \in \Gamma_0$, 
 \item[\textbf{H2}] 
 The function $PRC(\theta,A)+\theta$ is a monotone function,
 \item[\textbf{H3}] 
 $T_{rel}$ is sufficiently large,
\end{itemize}
then, there exists an invariant curve $\Gamma_A$ of the map $F_A$.
%that is exponentially close to $\Gamma_0$.
Moreover, there exist a parameterization $K_A$ of $\Gamma_A$ and a periodic function $f_A$ such that
\begin{itemize}
 \item[]
\begin{equation}\label{eq:invariance_teo}
F_A(K_A(\theta))=K_A(f_A(\theta)),
\end{equation}
\item[]
\[
 K_A(\theta)=K_0(\theta)+\mathcal{O} (Ae^{-\lambda T_{rel}}),
\]
\item[]
\[
PRC(\theta,A)= f_A(\theta) - f_0(\theta) + \mathcal{O} (Ae^{-\lambda T_{rel}}).
\]
\end{itemize}
\end{theorem}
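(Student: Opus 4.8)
The plan is to exploit the factorization of the stroboscopic map induced by the compact support of the perturbation. Since $\bar p(t;A)$ vanishes for $t\notin[0,T_{pert}]$, the map splits as $F_A=\Phi\circ P_A$, where $P_A:=\phi_A(T_{pert};\cdot)$ is the ``perturbation map'' and $\Phi:=\phi_0(T_{rel};\cdot)$ is the relaxation map of the \emph{unperturbed} flow. Two features drive the proof: $P_A$ depends smoothly on $A$ and $P_0=\phi_0(T_{pert};\cdot)$ carries $\Gamma_0$ onto itself as the rigid rotation $\theta\mapsto\theta+T_{pert}/T$, so $P_A=P_0+\ocal(A)$ near $\Gamma_0$; and $\Phi$ contracts a neighbourhood of $\Gamma_0$ onto a tube of width $\ocal(e^{-\lambda T_{rel}})$ around it by \eqref{eq:nhimIsochron}. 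As a candidate approximate solution of the invariance equation \eqref{eq:invariance_teo} I would take $\tilde K_A:=K_0$ and $\tilde f_A(\theta):=\Theta\big(F_A(K_0(\theta))\big)$, which is well defined by \textbf{H1} (so that $F_A(K_0(\theta))\in\mathcal M$), and record the \emph{exact} identity
\[
\tilde f_A(\theta)-f_0(\theta)=PRC(\theta,A),
\]
which is immediate from \eqref{eq:prcStrDefinition} together with $\Theta(F_0(K_0(\theta)))=f_0(\theta)$, a consequence of \eqref{eq:invariancek0}.

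The core estimate is that $(\tilde K_A,\tilde f_A)$ solves \eqref{eq:invariance_teo} with error $\ocal(Ae^{-\lambda T_{rel}})$. Write $y:=P_A(K_0(\theta))$; by the discussion above $y=K_0(\theta+T_{pert}/T)+\ocal(A)$, so $d(y,\Gamma_0)=\ocal(A)$, and by \textbf{H1} $y\in\mathcal M$. Applying the isochron estimate \eqref{eq:nhimIsochron} — whose constant $C$ can be taken proportional to $d(y,\Gamma_0)$ on a compact neighbourhood of $\Gamma_0$, since the strong stable foliation is $C^1$ and the fibre contraction is linear to leading order — and using $\phi_0(T_{rel};K_0(\Theta(y)))=K_0(\Theta(y)+T_{rel}/T)$ together with $\Theta(\phi_0(T_{rel};y))=\Theta(y)+T_{rel}/T$, one gets
\[
F_A(K_0(\theta))=\phi_0(T_{rel};y)=K_0\big(\Theta(F_A(K_0(\theta)))\big)+\ocal(Ae^{-\lambda T_{rel}})=\tilde K_A\big(\tilde f_A(\theta)\big)+\ocal(Ae^{-\lambda T_{rel}}).
\]

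It remains to verify the nondegeneracy hypotheses of an \emph{a posteriori} persistence theorem for normally hyperbolic invariant curves — either in the original variables \cite{haroDeLaLlave, BatesLuZeng2008} or, after passing to phase–amplitude coordinates in which $F_A$ is a graph transform over a fibre contraction, \cite{NippS92, NippS2013} — and then read off the conclusions. Differentiating $F_A=\Phi\circ P_A$ and using that $D\Phi$ contracts the normal bundle of $\Gamma_0$ by $\ocal(e^{-\lambda T_{rel}})$ while $DP_A$ stays bounded (because $T_{pert}$ is fixed and $A=\ocal(1)$), one obtains a normal rate $\lambda_+=\ocal(e^{-\lambda T_{rel}})$; the tangential rate $\eta$ is controlled by $\tilde f_A'(\theta)=(PRC(\theta,A)+\theta)'$, which by \textbf{H2} has constant sign, so $\tilde f_A$ is a monotone, degree one circle homeomorphism (a legitimate internal dynamics) and $\eta$ is bounded independently of $T_{rel}$. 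Hence for $T_{rel}$ large, i.e.\ \textbf{H3}, the spectral gap condition \eqref{rates0} is met, the approximate curve and its $F_A$-image remain in the neighbourhood of $\Gamma_0$ where $P_A$ lands in $\mathcal M$, and the theorem yields a genuine solution $(K_A,f_A)$ of \eqref{eq:invariance_teo} with $\|K_A-\tilde K_A\|$ and $\|f_A-\tilde f_A\|$ bounded by a constant (depending only on the hyperbolicity constants) times the invariance error, i.e.\ $\ocal(Ae^{-\lambda T_{rel}})$. Combined with the exact identity above, this gives the three displayed estimates.

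The main obstacle I foresee is the quantitative estimate of the second paragraph: extracting simultaneously the factor $A$ (from $d(P_A(K_0(\theta)),\Gamma_0)=\ocal(A)$) and the factor $e^{-\lambda T_{rel}}$ (from the relaxation contraction) requires the asymptotic phase $\Theta$ to be $C^1$ up to $\Gamma_0$ with a uniform Lipschitz bound and the fibrewise contraction of $\phi_0(t;\cdot)$ to be linear to leading order; both follow from the normal hyperbolicity of $\Gamma_0$ but must be handled with care, especially when $A=\ocal(1)$, where $P_A$ is \emph{not} close to the identity and one must ensure that the neighbourhood of $\Gamma_0$ involved (mapped into $\mathcal M$ by $P_A$, then contracted by $\Phi$) has a size independent of $A$ — precisely the content of \textbf{H1}, which is imposed on the whole curve $\Gamma_0$ rather than only infinitesimally.
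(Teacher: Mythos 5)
Your proposal is correct and is essentially the paper's own argument (the one given for $A=\ocal(1)$ in Section~\ref{sec:proofsA}): you take $\Gamma_0$ with the candidate inner dynamics $\Theta\circ F_A\circ K_0=PRC(\cdot,A)+f_0$ as an approximately invariant, approximately normally hyperbolic curve --- the invariance error being exponentially small in $T_{rel}$ because of the relaxation stage --- use \textbf{H1} so the perturbed points stay in $\mathcal{M}$, \textbf{H2} so the candidate inner dynamics is invertible, \textbf{H3} for the spectral gap, and conclude via the \emph{a posteriori}/graph-transform results of \cite{BatesLuZeng2008,NippS92} in the phase--amplitude coordinates, reading off the PRC from the exact identity $\Theta\circ F_A\circ K_0-f_0=PRC$. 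The only real deviation is how the factor $A$ in the error is obtained: the paper treats small $A$ separately (Fenichel for the map $\tilde F_A$ in the $(\theta,\sigma)$ variables, followed by a bootstrap on $S_A$), whereas you extract it uniformly from the refinement that the constant in \eqref{eq:nhimIsochron} can be taken proportional to $d(y,\Gamma_0)=\ocal(A)$; this is the same fibre-contraction fact the paper uses and is at the same level of rigor, and it is precisely the delicate point you already flag.
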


\begin{figure}[H]
	\centering
	{\includegraphics[width=105mm]{./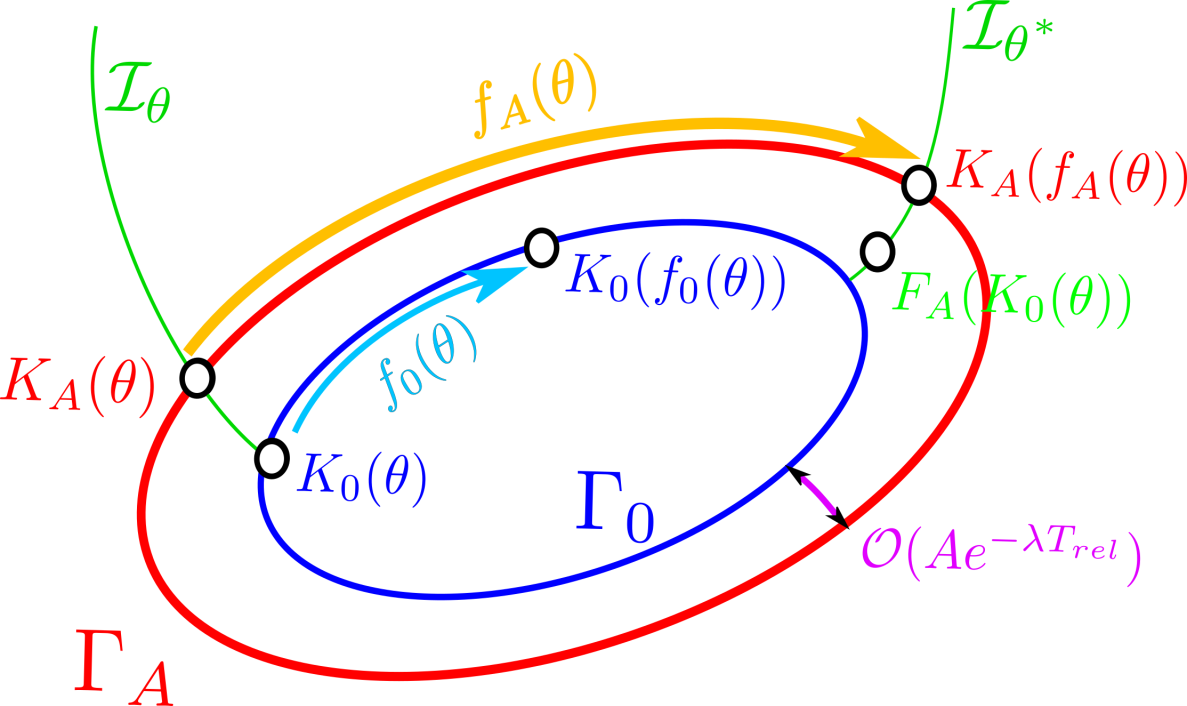}}
	\caption{Sketch of the results of Theorem \ref{thm:theorem1}. The perturbation acting on a point $K_0(\theta) \in \Gamma_0$ for a time $T' = T_{pert} + T_{rel}$ displaces it to a point $F_A(K_0(\theta))$. In Theorem \ref{thm:theorem1} we show that the phase difference between the perturbed and unperturbed trajectories is given up to an error $\mathcal{O}(A e^{-\lambda T_{rel}})$ by the difference between the internal dynamics $f_A(\theta)$ on $\Gamma_A$ and $f_0(\theta)$ on $\Gamma_0$. For the sake of clarity we have located the points $K_0(\theta)$ and $K_A(\theta)$ (resp. $F_A(K_0(\theta))$ and $K_A(f_A(\theta))$) on the same isochron $\mathcal{I}_{\theta}$ (resp. $\mathcal{I}_{\theta^*}$, where  $\theta^*:=f_A(\theta)$) although they are $\mathcal{O}(A e^{-\lambda T_{rel}})$-close.} \label{fig:methodSketch}
\end{figure}

\subsection{Proof of Theorem~\ref{thm:theorem1}}\label{sec:proofs}
\subsubsection{The case $A$ small}

In this Section we prove Theorem~\ref{thm:theorem1} for $A$ small. We first prove the
following lemma which shows that the map $F_A$ has an invariant curve $\Gamma_A$ which is
$\mathcal{O}(Ae^{-\lambda T_{rel}})$-close to $\Gamma_0$.

\begin{lemma}\label{thm:lemma2}
Consider the stroboscopic map of the $T'$-periodic system \eqref{eq:equacion1} defined in \eqref{eq:poincareMap}
and let $\Gamma_0$ be the normally hyperbolic invariant curve of the map $F_0$,
parameterized by $K_0$ (see \eqref{eq:paramK0}).
Then, for $A$ small enough there exists an invariant curve $\Gamma_A$
of the map $F_A$. 
Moreover, there exist a parameterization $K_A:\mathbb{T} \rightarrow \mathbb{R}^n$
and a periodic function $f_A: \mathbb{T} \rightarrow \mathbb{T}$ satisfying the invariance equation
\[
F_A(K_A(\theta))=K_A(f_A(\theta)),
\]
such that $K_A(\theta)$ satisfies
\begin{equation}\label{kamenysk0}
|K_A(\theta) - K_0(\theta)| = \mathcal{O}(A e^{-\lambda T_{rel}}),
\end{equation}
where
$-\lambda<0$ is the maximal Lyapunov exponent of $\Gamma_0$.
\end{lemma}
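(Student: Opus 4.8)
The plan is to pass to the phase--amplitude coordinates associated with $\Gamma_0$, in which $F_0$ becomes a skew product over the rigid rotation $f_0$ with transverse part contracting at rate $e^{-\lambda T'}<1$, and then to produce $\Gamma_A$ as an invariant graph via a graph--transform fixed point in the spirit of \cite{NippS92,NippS2013} (equivalently, by quoting the \emph{a posteriori} persistence theorems of \cite{BatesLuZeng2008,haroDeLaLlave}). Concretely, I would first introduce coordinates $(\theta,\sigma)\in\mathbb{T}\times\mathbb{R}^{n-1}$ on $\mathcal{M}$, with $\theta=\Theta(x)$ the asymptotic phase and $\sigma$ a transverse amplitude variable, normalized so that $\Gamma_0=\{\sigma=0\}$, $K_0(\theta)\leftrightarrow(\theta,0)$, and $|\sigma(x)|\asymp d(x,\Gamma_0)$. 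Because isochrons are mapped to isochrons with the phase advanced by $t/T$, in these coordinates $\phi_0(t;\cdot)$ has the exact skew form $(\theta,\sigma)\mapsto(\theta+t/T,\;R_0(t,\theta,\sigma))$ with $R_0(t,\theta,0)=0$ and $\|R_0(t,\theta,\sigma)\|\le Ce^{-\lambda t}\|\sigma\|$ for $t\ge0$; in particular $F_0(\theta,\sigma)=(\theta+T'/T,\;R_0(T',\theta,\sigma))$ is a skew product over $f_0$ with transverse Lipschitz constant $\le e^{-\lambda T'}<1$, which is just the normal hyperbolicity already recorded in \eqref{eq:nhim1}--\eqref{eq:nhim2}.

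The decisive point is that $\bar p(t;A)=0$ for $t>T_{pert}$, so $F_A=\phi_0(T_{rel};\cdot)\circ\phi_A(T_{pert};\cdot)$. By Gronwall and smooth dependence on the parameter $A$ (note that $Ap(t;A)$ does not depend on $x$), $\phi_A(T_{pert};\cdot)=\phi_0(T_{pert};\cdot)+\mathcal{O}(A)$ in the $C^1$ sense on a neighbourhood of $\Gamma_0$; hence $y_A:=\phi_A(T_{pert};K_0(\theta))$ lies $\mathcal{O}(A)$-close to $\phi_0(T_{pert};K_0(\theta))\in\Gamma_0$, i.e.\ its amplitude coordinate is $\mathcal{O}(A)$. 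Applying $\phi_0(T_{rel};\cdot)$ and using the transverse bound above, the amplitude coordinate of $F_A(K_0(\theta))$ is $\mathcal{O}(Ae^{-\lambda T_{rel}})$, whereas its phase coordinate equals $\theta+T'/T+\mathcal{O}(A)$ --- in fact, by the very definition of the PRC, it equals $\theta+T'/T+PRC(\theta,A)+\mathcal{O}(Ae^{-\lambda T_{rel}})$, which is what ultimately yields the PRC formula in Theorem~\ref{thm:theorem1}. Thus, in $(\theta,\sigma)$ coordinates, $F_A$ is a $C^1$, $\mathcal{O}(A)$ perturbation of the skew product $F_0$, but one that displaces the zero section $\{\sigma=0\}$ only $\mathcal{O}(Ae^{-\lambda T_{rel}})$ in the transverse direction.

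With these estimates in hand the rest is routine: I would run the standard graph transform. On the Banach space of continuous (resp.\ Lipschitz, resp.\ $C^r$) sections $u:\mathbb{T}\to\mathbb{R}^{n-1}$ with $\|u\|\le\rho$, the image $F_A(\mathrm{graph}\,u)$ is again a graph over $\mathbb{T}$ --- here one uses that for small $A$ the phase component $\theta\mapsto\theta+T'/T+\mathcal{O}(A)$ is an orientation-preserving circle diffeomorphism --- so there is a well-defined graph transform $u\mapsto\mathcal{T}_A u$. Choosing $\rho=\hat C\,Ae^{-\lambda T_{rel}}$ with $\hat C$ large, the transverse displacement estimate makes $\mathcal{T}_A$ map this ball into itself, and the transverse contraction factor $e^{-\lambda T'}+\mathcal{O}(A)<1$ (which dominates the $1+\mathcal{O}(A)$ distortion coming from the reparameterization of the base circle) makes $\mathcal{T}_A$ a contraction. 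Its unique fixed point $u_A$ has $\|u_A\|=\mathcal{O}(Ae^{-\lambda T_{rel}})$, $\Gamma_A:=\mathrm{graph}\,u_A$ is the claimed invariant curve, and defining $K_A(\theta)$ as the point with coordinates $(\theta,u_A(\theta))$ gives $|K_A(\theta)-K_0(\theta)|=\mathcal{O}(\|u_A\|)=\mathcal{O}(Ae^{-\lambda T_{rel}})$, which is \eqref{kamenysk0}. Finally, since $F_A(K_A(\theta))\in\Gamma_A$, setting $f_A(\theta)$ equal to the phase coordinate of $F_A(K_A(\theta))$ yields $F_A(K_A(\theta))=K_A(f_A(\theta))$, with $f_A$ a circle diffeomorphism $\mathcal{O}(A)$-close to $f_0$; and $K_A\in C^r$ by the theory in \cite{NippS92,NippS2013}, whose bunching conditions hold trivially here since $\eta=1$ and $\lambda_+=e^{-\lambda T'}<1$. (Alternatively, one may skip the explicit graph transform: $(K_0,\tilde f_A)$ with $\tilde f_A(\theta):=\theta+T'/T+PRC(\theta,A)$ is an approximate solution of the invariance equation with error $\mathcal{O}(Ae^{-\lambda T_{rel}})$, the nondegeneracy hypotheses of \cite{BatesLuZeng2008,haroDeLaLlave} hold uniformly for $A$ small, and the \emph{a posteriori} theorem then supplies a genuine solution $\mathcal{O}(Ae^{-\lambda T_{rel}})$-close.)

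The hardest part will be the quantitative closeness, not the existence: $F_A$ is globally only $\mathcal{O}(A)$-close to $F_0$, so a black-box application of NHIM persistence would merely give $|K_A-K_0|=\mathcal{O}(A)$. Extracting the sharper rate $\mathcal{O}(Ae^{-\lambda T_{rel}})$ forces one to exploit the structure $F_A=\phi_0(T_{rel};\cdot)\circ\phi_A(T_{pert};\cdot)$: the $\mathcal{O}(A)$ discrepancy produced during $[0,T_{pert}]$ is subsequently subjected to the long, purely unperturbed relaxation $\phi_0(T_{rel};\cdot)$, whose transverse contraction by $e^{-\lambda T_{rel}}$ is exactly what damps it, while the residual $\mathcal{O}(A)$ error survives only in the phase direction and is a harmless reparameterization of $\Gamma_A$. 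Making this precise --- that the transverse component of $F_A(K_0(\theta))$ is $\mathcal{O}(Ae^{-\lambda T_{rel}})$ --- is the heart of the lemma; everything else is a standard fixed-point estimate.
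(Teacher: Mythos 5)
Your proposal is correct, and its core coincides with the paper's: you pass to the phase--amplitude coordinates \eqref{eq:paramK}, and you isolate exactly the estimate the paper's proof hinges on, namely that because $F_A=\phi_0(T_{rel};\cdot)\circ\phi_A(T_{pert};\cdot)$, the $\mathcal{O}(A)$ transverse displacement created on $[0,T_{pert}]$ is then damped by the unperturbed relaxation, so the $\sigma$-component of the map picks up only an $\mathcal{O}(Ae^{-\lambda T_{rel}})$ term while the $\mathcal{O}(A)$ error survives only in the phase direction (this is precisely \eqref{eq:Fat}--\eqref{eq:strobts}). Where you genuinely differ is in how existence and the sharp bound are packaged. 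The paper first invokes Fenichel's theorem as a black box to get the invariant curve as a graph $\sigma=S_A(\theta)$ with only the crude bound $S_A=\mathcal{O}(A)$, and then bootstraps: plugging the graph into the invariance relation $S_A(\theta+T'/T+\mathcal{O}(A))=S_A(\theta)e^{-\lambda T'}+\mathcal{O}(Ae^{-\lambda T_{rel}})$ immediately upgrades the bound to $S_A=\mathcal{O}(Ae^{-\lambda T_{rel}})$, which then gives \eqref{kamenysk0} after returning to the original variables. You instead construct the curve directly by a graph transform (or the a posteriori theorems of \cite{NippS92,NippS2013,BatesLuZeng2008}) restricted to the ball of sections of radius $\mathcal{O}(Ae^{-\lambda T_{rel}})$, obtaining existence and the sharp estimate in one stroke. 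Your route is more self-contained and quantitative (no appeal to Fenichel, and it anticipates the $A=\mathcal{O}(1)$ argument of Section~\ref{sec:proofsA}), at the cost of having to verify the self-mapping and contraction estimates for the graph transform explicitly; the paper's route is shorter because existence is outsourced and the sharp rate follows from a one-line bootstrap of the invariance equation. Two cosmetic remarks: in the asymptotic-phase coordinate the phase of $F_A(K_0(\theta))$ is exactly $\theta+T'/T+PRC(\theta,A)$, so the extra $\mathcal{O}(Ae^{-\lambda T_{rel}})$ you attach there is unnecessary (though harmless, and irrelevant for the lemma); and your setting $\sigma\in\mathbb{R}^{n-1}$ tacitly uses the higher-dimensional phase--amplitude coordinates, which the paper defers to the references and to Remark~\ref{rem:gen_n}, where the transverse contraction is stated through the maximal Lyapunov exponent rather than an exact linear rate.
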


\begin{proof}
When $n=2$, since, by \eqref{eq:nhim1} and \eqref{eq:nhim2}, $\Gamma_0$ is a normally hyperbolic attracting invariant manifold  of $F_0$,
the existence of the invariant curve for $A$ small enough  follows from Fenichel's Theorem \cite{Fenichel71, Fenichel74}.
We will perform the rest of the proof for $n=2$ but it can be easily generalized to arbitrary $n$ (see Remark~\ref{rem:gen_n}). Using the results in \cite{Guillamon2009} (see \cite{Cabre2005,castelli2015} for higher dimensions) we can describe a point $(x,y) \in \mathcal{M}$ in terms of the so called
phase-amplitude variables. More precisely, consider the change of coordinates 

\begin{equation}
\begin{aligned}
K: \Omega \subset \mathbb{T} \times \mathbb{R} & \rightarrow \mathcal{M} \subset \mathbb{R}^2\\
(\theta, \sigma) & \rightarrow K(\theta, \sigma)=(x,y),
\end{aligned}\label{eq:paramK}
\end{equation}
where $\Omega:=\mathbb{T} \times U$ and $U \subset \mathbb{R}$, such that system~\eqref{eq:equacion1} for $A=0$,
expressed in the variables $(\theta,\sigma)$, has the following form
\begin{equation}
\begin{aligned}
\dot{\theta} & = \frac{1}{T},\\
\dot{\sigma} & = -\lambda\sigma.
\end{aligned}\label{eq:variables_prc}
\end{equation}

Moreover, system~\eqref{eq:equacion1} for $A \neq 0$ small enough, expressed in the variables 
$(\theta,\sigma)$, writes as the $T'$-periodic system
\begin{equation}
\begin{aligned}
\dot{\theta} & = \frac{1}{T} + \mathcal{O}(A),\\
\dot{\sigma} & = - \lambda \sigma + \mathcal{O}(A),
\end{aligned}\label{eq:variables_prc_A}
\end{equation}
and we will denote by $\Psi_A(t;t_0,\theta,\sigma)$ the general solution of \eqref{eq:variables_prc_A}.

Consider now the stroboscopic map $F_{A}$ in the variables $(\theta,\sigma)$, i.e.
$\Ft_A: \Omega \rightarrow \Omega$, such that $\Ft_A=K^{-1} \circ F_A \circ K$.
We have

\begin{equation}\label{eq:Fat}
\Ft_A(\theta,\sigma)=\Psi_A(T'; 0, \theta, \sigma)= \left ( \theta_{pert} + \frac{T_{rel}}{T}, \sigma_{pert}e^{- \lambda T_{rel}} \right ),
\end{equation}
where $K(\theta_{pert},\sigma_{pert})=\phi_A(T_{pert};K(\theta,\sigma))\in \M$. For $A$ small enough we have
\[
(\theta_{pert},\sigma_{pert})=\Psi_A(T_{pert}; 0, \theta, \sigma)
= \left ( \theta + \frac{T_{pert}}{T} + \mathcal{O}(A), \sigma e^{- \lambda T_{pert}} + \mathcal{O}(A) \right ).
\]
In conclusion,
\begin{equation}\label{eq:strobts}
\Ft_A(\theta,\sigma)= \left ( \theta + \frac{T'}{T} + \mathcal{O}(A), \sigma e^{- \lambda T'} + \mathcal{O}(Ae^{-\lambda T_{rel}}) \right)
=\Ft_0 + \mathcal{O}(A).
\end{equation}

The unperturbed invariant curve $\Gamma_0$ in the variables $(\theta, \sigma)$ is given by
\[
\tilde{\Gamma}_0=\{(\theta,\sigma) \, | \, \theta \in \mathbb{T}, \sigma=0\}.
\]
Therefore, by Fenichel's Theorem, for $A \neq 0$ small enough, there exists a function
\begin{equation}
\begin{aligned}
S_A: \mathbb{T} & \rightarrow \mathbb{R}\\
\theta & \rightarrow S_A(\theta),
\end{aligned}\label{eq:functionS_A}
\end{equation}
such that $S_A(\theta)=\mathcal{O}(A)$ and the perturbed invariant curve for $\Ft_A$ is given by
\begin{equation}\label{eq:gammatilde_A}
\tilde{\Gamma}_A = \{(\theta, \sigma) \, | \, \theta \in \mathbb{T}, \sigma=S_A(\theta) \}.
\end{equation}
Analogously, $\tilde{K}_A(\theta)=(\theta,S_A(\theta))$ is a parameterization of the invariant curve $\tilde{\Gamma}_A$.
Hence, using the invariance property, we have
\[
\begin{array}{rcl}
\Ft_A(\Kt_{A}(\theta)) & = &\Ft_A(\theta,S_{A}(\theta)) \\
&=& \left( \Ft_A^1(\theta,S_{A}(\theta)), \Ft_A^2(\theta,S_{A}(\theta)) \right) \\
&=& \left(\Ft_A^1(\theta,S_{A}(\theta)), S_{A}\left(\Ft_A^1(\theta,S_{A}(\theta))\right) \right) \\
&=&\Kt_{A}(\Ft_A^1(\theta,S_{A}(\theta))) \\
&:=&\Kt_{A}(f_{A}(\theta)),
\end{array}
\]
where the internal dynamics is given by
\[
f_A(\theta)=\Ft_A^1(\theta,S_{A}(\theta)),
\]
and $\Ft^1_A$ and $\Ft^2_A$ correspond to  the $\theta$ and $\sigma$ component of $\Ft_A$, respectively.

Using the invariance property of $\tilde {\Gamma}_A$ and  the expression~\eqref{eq:strobts} for the stroboscopic map $\Ft_A$, we obtain:
\[
S_A \left ( \theta + \frac{T'}{T} + \mathcal{O}(A) \right ) = S_A(\theta)e^{- \lambda T'} + \mathcal{O}(A e^{- \lambda T_{rel}}), \quad \forall \theta
\in \mathbb{T}.
\]
Therefore, since $S_A(\theta)=\mathcal{O}(A)$ and $T'=T_{pert} + T_{rel}$ we get an improved bound for $S_A(\theta)$
\[
S_A(\theta)=\mathcal{O}(Ae^{- \lambda T_{rel}}).
\]
Using the change of variables $(x,y) = K(\theta, \sigma)$ given 
in \eqref{eq:paramK} we can return to the original variables. Then, assuming $(x,y) \in \Gamma_{A}\subset \mathcal{M}$, if $A$ is small enough, one has
\begin{equation}\label{eq:paramoriginal}
(x,y)=K(\theta,\sigma)=K(\theta,S_{A}(\theta))=K \circ \Kt_{A}(\theta)=:K_{A}(\theta).
\end{equation}
Thus, the invariant curve can be parameterized by $K_{A}$ and
\[
F_A (K_{A} (\theta))= F_A \circ K \circ \Kt_{A} (\theta) = K \circ \Ft_A \circ \Kt_{A} (\theta) = K \circ \Kt_{A} (f_{A}(\theta)) = K_{A} (f_{A}(\theta)),
\]
that is, the internal dynamics over the invariant curve $\Gamma_A$ is the same for both parameterizations.
Therefore,
\begin{equation}\label{eq:kamenysk0}
\begin{split}
|K_A (\theta) - K_0 (\theta)| &=  |K \circ \tilde{K}_A(\theta) - K \circ \tilde{K}_0(\theta)| \\
&\leq \sup_{(\theta,\sigma) \in \bar{\Omega}}|DK(\theta,\sigma)| |\tilde{K}_A(\theta) - \tilde{K}_0(\theta)| \\
& \leq \bar C |S_A(\theta)| \leq C Ae^{-\lambda T_{rel}},
\end{split}
\end{equation}
where $C$ is a constant independent of $T_{rel}$ and $A$.
\end{proof}

\begin{remark}\label{rem:gen_n}
Notice that the proof can be generalized to any $n>2$, just considering $\sigma=(\sigma_1,\ldots,\sigma_{n-1}) \in \mathbb{R}^{n-1}$ and
\[
\dot{\sigma}= M \sigma,
\]
where $M$ is the real canonical form of the projection onto the stable subspace of the monodromy matrix of the first variational equation along the periodic orbit:
\[
 \dot x=DX(\gamma_0(t))x.
\]

The proof can be  derived analogously using that $\sigma(t)=\sigma(0)e^{Mt}$ and
$|\sigma_0 e^{Mt}| < |\sigma_0| e^ {-\lambda t}$, where $-\lambda <0$ is the maximal Lyapunov exponent of $\Gamma_0$.
\end{remark}

\noindent \textbf{End of the proof of Theorem~\ref{thm:theorem1} for $A$ small}\\
To finish the proof of  Theorem~\ref{thm:theorem1} we need to show that the internal dynamics
$f_A$ in $\Gamma_A$ is close to the PRC of $\Gamma_0$ of system \eqref{eq:equacion0}.

Consider the parameterization $K_A$ of the invariant curve $\Gamma_A$ given in Lemma~\ref{thm:lemma2}, we have
\begin{equation*}
K_A(f_A(\theta)) = F_A(K_A (\theta)) = F_A(K_0 (\theta)) + F_A (K_A (\theta)) - F_A(K_0 (\theta)). \\
%&=& F_A (K_0 (\theta)) + sup_{x\in M} |DF_A|(K_0 (\theta))(K_A (\theta) - K_0 (\theta)) + \mathcal{O} (K_A (\theta) - K_0 (\theta))^2
\end{equation*}

Assuming that $\sup\limits_{x \in \bar{\mathcal{M}}}$ $|DF_A| \leq C$ and using that 
$|K_A(\theta) - K_0(\theta)| = \mathcal{O}(A e^{-\lambda T_{rel}})$ (see Lemma~\ref{thm:lemma2}),
we have
\begin{equation}
F_A(K_0(\theta)) = K_A(f_A(\theta)) + \mathcal{O} (Ae^{-\lambda T_{rel}}).
\end{equation}
Moreover, using the formula for the PRC given in \eqref{eq:prcStrDefinition}, we have
\begin{equation}
\begin{array}{rcl}
PRC(\theta, A) &=& \Theta[F_A(K_0(\theta))] - \Theta[F_0(K_0(\theta))] \\
&=& \Theta[K_A(f_A(\theta)) + \mathcal{O} (Ae^{-\lambda T_{rel}})] - \Theta[K_0(f_0(\theta))] \\
&=& \Theta[K_A(f_A(\theta))] - \Theta[K_0(f_0(\theta))] \\
&& + \enskip \Theta [K_A(f_A(\theta)) + \mathcal{O} (Ae^{-\lambda T_{rel}})] - \Theta[K_A(f_A(\theta))] \\
&=& f_A(\theta) - f_0(\theta) + \Theta[K_A(f_A(\theta)) + \mathcal{O} (Ae^{-\lambda T_{rel}})] - \Theta[K_A(f_A(\theta))].
\end{array}
\end{equation}
Now using that $\sup\limits_{x \in \bar{\mathcal{M}}} |\nabla \Theta| \leq C$ 
we have
\[PRC(\theta, A)=f_A(\theta) - f_0(\theta) + \mathcal{O}(Ae^{-\lambda T_{rel}}).\]

\subsubsection{The case $A=\mathcal{O}(1)$}\label{sec:proofsA}

To prove Theorem~\ref{thm:theorem1} for $A=\mathcal{O}(1)$ one can use the results in \cite{BatesLuZeng2008}, which state that
if a map has an approximately invariant manifold which is approximately normally hyperbolic, then the map has a true
invariant manifold nearby.

Due to the strong attracting properties of the invariant curve $\Gamma_0$, it is straightforward to see that  $\Gamma_0$ 
%of the map $F_0$ 
is approximately invariant for the map $F_A$, even if $A=\mathcal{O}(1)$.

Consider the intermediate map 
\begin{equation}\label{eq:Fpert}
F_{pert}(x)=\phi_A(T_{pert};x),
\end{equation}
we will use the hypothesis \textbf{H1} that states that $F_{pert}$ maps the curve  $\Gamma_0$ into its basin of 
attraction $\mathcal{M}$. \\
Then, given a point $x=K_0(\theta)\in \Gamma_0$, if $x_{pert} =F_{pert}(x)=\phi_A(T_{pert};x) \in \mathcal{M}$, (see \eqref{eq:xpertthetapert}),
by equation \eqref{eq:nhimIsochron}, there exists a point $K_0 (\theta_{pert}) \in \Gamma_0$ such that, for $t\ge 0$ 
\begin{equation}\label{eq:equacion2}
\left | F_A(K_0(\theta))-K_0 \left ( \theta_{pert}+\frac{T_{rel}}{T} \right ) \right|= \left |\phi_0(T_{rel}; x_{pert}) - \phi_0 \left(T_{rel}; K_0(\theta_{pert}) \right) \right|
\leq C e^{-\lambda T_{rel}}.
\end{equation}
Using the formula for the PRC given in \eqref{eq:prc1} we have that
\begin{equation}\label{algo}
 %K_0 \left ( 
\theta_{pert} + \frac{T_{rel}}{T} 
%\right )=K_0\left(
=PRC(\theta,A) + \theta +\frac{T_{pert}}{T} + \frac{T_{rel}}{T}
%\right)=K_0\left( 
=PRC(\theta,A) + f_0(\theta) ,
%\right),
\end{equation}
where $f_0(\theta)=\theta+T'/T$.
Hence, defining
\begin{equation}\label{eq:fbara}
\bar{f}_A(\theta):=PRC(\theta,A) + f_0(\theta),
\end{equation}
%we obtain
expression \eqref{eq:equacion2} reads as
\begin{equation}\label{eq:asimptotics}
|F_A(K_0(\theta)) - K_0(\bar{f}_A(\theta))| \leq C e^{-\lambda T_{rel}}.
\end{equation}
In other words, the curve $\Gamma_0$ with inner dynamics $\bar {f}_A$ is approximately invariant for the map $F_A$
with an error $\mathcal{O}(e^{-\lambda T_{rel}})$ that can be made as small as we want taking $T_{rel}$ large enough. To apply the results in \cite{BatesLuZeng2008} one needs to show that $\Gamma_0$ is approximately normally hyperbolic for $F_A$. That is, for each point $x \in \Gamma_0$ there exists a decomposition $\Gamma_{0,x}=\Gamma_{0,x}^c \oplus \Gamma_{0,x}^s$,
with $\Gamma_{0,x}^c$ being an approximation of the tangent space to $\Gamma_0$ at $x$, such that
\begin{itemize} 
\item   
The splitting is approximately invariant under the linearised map $DF_A$, 
\item 
$DF_A(x)|_{\Gamma_0^s}$ expands and does so to a greater rate
than $DF_A(x)|_{\Gamma_0^c}$ does. 
\end{itemize}
Again, we will consider the $2$-dimensional case, but results can be generalized to arbitrary dimension (see Remark~\ref{rem:gen_n}).
Using the change of variables $K$ introduced in \eqref{eq:paramK} the map $F_A$ satisfies (see equation~\eqref{eq:Fat}): 
\begin{equation}\label{eq:kathetasigma}
F_A(K(\theta,\sigma))=K \left ( \theta_{pert} + \frac{T_{rel}}{T}, \sigma_{pert}e^{- \lambda T_{rel}} \right ),
\end{equation}
where $K(\theta_{pert}, \sigma_{pert})=F_{pert}(K(\theta,\sigma))$ (see \eqref{eq:Fpert}). 
Notice that  $\theta_{pert}$ and $\sigma_{pert}$ are correctly defined as long as 
$F_{pert}(K(\theta,\sigma)) \in \mathcal{M}$, which is satisfied for points $(\theta,0)$ on the invariant curve $\Gamma_0$
by hypothesis \textbf{H1}, and therefore in a small neighbourhood of $\Gamma_0$. 
Taking derivatives with respect to $\theta$ and $\sigma$ in expression~\eqref{eq:kathetasigma}, we have 
\begin{equation}\label{eq:derivadeskthetasigma}
\begin{array}{rcl}
DF_A(K(\theta,\sigma))D_{\theta}K(\theta,\sigma) &= &
D_{\theta}K(\theta_{pert} + \frac{T_{rel}}{T}, \sigma_{pert}e^{- \lambda T_{rel}}) \frac{\partial \theta_{pert}}{\partial \theta}  \\
&  & + D_{\sigma}K(\theta_{pert} + \frac{T_{rel}}{T}, \sigma_{pert}e^{- \lambda T_{rel}}) e^{- \lambda T_{rel}} \frac{\partial \sigma_{pert}}{\partial \theta},  \\
DF_A(K(\theta,\sigma))D_{\sigma}K(\theta,\sigma) &= &
D_{\theta}K(\theta_{pert} + \frac{T_{rel}}{T}, \sigma_{pert}e^{- \lambda T_{rel}}) \frac{\partial \theta_{pert}}{\partial \sigma}  \\
 & &+ D_{\sigma}K(\theta_{pert} + \frac{T_{rel}}{T}, \sigma_{pert}e^{- \lambda T_{rel}}) e^{- \lambda T_{rel}} \frac{\partial \sigma_{pert}}{\partial \sigma}.  \\
\end{array}
\end{equation}
Evaluating the above expression on the points $(\theta,0)$, we have 
\begin{equation}
\begin{array}{rcl}
DF_A(K_0(\theta))D K_0(\theta) &= &
D_{\theta}K(\bar f_A(\theta), \sigma_{pert}(\theta,0)e^{- \lambda T_{rel}}) \frac{\partial \theta_{pert}}{\partial \theta} (\theta,0)\\
&  & + D_{\sigma}K(\bar f_A(\theta), \sigma_{pert}(\theta,0)e^{- \lambda T_{rel}}) e^{- \lambda T_{rel}} \frac{\partial \sigma_{pert}}{\partial \theta}  (\theta,0),\\
DF_A(K_0(\theta))K_1(\theta) &= & D_{\theta}K(\bar f_A(\theta), \sigma_{pert}(\theta,0)e^{- \lambda T_{rel}}) \frac{\partial \theta_{pert}}{\partial \sigma} (\theta,0) \\
& &+ D_{\sigma}K(\bar f_A(\theta), \sigma_{pert}(\theta,0)e^{- \lambda T_{rel}}) e^{- \lambda T_{rel}} \frac{\partial \sigma_{pert}}{\partial \sigma} (\theta,0), \\
\end{array}
\end{equation}
where $\bar{f}_A$ is defined in \eqref{eq:fbara} (see also \eqref{algo}) and
\begin{equation}\label{eq:k1}
K_1(\theta):=D_{\sigma} K(\theta,\sigma)|_{\sigma=0}. 
\end{equation}
Now, we Taylor expand the function $K(\theta,\sigma)$ around $\sigma=0$, and obtain
\begin{equation}
\begin{array}{rcl}
DF_A(K_0(\theta))DK_0(\theta) &= & \left[ DK_0(\bar{f}_A(\theta))+ e^{-\lambda T_{rel}}\sigma_{pert}(\theta,0)DK_1(\bar{f}_A(\theta))\right]
\frac{\partial \theta_{pert}}{\partial \theta}   (\theta,0)\\
&&+ K_1(\bar{f}_A(\theta)) e^{-\lambda T_{rel}}  \frac{\partial \sigma_{pert}}{\partial \theta} (\theta,0) +\mathcal{O}(e^{-2\lambda T_{rel}}), \\
DF_A(K_0(\theta))K_1(\theta) &= & \left[ DK_0(\bar{f}_A(\theta))+ e^{-\lambda T_{rel}} \sigma_{pert}(\theta,0)DK_1(\bar{f}_A(\theta))\right]
\frac{\partial \theta_{pert}}{\partial \sigma} (\theta,0) \\
&&+ K_1(\bar{f}_A(\theta)) e^{- \lambda T_{rel}}  \frac{\partial \sigma_{pert}}{\partial \sigma}(\theta,0)  +\mathcal{O}(e^{-2\lambda T_{rel}}).\\
\end{array}
\end{equation}
Moreover, as the functions $\theta_{pert}(\theta, \sigma)$ and $\sigma_{pert}(\theta, \sigma)$ are smooth functions
at the points $(\theta,0)$ we can ensure that the error terms are uniform with respect to $\theta \in \mathbb{T}$.
Let us now define
\[
Z(\theta)= \frac{\partial \theta_{pert}}{\partial \sigma} (\theta,0) DK_0(\theta) -\frac{\partial \theta_{pert}}{\partial \theta} (\theta,0) K_1(\theta),
\]
straightforward computations give
\[
DF_A(K_0(\theta))Z(\theta)= \mathcal{O}(e^{-{\lambda T_{rel}}}).
\]
Therefore, 
calling $\varepsilon=e^{-{\lambda T_{rel}}}$, we have
\begin{equation}\label{eq:equation43}
\begin{array}{rcl}
DF_A(K_0(\theta))DK_0(\theta)&=&\Lambda_T(\theta)DK_0(\bar {f}_A(\theta))+\mathcal{O}(\varepsilon),\\
DF_A(K_0(\theta))Z(\theta)&=&
\mathcal{O}(\varepsilon),
\end{array}
\end{equation}
with
\begin{equation}\label{eq:lambda_tn}
\begin{array}{rcl}
\Lambda_T(\theta)&=&\frac{\partial \theta_{pert}}{\partial \theta} (\theta,0),\\
\end{array}
\end{equation}
and as long as
\begin{equation}\label{eq:nhcond}
\frac{\partial \theta_{pert}}{\partial \theta} (\theta,0) \ne 0, \ \theta \in \mathbb{T},
\end{equation}
which is guaranteed by hypothesis \textbf{H2}, one can produce an iteration procedure to construct an approximate splitting by which 
$\Gamma_0$ becomes approximately Normally Hyperbolic (see Definition~\ref{def:nhim}). Then, we apply the results in \cite{BatesLuZeng2008}, which yield that $F_A$  will have an invariant curve $\Gamma_A$ near $\Gamma_0$.

A more direct argument consists in considering the map $F_A$ in the variables $(\theta,\sigma)$ in 
\eqref{eq:paramK}, denoted by $\tilde{F}_A$ in \eqref{eq:Fat}, and apply the results in \cite{NippS92} (see also \cite{NippS2013}) to this map.
Thanks to hypothesis \textbf{H1}, one can consider a neighbourhood of 
$\Gamma_0$ where the change of variables $(x,y)=K(\theta,\sigma)$ is  defined, and therefore the map $\tilde F_A$ is
a smooth diffeomorphism
$$
\tilde F_A : \mathcal{D}_\rho : =\mathbb{T}\times \mathcal{I}_\rho \to \mathbb{T}\times \mathbb{R},
$$
where $\mathcal{I}_\rho=\{\sigma \in \mathbb{R}, \ |\sigma|\le \rho \}$, with $\rho>0$ small, and has the form
$$
\tilde F_A(\theta,\sigma)=\left( \begin{array}{c}f_0(\theta)+\hat f(\theta,\sigma)\\g(\theta,\sigma)\end{array}\right),
$$                                   
where 
$$
f_0(\theta)=\theta_{pert}(\theta,0)+\frac{T_{rel}}{T},\
\hat f(\theta,\sigma)=\theta_{pert}(\theta,\sigma)-\theta_{pert}(\theta,0), \
g(\theta, \sigma)=\sigma_{pert}(\theta,\sigma)e^{-\lambda T_{rel}}.
$$
Hypothesis \textbf{H2} ensures that $f_0$ is a smooth diffeomorphism (and therefore invertible). Taking $T_{rel}$ large enough, the map $\tilde F_A$ strongly contracts in the $\sigma$
direction. Moreover, for $(\theta,\sigma)\in \mathcal{D}_\rho$, we have
$$
\biggl|\frac{\partial \hat f}{\partial \theta} \biggr| \le L_{11}, \
\biggl|\frac{\partial \hat f}{\partial \sigma} \biggr| \le L_{12},  \
\biggl| \frac{\partial g}{\partial \theta} \biggr| \le L_{21}, \
\biggl| \frac{\partial g}{\partial \sigma} \biggr| \le L_{22}, \
$$
where $L_{11}, L_{12} = \mathcal{O}(\rho)$, $L_{21},L_{22}= \mathcal{O}(e^{-\lambda T_{rel}})$ can be made small by taking $\rho$ small and $T_{rel}$ large.
One can then apply Theorem 3 in \cite{NippS92}, which give, for $T_{rel}$ large enough (hypothesis \textbf{H3}), the existence
of the invariant curve in the form \eqref{eq:gammatilde_A}, where the function $S_A$ must satisfy
$$
\tilde{F}_A^2(\theta,S_A(\theta))=S_A(\tilde{F}_A^1(\theta,S_A(\theta))),
$$
and $S_A=\mathcal{O}(e^{-\lambda T_{rel}})$. Again, $\Ft^1_A$ and $\Ft^2_A$ correspond to  the $\theta$ and $\sigma$ components of $\Ft_A$, respectively.

Returning to the original variables $x=K(\theta,\sigma)$ defined in \eqref{eq:paramK} and using that 
$\Gamma_{A}\subset \mathcal{M}$, we obtain the parameterization $K_A$ of $\Gamma_A$ as in \eqref{eq:paramoriginal}.
Moreover, once we have bounded the size of $S_A$, an analogous reasoning to \eqref{eq:kamenysk0},  gives
\begin{equation}\label{kamenysk0agran}
|K_A(\theta)-K_0(\theta)|\le C e^{- T_{rel}}.
\end{equation}

\noindent \textbf{End of the Proof of Theorem~\ref{thm:theorem1} for $A=\mathcal{O}(1)$}\\
To finish the proof of  Theorem~\ref{thm:theorem1} we need to show that the internal dynamics
$f_A$ in $\Gamma_A$ is close to the PRC of $\Gamma_0$ for system \eqref{eq:equacion0}.

This can be done analogously to the case $A$ small using \eqref{kamenysk0agran} instead of \eqref{kamenysk0} arriving to
\begin{equation}\label{eq:ffbar}
PRC(\theta, A)=f_A(\theta) - f_0(\theta) + \mathcal{O}(e^{-\lambda T_{rel}}).
\end{equation}
This step concludes the proof.

\section{Computation of the PRC by means of the parameterization method}\label{sec:num_method}

Theorem~\ref{thm:theorem1} establishes that the PRC can be obtained from the dynamics $f_A$ of the stroboscopic map
$F_A$ on the invariant curve $\Gamma_A$ . 
This allows us to take advantage of the existing algorithms based on the parameterization method \cite{Cabre2005, Haro2016}
to compute the parameterization of the invariant curve $K_A$ as well as its internal dynamics $f_A$. The algorithms are based on a Newton-like method to solve the invariance
equation \eqref{eq:invariance_teo} for the unknowns $K_A$ and $f_A$. Indeed, given an approximation of the parameterization of the invariant curve $K_A$ and its internal dynamics $f_A$, 
the method provides improved solutions that solve the invariance equation up to an error which is quadratic with respect to the initial one at each step. Moreover, 
the method requires to compute alongside the invariant normal bundle of the invariant curve $N(\theta)$ and its linearised dynamics $\Lambda(\theta)$. 
In order to make the paper self-contained, the algorithms are reviewed in detail in Appendix~\ref{sec:num_alg}.

\subsection{Computation of the PRC beyond the existence of the invariant curve}\label{sec:beyond}

The results of Theorem \ref{thm:theorem1}
rely on the computation of an invariant curve for the
stroboscopic map $F_A$ of a system with an `artificially' constructed periodic perturbation (see Eq. \eqref{eq:equacion1}).
In some cases, as we will see in the numerical examples presented in Section~\ref{sec:num_ex}, the invariant curve $\Gamma_A$ 
does not exist. 
This situation can happen if $F_{pert}(\Gamma_0)$ is not in the basin of attraction 
$\mathcal{M}$ of the limit cycle $\Gamma_0$ (breaking hypothesis \textbf{H1}), or if
$\theta_{pert}(\theta,0)$ has a critical point   $\theta^*$ and therefore $d \theta_{pert}/d\theta(\theta^*,0)=0$ (breaking hypothesis \textbf{H2}). 
However, when the hypothesis \textbf{H2} 
fails, it is possible to design an algorithm based on the parameterization method \cite{canadell2016newton}, to compute the PRC 
with enough accuracy
by means of solving an approximate invariance equation.

Using \eqref{eq:kathetasigma} with $\sigma=0$ we have
\[
F_A(K_0(\theta)) = K \left (\theta_{pert}(\theta,0)+ \frac{T_{rel}}{T},\sigma_{pert}(\theta,0)e^{-\lambda T_{rel}} \right )
=K(\bar{f}_A(\theta), \bar{C}_A(\theta)),
\]
where $\bar{f}_A(\theta)$ is given in \eqref{eq:fbara} and 
\begin{equation}\label{eq:Ca}
\bar{C}_A(\theta):=\sigma_{pert}(\theta,0)e^{-\lambda T_{rel}}.
\end{equation}
Taylor expanding $K(\theta,\sigma)$ with respect to $\sigma$ we obtain
\begin{equation}\label{eq:expandingK}
F_A(K_0(\theta)) = K_0 (\bar{f}_A(\theta)) +
\ocal (e^ {- \lambda T_{rel}}).
\end{equation}

Of course, expression \eqref{eq:expandingK} is only valid if $F_{pert}(\Gamma_0)\in \mathcal{M}$ (hypothesis \textbf{H1}), but we do not impose that 
$\Gamma_0$ is approximately normally hyperbolic. 
Nevertheless, we will use the ideas in the algorithms reviewed in Appendix~\ref{sec:num_alg} 
and we will design a quasi-Newton method to compute a function $g_A$ that satisfies 
\begin{equation}\label{eq:invariance_mod0}
F_A(K_0 (\theta)) - K_0 (g_A (\theta))=E(\theta),
\end{equation}
where the error $E$ can not be smaller than the terms $\ocal(e^{-\lambda T_{rel}})$ that we have dropped.

Assuming that $g_A$ satisfies equation \eqref{eq:invariance_mod0}, we look for an improved solution
$\hat{g}_A(\theta) = g_A(\theta) + \Delta g_A(\theta)$ such that $\hat{g}_A$ solves the approximate invariance
equation up to an error which is
quadratic in $E$. Thus, if we linearise about $g_A$ we have
\begin{equation}\label{eq:linearize0}
\begin{split}
F_A(K_0(\theta)) - K_0(\hat{g}_A(\theta)) & = F_A(K_0(\theta)) - K_0(g_A(\theta)) - DK_0(g_A(\theta))\Delta g_A(\theta) +  \ocal (|\Delta g_A|^2) \\
&= E(\theta) - DK_0(g_A(\theta)) \Delta g_A(\theta) +  \ocal (|\Delta g_A|^2). \\
\end{split}
\end{equation}
Therefore, we look for $\Delta g_A$ satisfying the equation
\[E(\theta) = DK_0(g_A(\theta)) \Delta g_A (\theta), \]
which provides
\begin{equation}\label{eq:deltaf0}
\Delta g_A (\theta) = \frac{<DK_0 (g_A(\theta)),E(\theta)>}{<DK_0 (g_A(\theta)), DK_0(g_A(\theta))>},
\end{equation}
where $<\cdot,\cdot>$ denotes the dot product.

\begin{remark}
Notice that expression~\eqref{eq:deltaf0} corresponds to the projection of $E$ onto the tangent direction $DK_0$,
thus obtaining $\Delta g_A$ in the same way as $\Delta f_A$ in Algorithm~\ref{alg:algorithm2} (see Appendix \ref{sec:num_alg}).
\end{remark}

The algorithm to compute the PRC is then:
\begin{algorithm}\label{alg:algorithm_new0}
\textbf{Computation of the PRC}.
Given a parameterization of the limit cycle  $K_0(\theta)$ and an approximate solution of equation 
\eqref{eq:invariance_mod0} $g_A(\theta)$, perform the following operations:
\begin{enumerate}
\item Compute $E(\theta) = F_A(K_0(\theta)) - K_0(g_A(\theta))$.
\item Compute $DK_0(g_A(\theta))$.
\item Compute $\Delta g_A = \frac{<DK_0 (g_A(\theta)),E(\theta)>}{<DK_0 (g_A(\theta)), DK_0(g_A(\theta))>}$.
\item Set $g_A(\theta) \leftarrow g_A(\theta)+\Delta g_A(\theta)$.
\item Repeat steps 1-4 until the error $E$ is smaller than the established tolerance. Then $PRC (\theta, A) = g_A(\theta) - \left(\theta + T'/T \right) $.
\end{enumerate}
\end{algorithm}

In Section~\ref{sec:num_ex}, we apply  Algorithm~\ref{alg:algorithm_new0} to several examples, illustrating
the convergence of the method and the good agreement of the results with the standard method.

\subsection{Computation of the PRC and ARC}\label{sec:sec42}

In the previous Section we have used that $K_0$ satisfies equation~\eqref{eq:expandingK}. Notice that we can be more precise 
and include the exact expression of the terms of $\ocal(e^{-\lambda T_{rel}})$, that is,
\[
F_A(K_0(\theta)) = K_0 (\bar{f}_A(\theta)) + K_1 (\bar{f}_A(\theta)) \bar{C}_A(\theta) + \ocal (e^ {-2 \lambda T_{rel}}),
\]
with $K_1(\theta)$ as in \eqref{eq:k1}.

We already know that the function $\bar{f}_A(\theta)$ provides the PRC through the relation \eqref{eq:fbara}. 
We would like to emphasize here the role of $\bar{C}_A(\theta)$ defined in \eqref{eq:Ca}. 
The analogous curve to the PRC for the amplitude $\sigma$ is known as the Amplitude 
Response Curve \cite{castejon2013phase, wilson2015extending} (ARC), and is given by $ARC (\theta)=\sigma_{pert}(\theta,0)$, that is, the value of 
$\sigma$ at $x_{pert}$ (see \eqref{eq:xpertthetapert}). 
Therefore, since $\sigma_{pert}(\theta,0)=\bar{C}_A(\theta)e^{\lambda T_{rel}}$, the function $\bar{C}_A(\theta)$ provides the ARC through the expression
$ARC(\theta, A)= \bar{C}_A(\theta)e^{\lambda T_{rel}}$.

As in the previous Section, it is possible to design a quasi-Newton method to compute the functions $g_A$ and $C_A$ 
that satisfy 
\begin{equation}\label{eq:invariance_mod}
F_A(K_0(\theta)) - K_0 (g_A(\theta)) - C_A(\theta) K_1 (g_A(\theta))=E(\theta),
\end{equation}
where the error $E$ will not be smaller than the terms of order $\ocal(e^{-2\lambda T_{rel}})$ that
we have dropped.

Proceeding as in the previous Section, we assume that $g_A$ and $C_A$ satisfy equation \eqref{eq:invariance_mod}
and we look for improved solutions,
$\hat{g}_A(\theta) = g_A(\theta) + \Delta g_A(\theta)$ and $\hat{C}_A(\theta) = C_A(\theta) +
\Delta C_A(\theta)$, such that $\hat{g}_A$ and
$\hat{C}_A$ solve the approximate invariance equation up to an error which is
quadratic in $E$. 
Thus, if we linearise about $g_A$ and $C_A$ we have
\begin{equation}
\begin{split}
F_A(K_0(\theta)) & - K_0(\hat{g}_A(\theta)) - K_1(\hat{g}_A(\theta)) \hat{C}_A(\theta) \\
= \, & F_A(K_0(\theta)) - K_0(g_A(\theta)) - DK_0(g_A(\theta))\Delta g_A(\theta) -  K_1(g_A (\theta)) C_A(\theta) \\
& - DK_1(g_A(\theta)) \Delta g_A (\theta) C_A(\theta) - K_1(g_A(\theta)) \Delta C_A(\theta) +
\ocal (\Delta^2, e^ {-2\lambda T_{rel}}) \\
= \, & E(\theta) - DK_0(g_A(\theta)) \Delta g_A(\theta) - DK_1(g_A(\theta)) \Delta g_A (\theta) C_A(\theta) \\
& - \enskip K_1(g_A(\theta)) \Delta C_A(\theta) + \ocal (\Delta^2). \\
\end{split}
\end{equation}
Hence, we are left with the following equation for $\Delta g_A$ and $\Delta C_A$,
\begin{equation}\label{eq:newton_full}
E(\theta) = [DK_0(g_A(\theta)) + DK_1 (g_A (\theta)) C_A(\theta)] \Delta g_A(\theta)  + K_1(g_A(\theta)) \Delta C_A(\theta).
\end{equation}

Therefore, the unknown $\Delta g_A$ corresponds to the projection of the error $E$ onto the direction 
$R:=DK_0 \circ g_A + C_A \cdot DK_1 \circ g_A$ and
$\Delta C_A(\theta)$ corresponds to the projection of $E$ onto the $K_1$ direction.
Of course, $DK_0$ and $K_1$ are transversal since
$K_1$ is tangent to the isochrons of the unperturbed limit cycle, which are always transversal to the limit cycle.
Since $C_A=\ocal(e^ {-\lambda T_{rel}})$, assuming
that $T_{rel}$ is large enough, we can always guarantee that $R$ and $K_1$ are transversal. Therefore,
multiplying \eqref{eq:newton_full} by $K_1^{\bot} (g_A(\theta))$, we have
\begin{equation}\label{eq:deltaf}
\Delta g_A(\theta) = \frac{ < K_1^{\bot}(g_A(\theta)), E(\theta) >}{<K_1^{\bot}(g_A(\theta)),R(\theta)>},
\end{equation}
whereas multiplying by $R(\theta)^{\bot}$, we obtain
\begin{equation}
\Delta C_A(\theta) = \frac{<R^{\bot}(\theta),E(\theta)>}{<R^{\bot}(\theta),K_1(g_A(\theta))>},
\end{equation}
where $<\cdot,\cdot>$ denotes the dot product.

\begin{remark}
Notice that $C_A(\theta)=\ocal(e^{-\lambda T_{rel}})$, and if we disregard 
the terms $\ocal(e^{-\lambda T_{rel}})$ in expression \eqref{eq:deltaf}, we obtain
\begin{equation}\label{eq:solve_deltafa}
\Delta g_A(\theta) = \frac{ < K_1^{\bot}(g_A(\theta)), E(\theta) >}{<K_1^{\bot}(g_A(\theta)),DK_0(g_A(\theta))>},
\end{equation}
which is equivalent to the expression obtained in \eqref{eq:deltaf0}. Indeed, 
in this case the vectors $E(\theta)$ and $DK_0(\theta)$ have the same direction, and
expression~\eqref{eq:deltaf0} can be replaced by 
\[
\Delta g_A (\theta) = \frac{<v,E(\theta)>}{<v, DK_0(g_A(\theta))>},
\]
where $v$ can be any vector as long as it is not perpendicular to $DK_0(g_A(\theta))$. 
\end{remark}

Thus, the algorithm to compute the PRC and the ARC is:
\begin{algorithm}\label{alg:algorithm_new}
\textbf{Computation of the PRC and the ARC}.
Given a parameterization of the limit cycle $K_0(\theta)$, the tangent vector to the isochrons of the 
limit cycle $K_1(\theta)$, and approximate solutions of equation 
\eqref{eq:invariance_mod} $g_A(\theta)$ and $C_A(\theta)$, perform the following operations:
\begin{enumerate}
\item Compute $E(\theta) = F_A(K_0(\theta)) - K_0(g_A(\theta)) - C_A(\theta) K_1 (g_A(\theta))$.
\item Compute $R(\theta)=DK_0(g_A(\theta)) + DK_1 (g_A (\theta)) C_A(\theta)$.
\item Compute $\Delta g_A(\theta) = \frac{ < K_1^{\bot}(g_A(\theta)), E(\theta) >}{<K_1^{\bot}(g_A(\theta)),R(\theta)>}$.
\item Compute $\Delta C_A(\theta) = \frac{<R^{\bot}(\theta),E(\theta)>}{<R^{\bot}(\theta),K_1(g_A(\theta))>}$.
\item Set $g_A(\theta) \leftarrow g_A(\theta)+\Delta g_A(\theta)$.
\item Set $C_A(\theta) \leftarrow C_A(\theta)+\Delta C_A(\theta)$.
\item Repeat steps 1-6 until the error $E$ is smaller than the established tolerance. Then, 
\[PRC(\theta,A)=g_A(\theta)-(\theta + T'/T)\],
and
\[ARC(\theta,A)=C_A(\theta)e^{\lambda T_{rel}}.\]
\end{enumerate}
\end{algorithm}

\section{Numerical Examples}\label{sec:num_ex}

In this Section we apply the algorithms based on the parameterization method introduced in 
Section~\ref{sec:num_method} to compute the PRC to some relevant models in neuroscience, namely the Wilson-Cowan model \cite{wilson1972} and the Morris-Lecar model \cite{morris1981}.
We will use the same perturbation for both models:
\[
p(t) = \sin ^6 \left ( \frac{\pi t}{T_{pert}} \right ),
\]
for $0 \leq t \leq T_{pert}$ and $T_{pert}=10$.
The value of $T_{rel}$ is different for each model and its value is indicated with the 
other parameters of the model in Tables \ref{table:WCprms} and \ref{tab:MLprms}, respectively.
 
In order to validate the algorithms we will compare the results obtained using the parameterization method with the \emph{standard method}
(see formula \eqref{eq:prc3}).

\textbf{The Wilson-Cowan model.}
The Wilson-Cowan model describes the behaviour of a coupled network of excitatory and inhibitory neurons.
The perturbed model has the form (see  \cite{wilson1972}):
\begin{equation}\label{eq:WCsys}
\begin{split}
 \dot{E} &= -E + S_e(aE - bI + P + Ap(t)),\\
 \dot{I} &= -I + S_i(cE - dI + Q),
\end{split}
\end{equation}
where the variables $E$ and $I$ are the firing rate activity of the excitatory and inhibitory populations, respectively, and
\begin{equation}
S_{k}(x) = \frac{1}{1+ e^{-a_k(x-\theta_k)}}, \quad \textrm{for } k=e,i,
\end{equation}
is the response function.

We consider two sets of parameters, for which the system displays a limit cycle.
For the first set of parameters the limit cycle is
born from a Hopf bifurcation, and for the second one from a saddle-node on invariant circle (SNIC) bifurcation \cite{borisyuk1992}.
We refer to them as WC-Hopf and WC-SNIC, respectively.
Some parameters of the model are common to both cases, namely $a=13$, $b=12$, $c=6$, $d=3$, $a_e=1.3$, $a_i=2$, $\theta_e = 4$, $\theta_i = 1.5$.
Parameters $(P,Q)$ for each set are given in Table~\ref{table:WCprms}, together with the period $T$, 
the characteristic exponent $-\lambda$ of each periodic orbit, and the relaxation time $T_{rel}$ of the perturbation.

We compute the PRC for the limit cycle of the Wilson-Cowan model for different values of the amplitude $A$.
In Figs. \ref{fig:prcsHopfCase0} and \ref{fig:prcsSnicCase} we show the comparison between the PRCs computed using 
the standard method and the parameterization method for the WC-Hopf and the WC-SNIC, respectively. We remark the good agreement between both methods.

We also show the invariant curve $\Gamma_A$, the 
internal dynamics $f_A$ and the derivative of $f_A$ in Fig.
\ref{fig:morePrcsHopfCase} for the WC-Hopf and in Fig.~\ref{fig:morePrcsSnicCase} for the WC-SNIC.
Notice that as $A$ increases, the shape of the PRC shows a sudden increase for certain phase values
(see panel A in Figs. \ref{fig:morePrcsHopfCase} and \ref{fig:morePrcsSnicCase}).
A more detailed discussion about this phenomenon will be given in Section~\ref{sec:largeA}.

\begin{table}
	\begin{adjustbox}{minipage=.35\textwidth,valign=t}
\begin{center}{\begin{tabular}{ccc}
		\hline
		Parameter         & Hopf & SNIC \\
		\hline
		P & 2.5 & 1.45 \\
		Q & 0 & -0.75 \\
		\hline
		T & 5.26 & 13.62 \\
		$T_{rel}$ & 15T & 6T \\
		$-\lambda$ & -0.157 & -0.66 \\
		\hline
	\end{tabular}}
\end{center}
	\end{adjustbox}\hfill
	\begin{adjustbox}{minipage=[t]{.5\textwidth},valign=t}
		\setlength{\abovecaptionskip}{0pt}%
{\caption{$(P,Q)$ parameter values for the Wilson-Cowan model close to the corresponding type
		of bifurcation. For the indicated parameter values and $A=0$, system \eqref{eq:WCsys} has a stable limit cycle of period $T$ and characteristic exponent $-\lambda$.}
	\label{table:WCprms}}%
	\end{adjustbox}	
\end{table}

\begin{figure}[H]
\centering
{\includegraphics[width=80mm]{./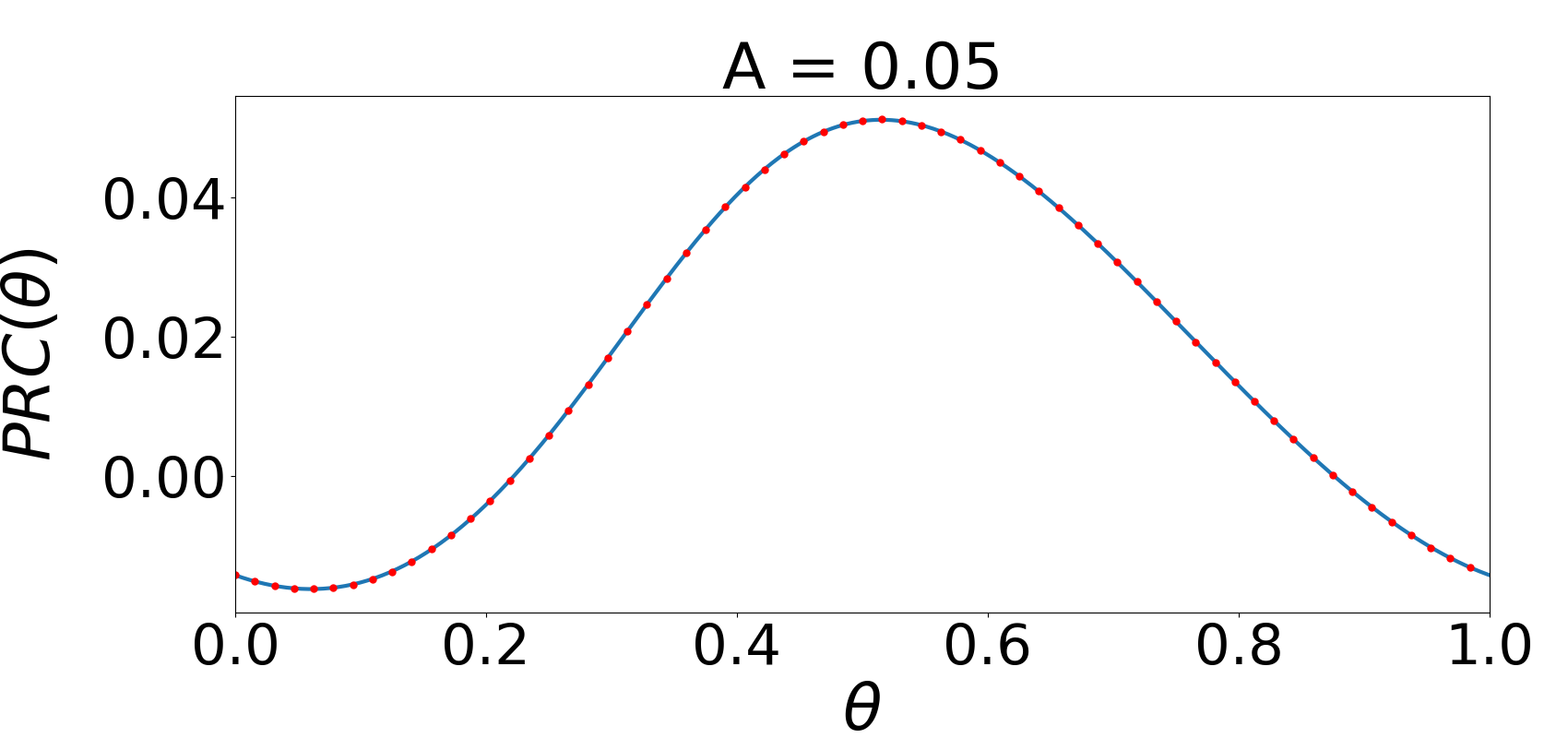}}
{\includegraphics[width=80mm]{./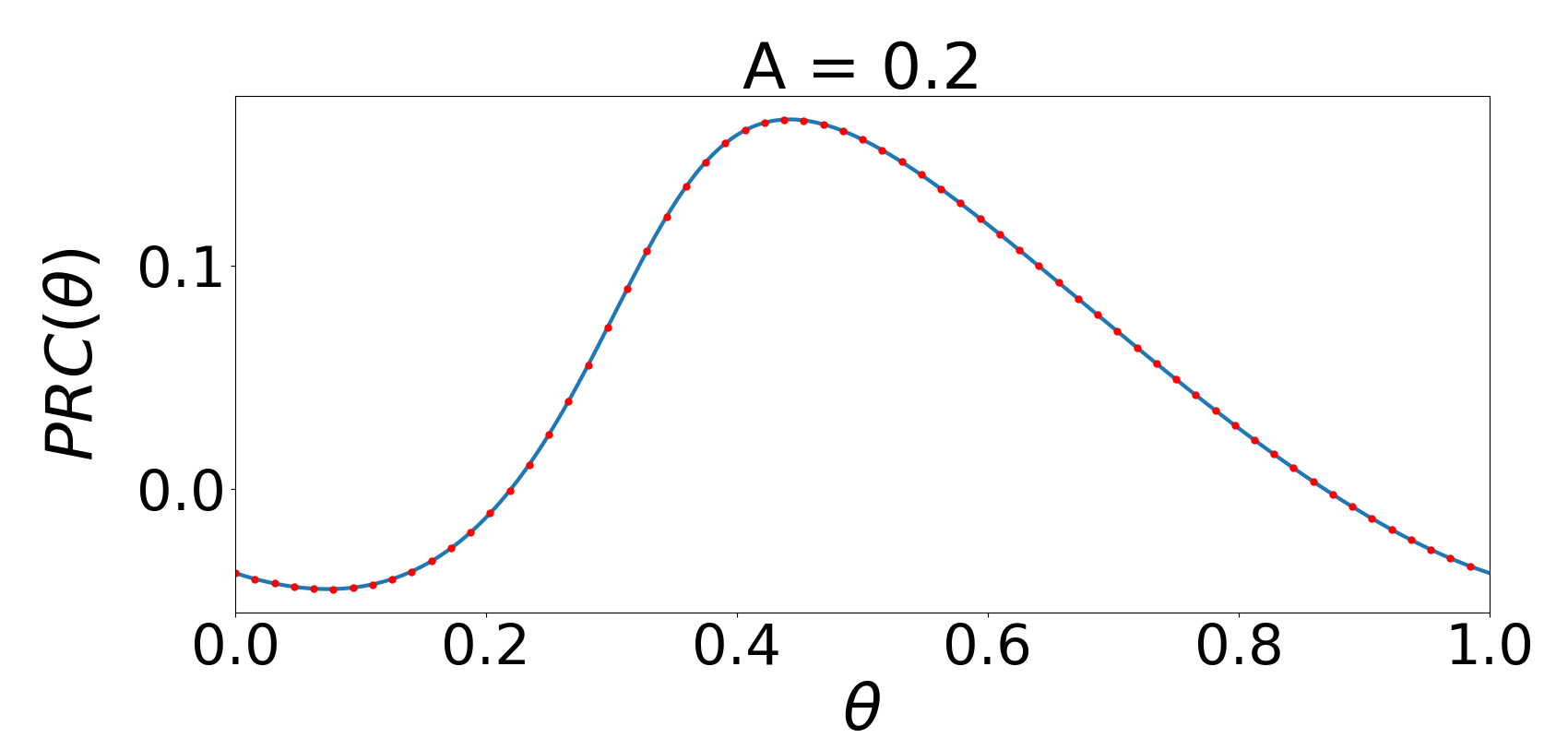}}
{\includegraphics[width=80mm]{./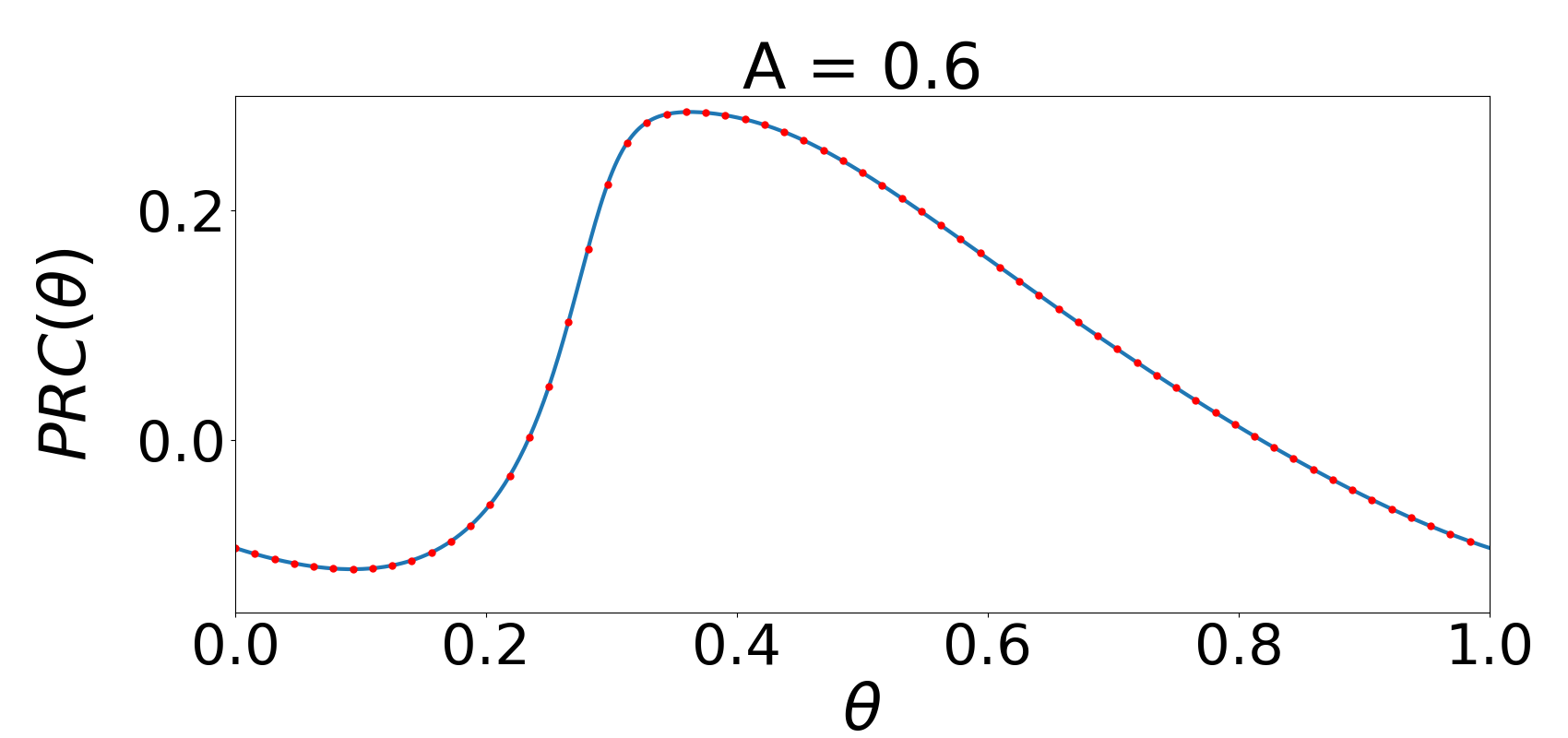}}
{\includegraphics[width=80mm]{./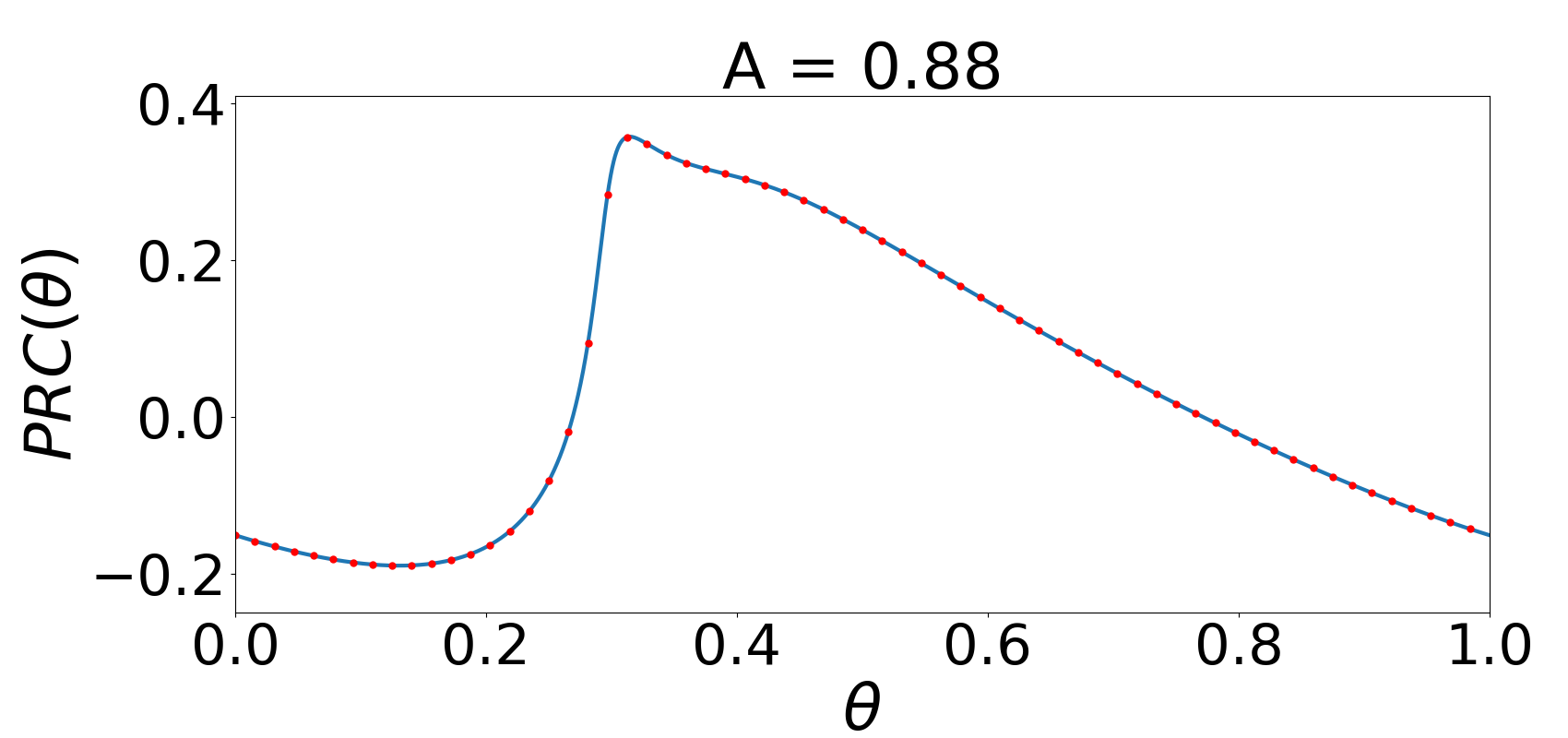}}
\caption{PRCs for the Wilson-Cowan model near a Hopf bifurcation (WC-Hopf) for different values of the amplitude
$A$ (as indicated in each panel) showing the comparison between the parameterization method (solid blue line) and the \textit{standard method}
(red dots).} \label{fig:prcsHopfCase0}
\end{figure}

\begin{figure}[H]
\centering
{\includegraphics[width=80mm]{./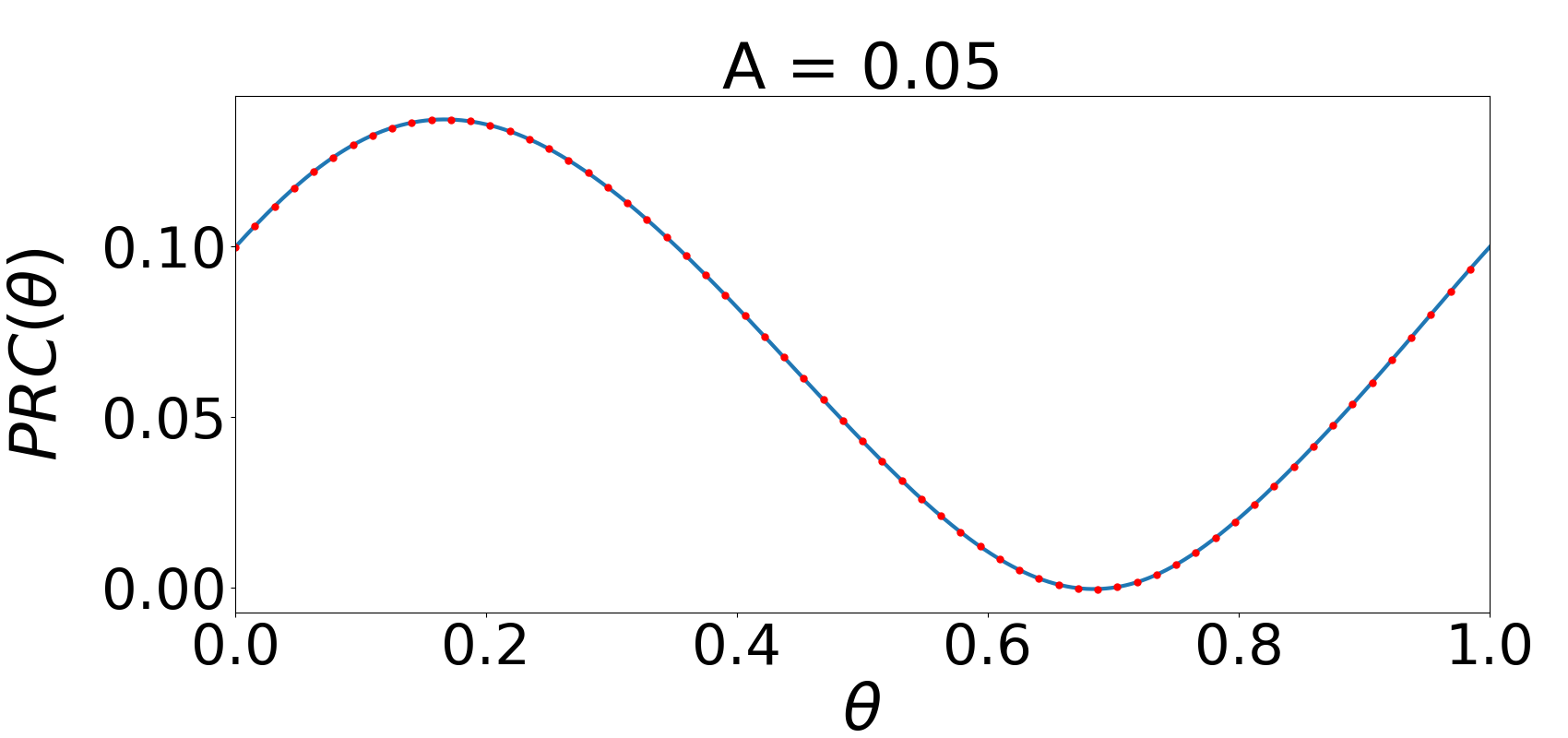}}
{\includegraphics[width=80mm]{./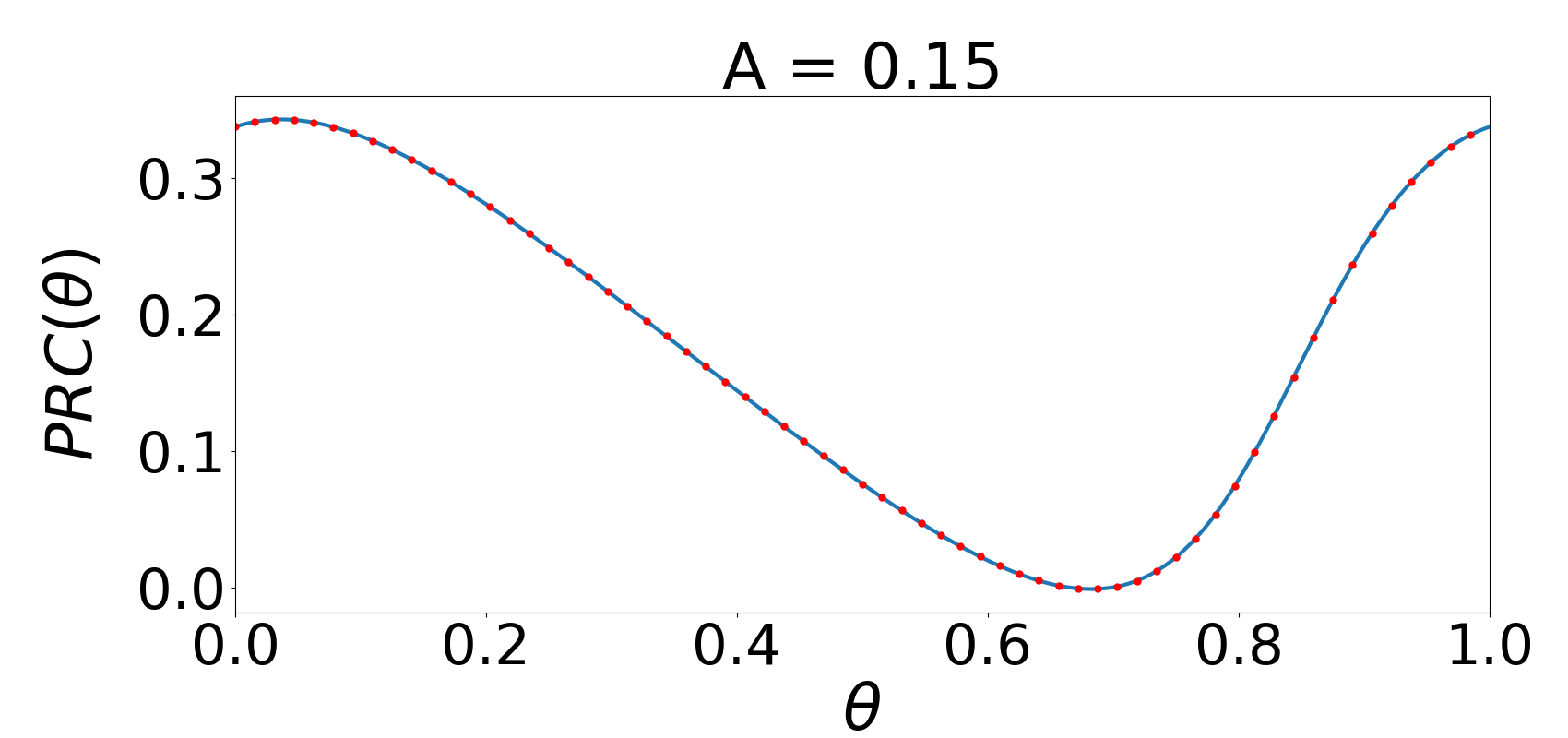}}
{\includegraphics[width=80mm]{./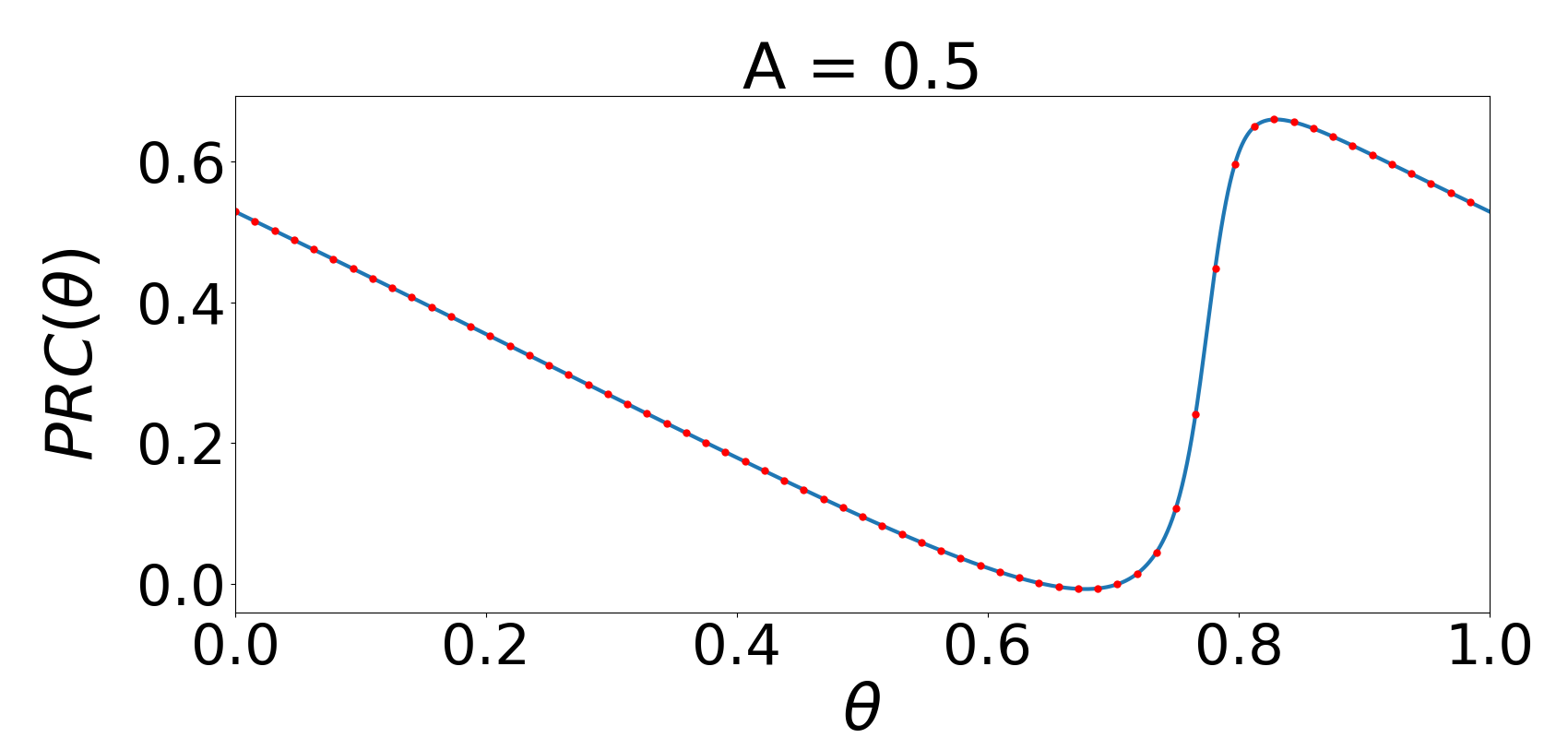}}
{\includegraphics[width=80mm]{./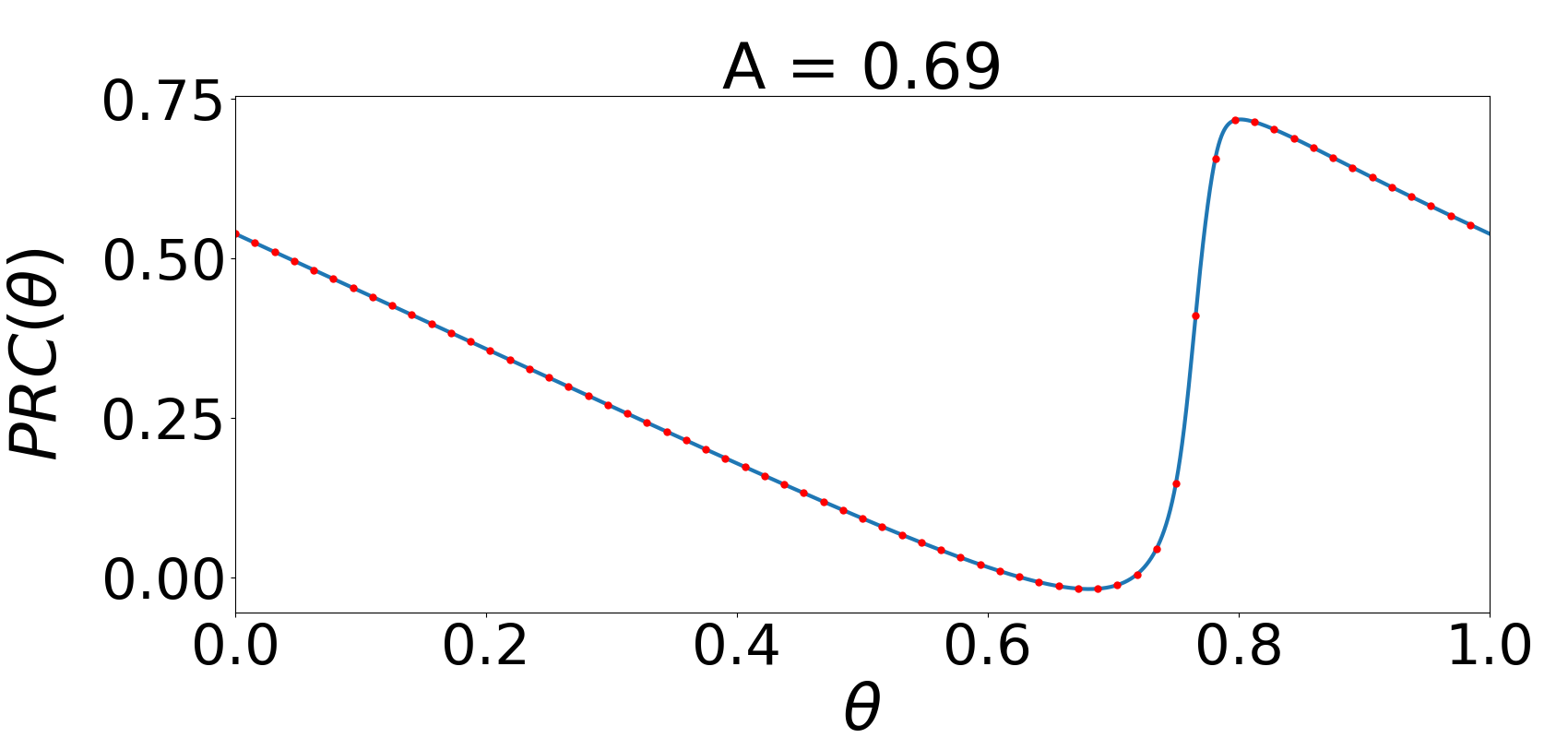}}
\caption{PRCs for the Wilson-Cowan near a SNIC bifurcation (WC-SNIC) for different values of the amplitude
$A$ (as indicated in each panel) showing the comparison between the parameterization method (solid blue line) and the \textit{standard method}
(red dots).} \label{fig:prcsSnicCase}
\end{figure}

\begin{figure}[H]
\centering
{\includegraphics[width=153mm]{./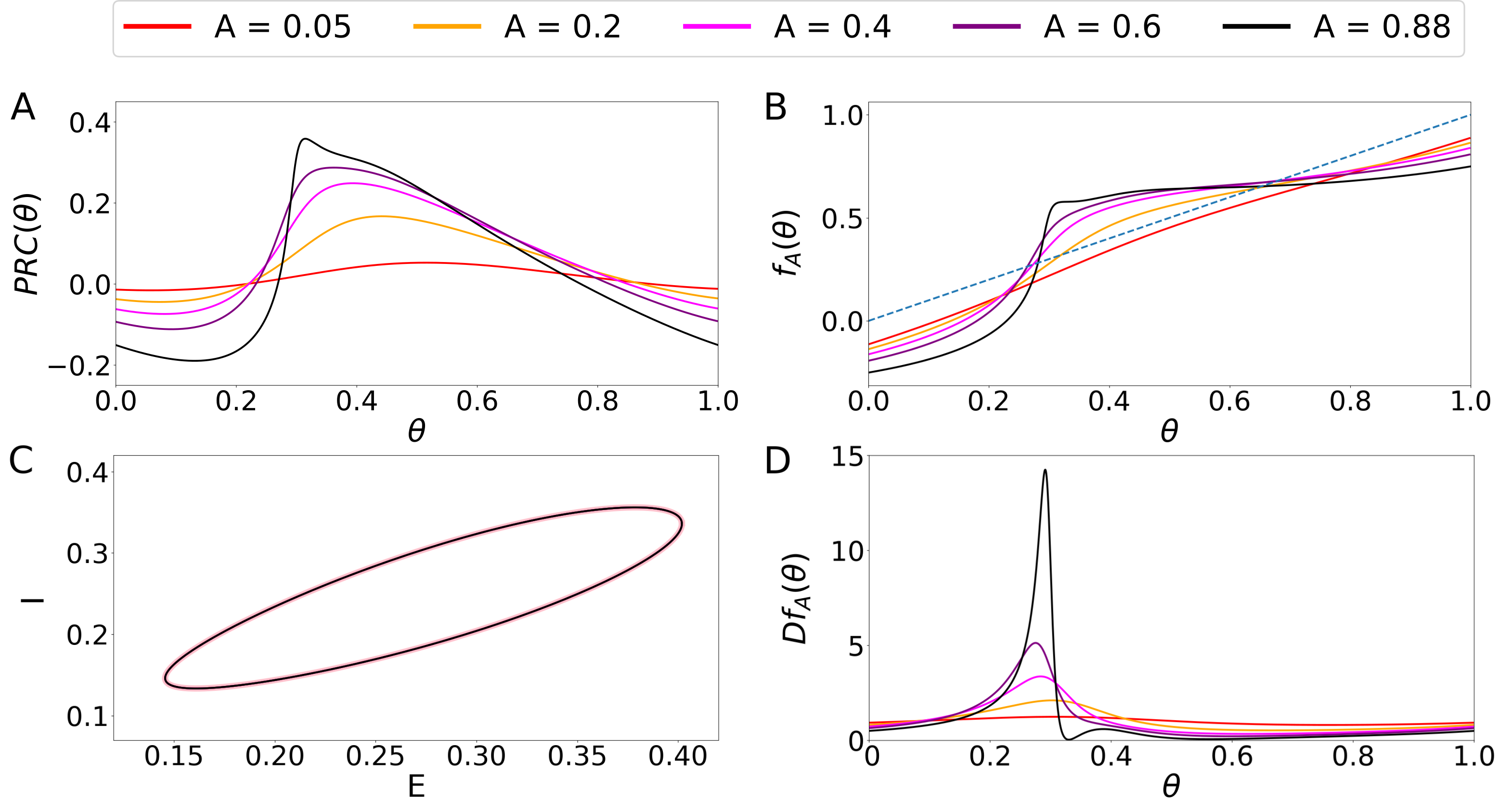}}
\caption{For the Wilson-Cowan model near a Hopf bifurcation (WC-Hopf)
and different amplitude values $A$  we show:
(A) the PRCs, (B) the dynamics $f_A(\theta)$ on the invariant curve $\Gamma_A$,
(C) the invariant curve $\Gamma_A$, (D) the derivative of $f_A(\theta)$.  
The dashed blue line in panel B corresponds to the identity function.}\label{fig:morePrcsHopfCase}
\end{figure}

\begin{figure}[H]
\centering
{\includegraphics[width=153mm]{./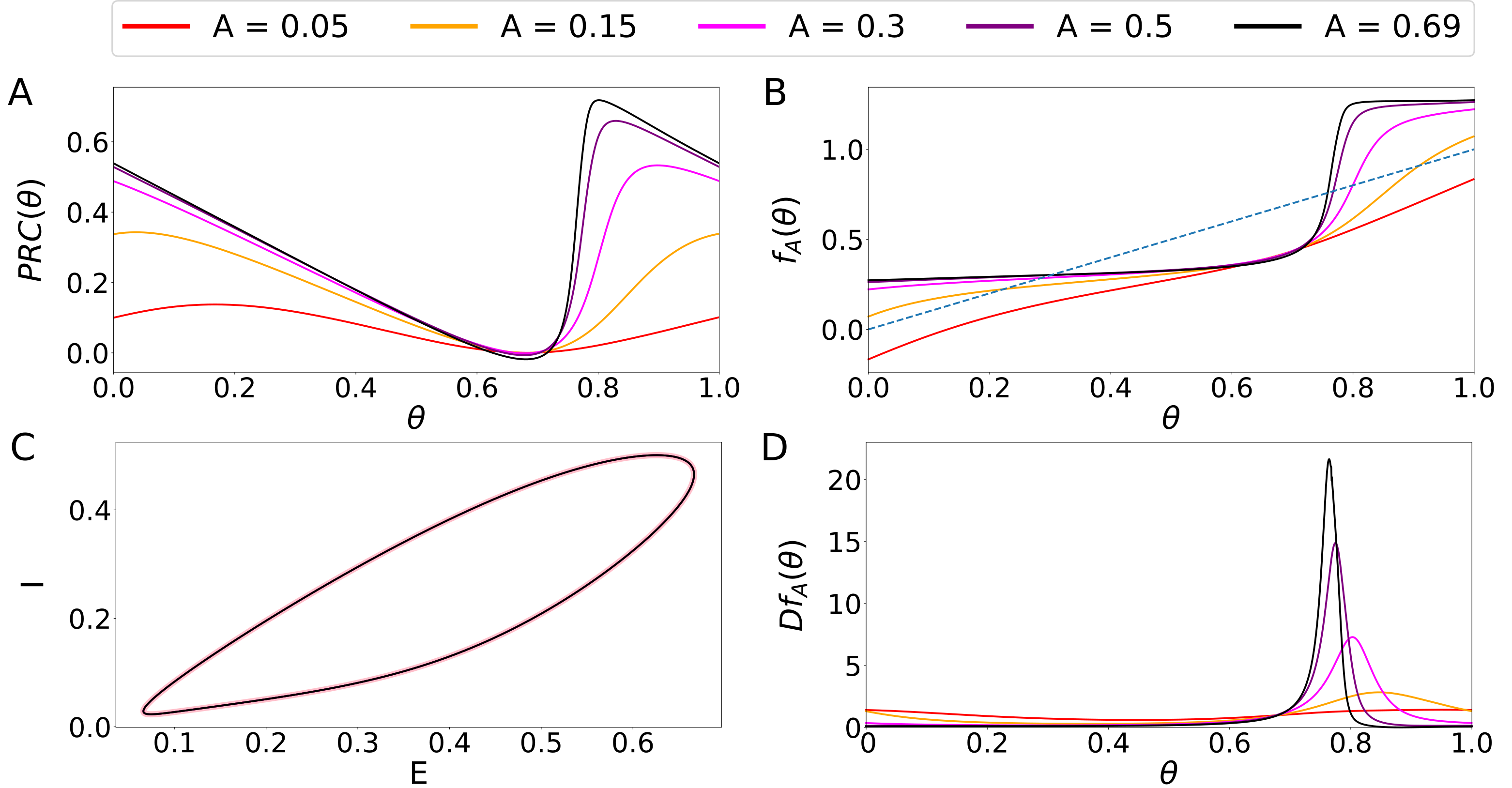}}
\caption{For the Wilson-Cowan model near a SNIC bifurcation (WC-SNIC) and different amplitude values we show:
(A) the PRCs, (B) the dynamics $f_A(\theta)$ on the invariant curve $\Gamma_A$,
(C) the invariant curve $\Gamma_A$,  (D) the derivative of $f_A(\theta)$. 
The dashed blue line in panel B corresponds to the identity function.} \label{fig:morePrcsSnicCase}
\end{figure}

\textbf{The Morris-Lecar model}.
It was originally developed to study the excitability properties for the
muscle fiber of the giant barnacle, and it has been established as a paradigm for
the study of different neuronal excitability types \cite{rinzel1989analysis, RinzelH13}.
The perturbed model has the form (see \cite{morris1981}):
\begin{equation}\label{eq:MLsys}
\begin{split}
C \dot{V} &= I_{app} - g_L(V - V_L) - g_Kw(V - V_K) - g_{Ca}m_{\infty}(V)(V - V_{Ca}) + Ap(t),\\
\dot{w} &= \phi \frac{w_{\infty}(V) - w}{\tau_w(V)},
\end{split}
\end{equation}
where
\begin{equation}
\begin{split}
m_{\infty}(V) = \frac{1}{2}(1 + \tanh((V-V_1)/V_2)),\\
w_{\infty}(V) = \frac{1}{2}(1 + \tanh((V-V_3)/V_4)),\\
\tau_{w}(V) = (\cosh((V-V_3)/(2V_4)))^{-1}.
\end{split}
\end{equation}

As in the previous example, we consider two sets of parameters for which the system
displays a limit cycle across a Hopf and a SNIC bifurcation \cite{ErmentroutTerman2010,RinzelH13}.
We will refer to them as MC-Hopf and MC-SNIC, respectively.
Some parameters of the model will be common to both cases, namely
$C=20, V_L=-60, V_K=-84, V_{Ca}=120, V_1=-1.2, V_2=18, g_L=2, g_K=8$.
The other parameter values are listed in Table~\ref{tab:MLprms}.
\begin{table}
	\begin{adjustbox}{minipage=.35\textwidth,valign=t}
\begin{center}
	\begin{tabular}{ccc}
		\hline
		Parameter         & Hopf & SNIC \\
		\hline
		$\phi$    & 0.04 & 0.067 \\
		$g_{Ca}$  & 4.4 & 4 \\
		$V_3$     & 2 & 12 \\
		$V_4$     & 30 & 17.4 \\
		$I_{app}$ & 91 & 45 \\
		\hline
		$T$       & 99.27 & 99.192 \\
		$T_{rel}$ & 6T & 5T \\
		$-\lambda$ & -0.0919 & -0.1198 \\
		\hline
	\end{tabular}
\end{center}
		\end{adjustbox}\hfill
		\begin{adjustbox}{minipage=[t]{.5\textwidth},valign=t}
			\setlength{\abovecaptionskip}{0pt}%
			{\caption{Parameter values for the Morris-Lecar model close to the
					corresponding type of bifurcation. For the indicated parameter values and $A=0$, 
					system~\eqref{eq:MLsys} has a stable periodic orbit of period $T$ and
					characteristic exponent $-\lambda$.}
				\label{tab:MLprms}}%
		\end{adjustbox}	
	\end{table}

We compute the PRC for different values of the amplitude $A$. In Figs. \ref{fig:prcsMLHopfCase} and \ref{fig:prcsMLSnicCase} we show
the comparison between the standard method and the parameterization method for ML-Hopf and ML-SNIC, respectively.
Again, we remark the good agreement between both methods. 
Other elements of the computation of the PRCs using the parameterization method are shown in Figures~\ref{fig:morePrcsMLHopfCase} and
\ref{fig:morePrcsMLSnicCase}. Again, both cases show a sharp rise in the PRC for certain phase values as the amplitude increases. 
We refer the reader to Section~\ref{sec:largeA} for a more detailed discussion.

\begin{figure}[H]
\centering
{\includegraphics[width=80mm]{./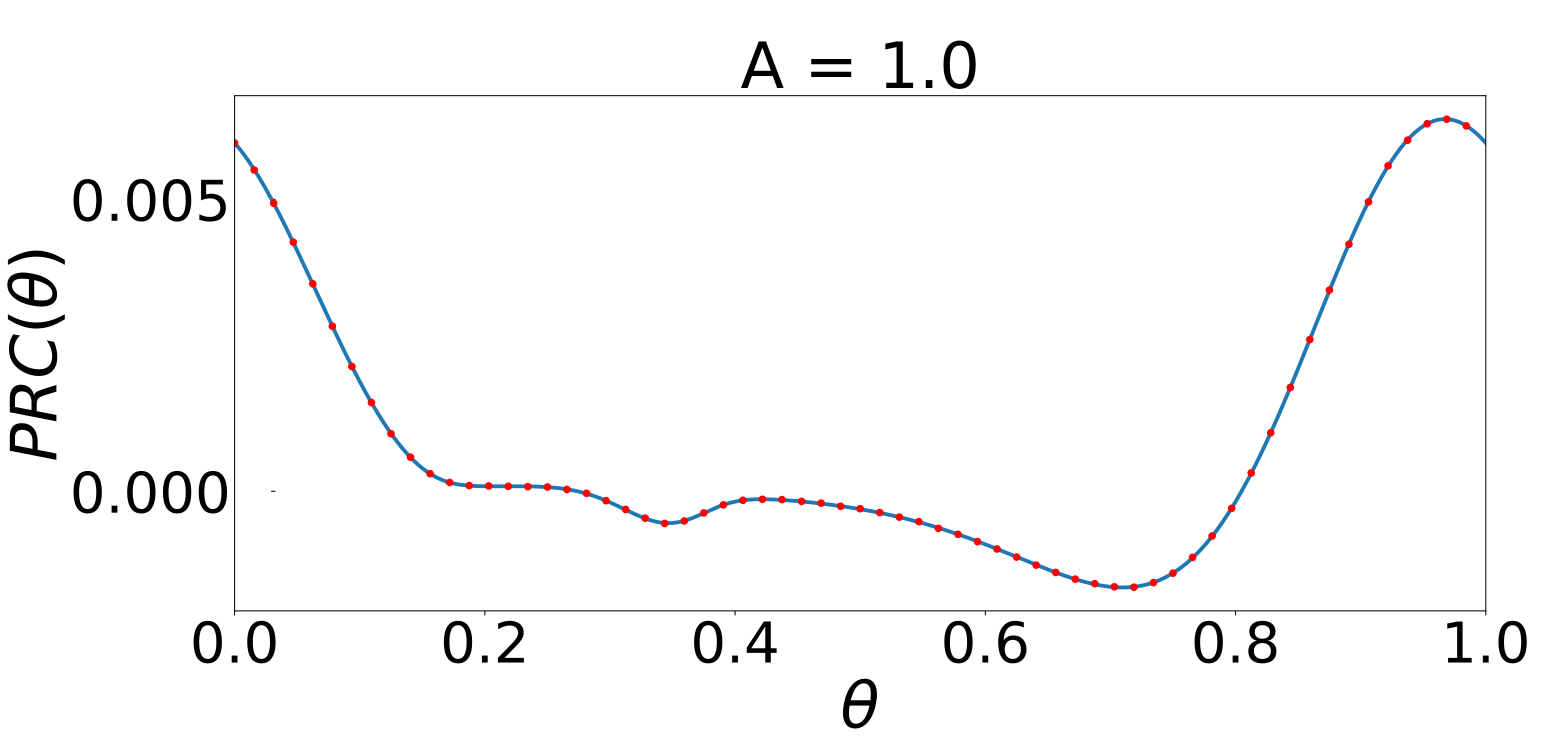}}
{\includegraphics[width=80mm]{./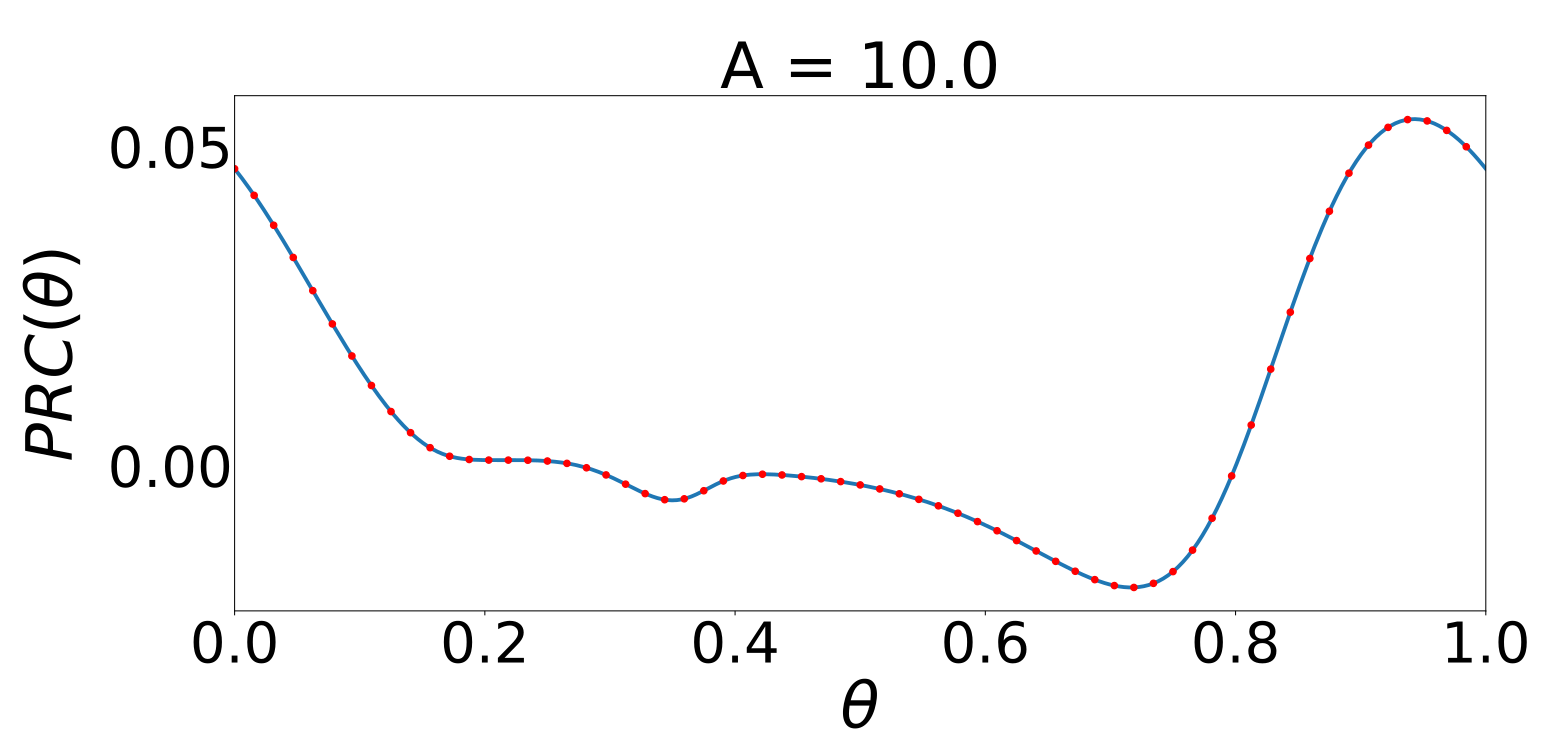}}
{\includegraphics[width=80mm]{./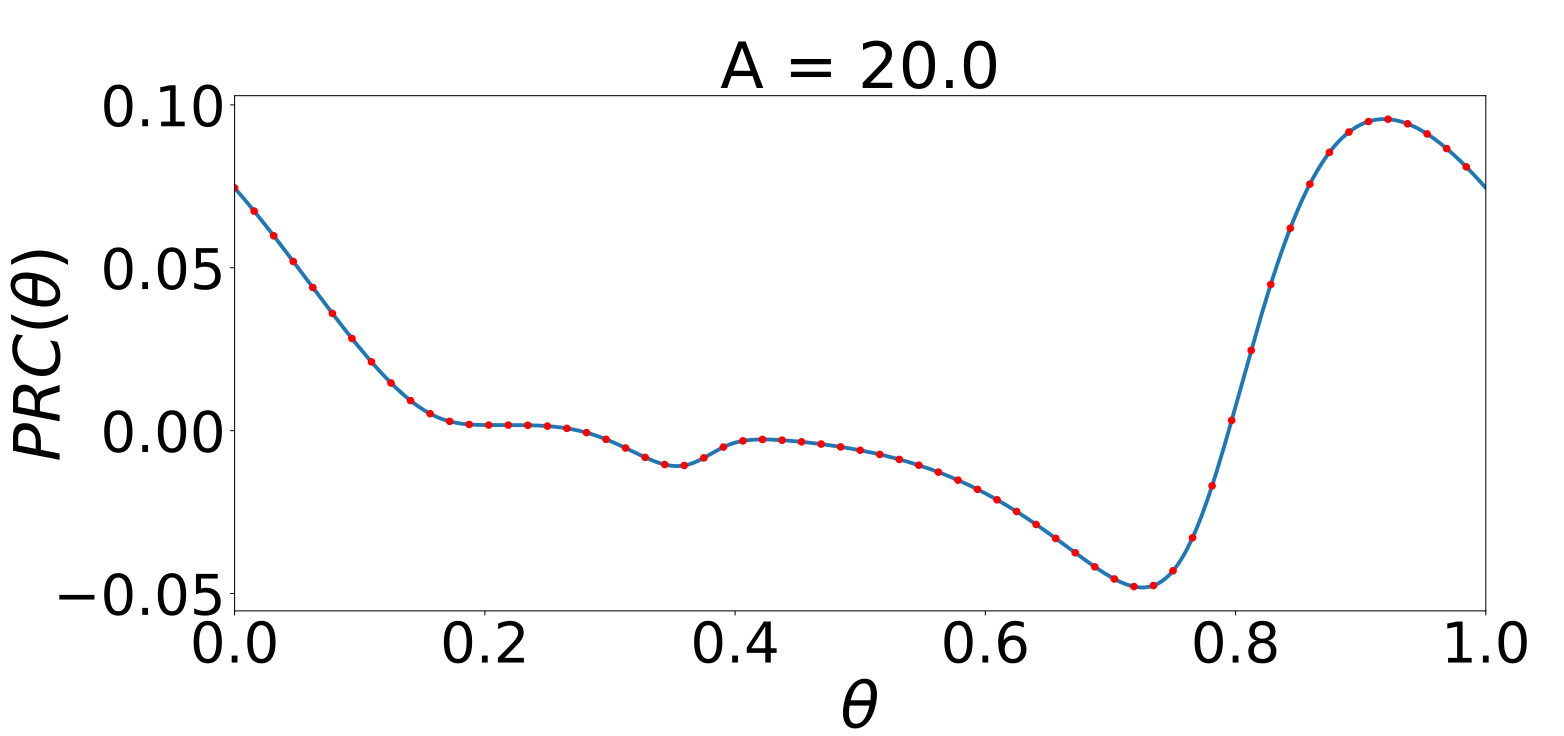}}
{\includegraphics[width=80mm]{./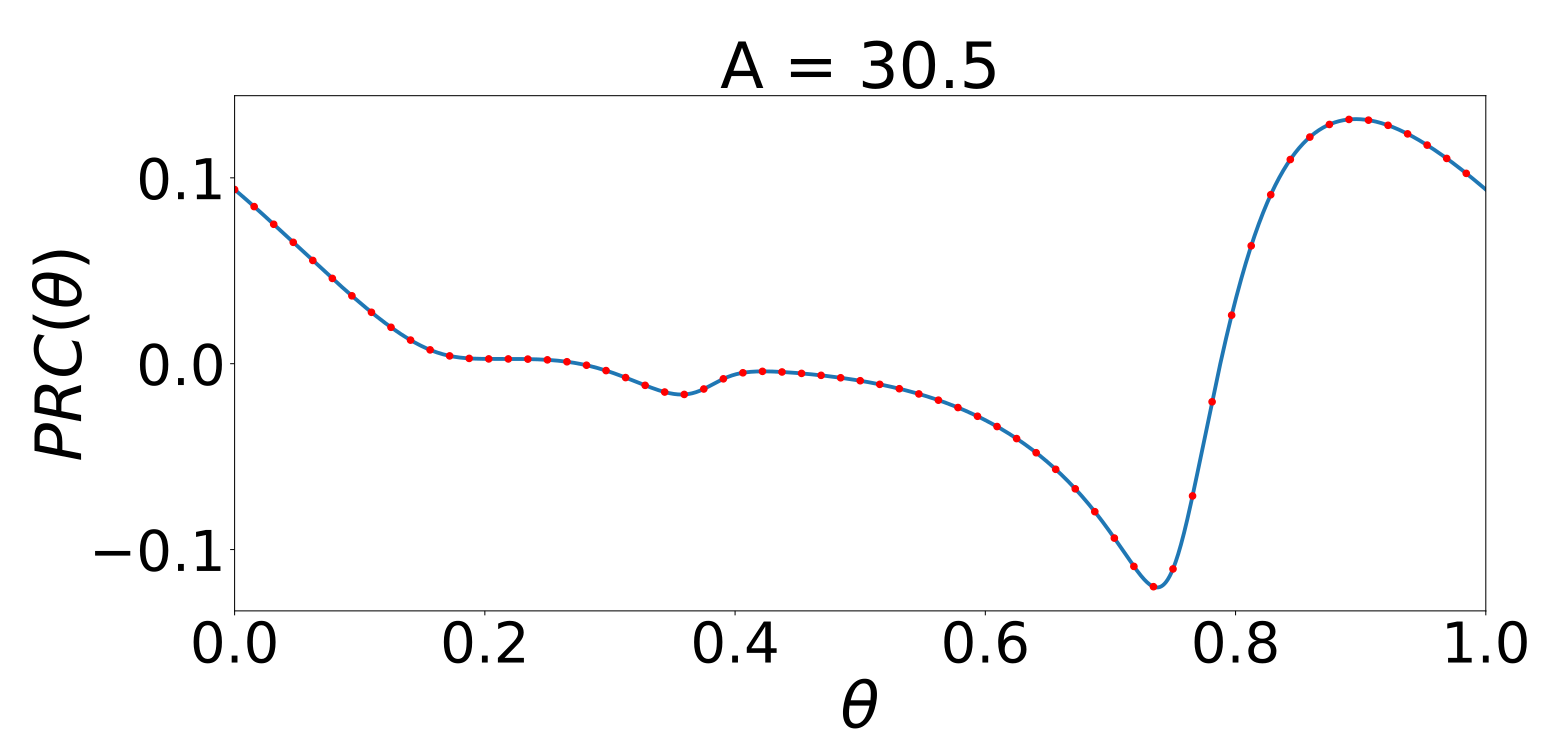}}
\caption{PRCs for the Morris-Lecar model near a Hopf bifurcation (ML-Hopf) for different values of the amplitude (indicated in each panel) 
showing the comparison between the parameterization method (solid blue line) and the \textit{standard method}
(red dots).} \label{fig:prcsMLHopfCase}
\end{figure}

\begin{figure}[H]
\centering
{\includegraphics[width=80mm]{./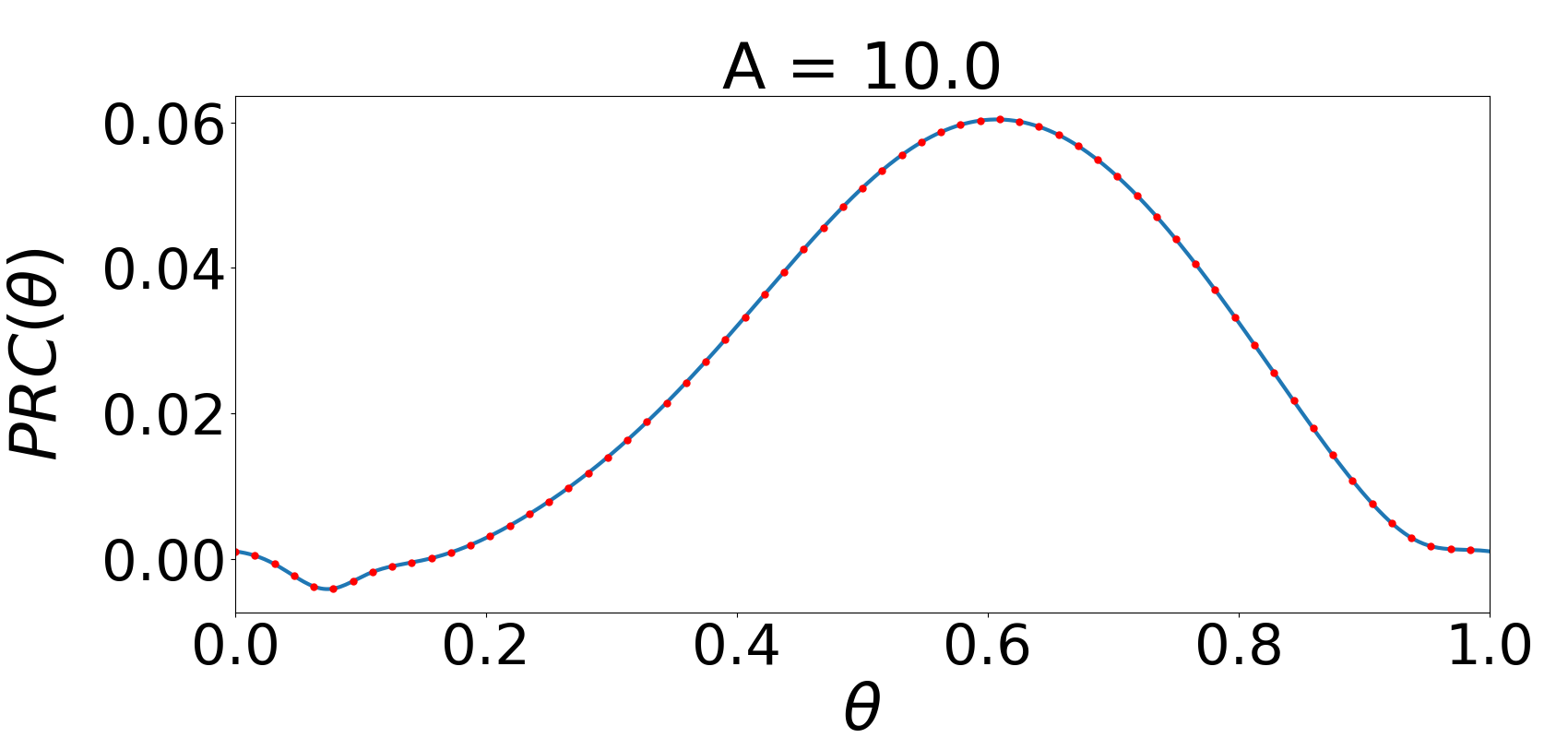}}
{\includegraphics[width=80mm]{./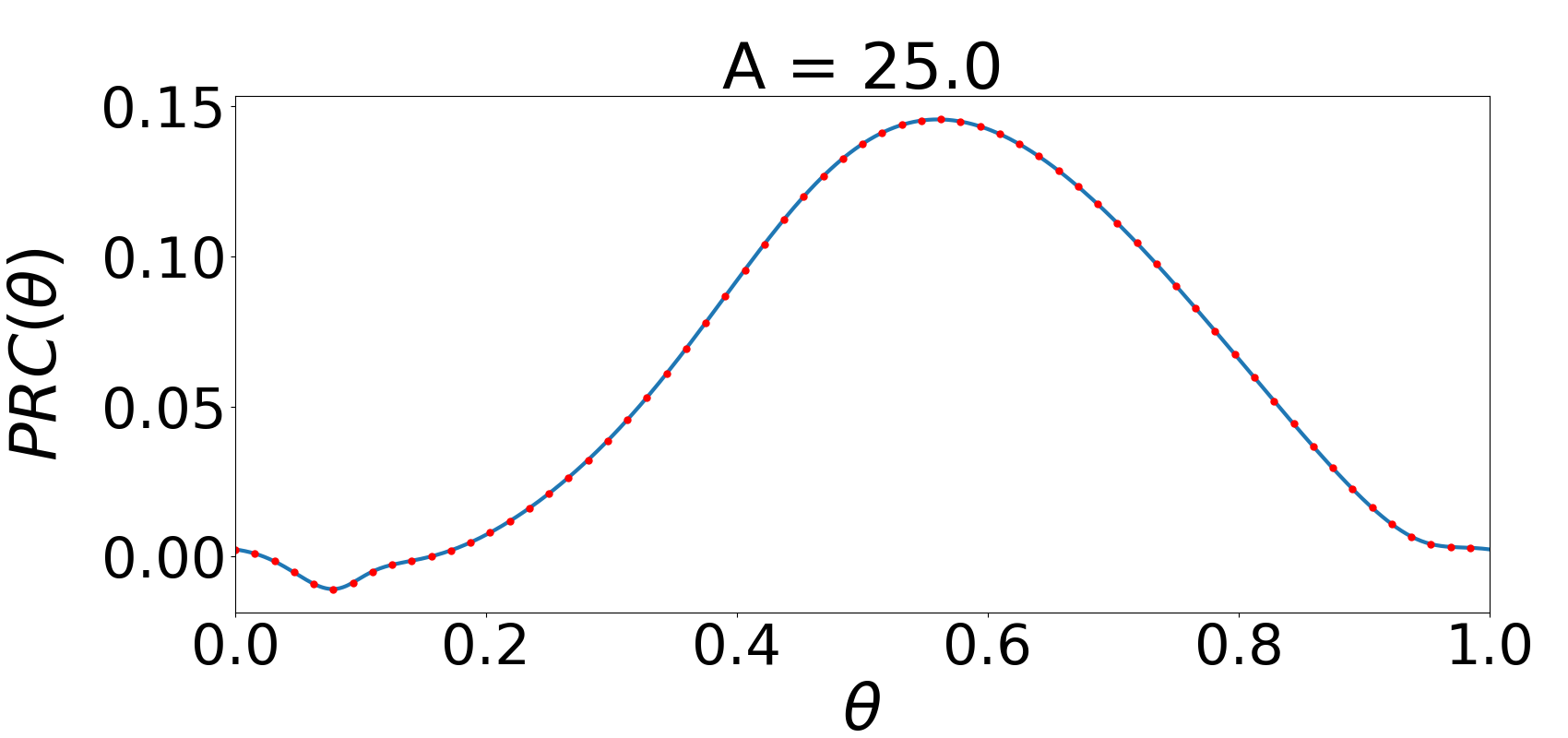}}
{\includegraphics[width=80mm]{./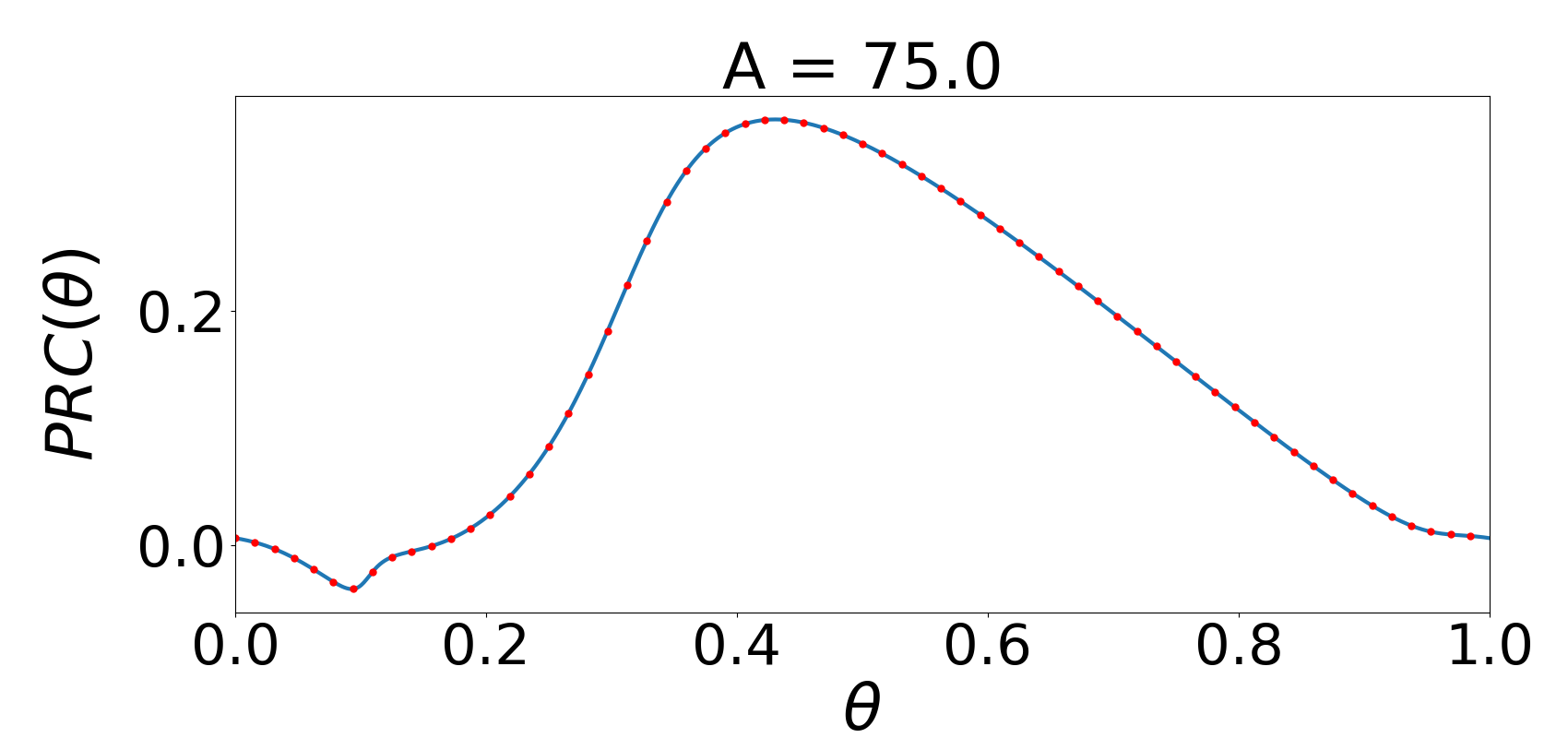}}
{\includegraphics[width=80mm]{./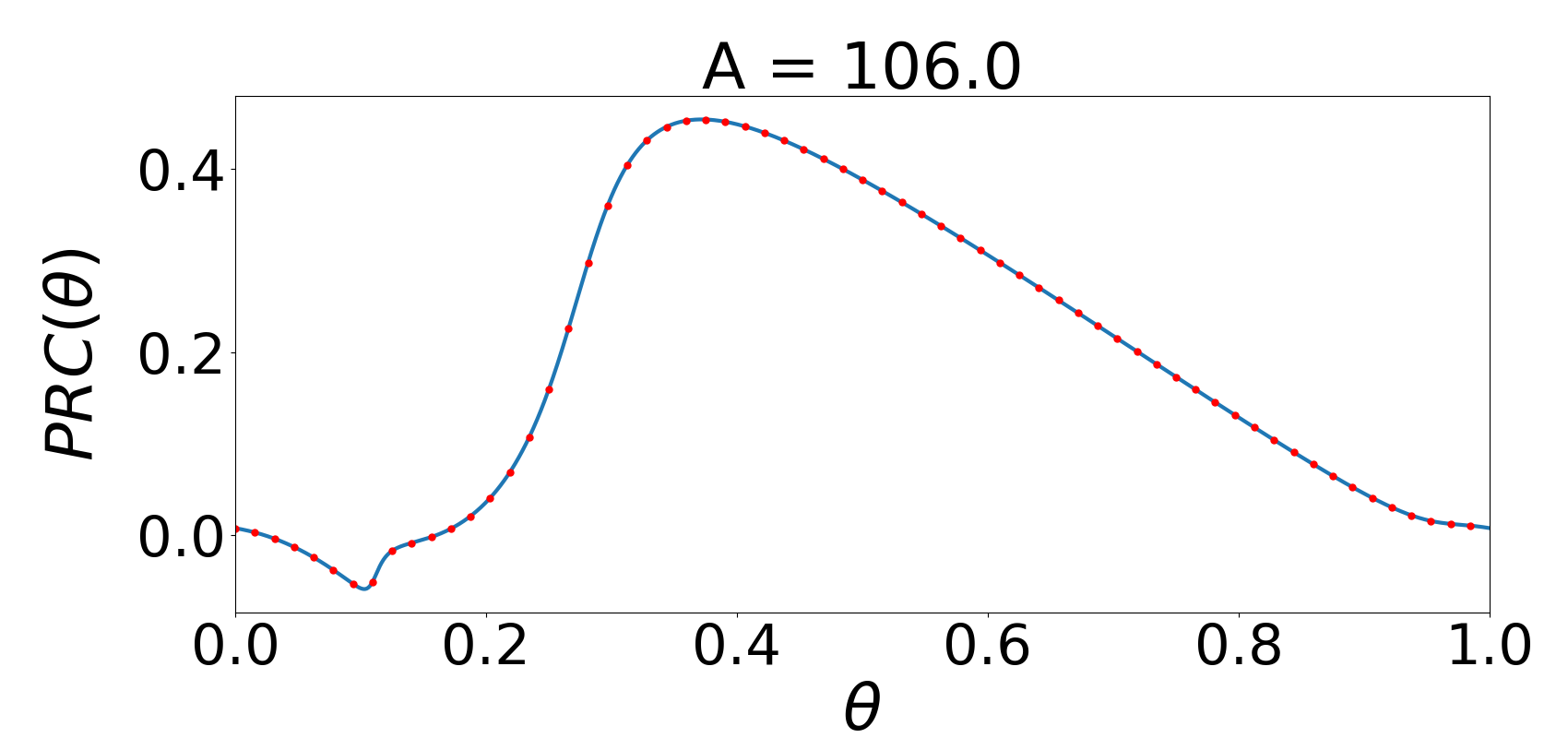}}
\caption{PRCs for the Morris-Lecar model near a SNIC bifurcation (ML-SNIC) for different values of the amplitude (indicated in each panel) 
showing the comparison between the parameterization method (solid blue line) and the \textit{standard method}
(red dots).} \label{fig:prcsMLSnicCase}
\end{figure}

\begin{figure}[H]
\centering
{\includegraphics[width=153mm]{./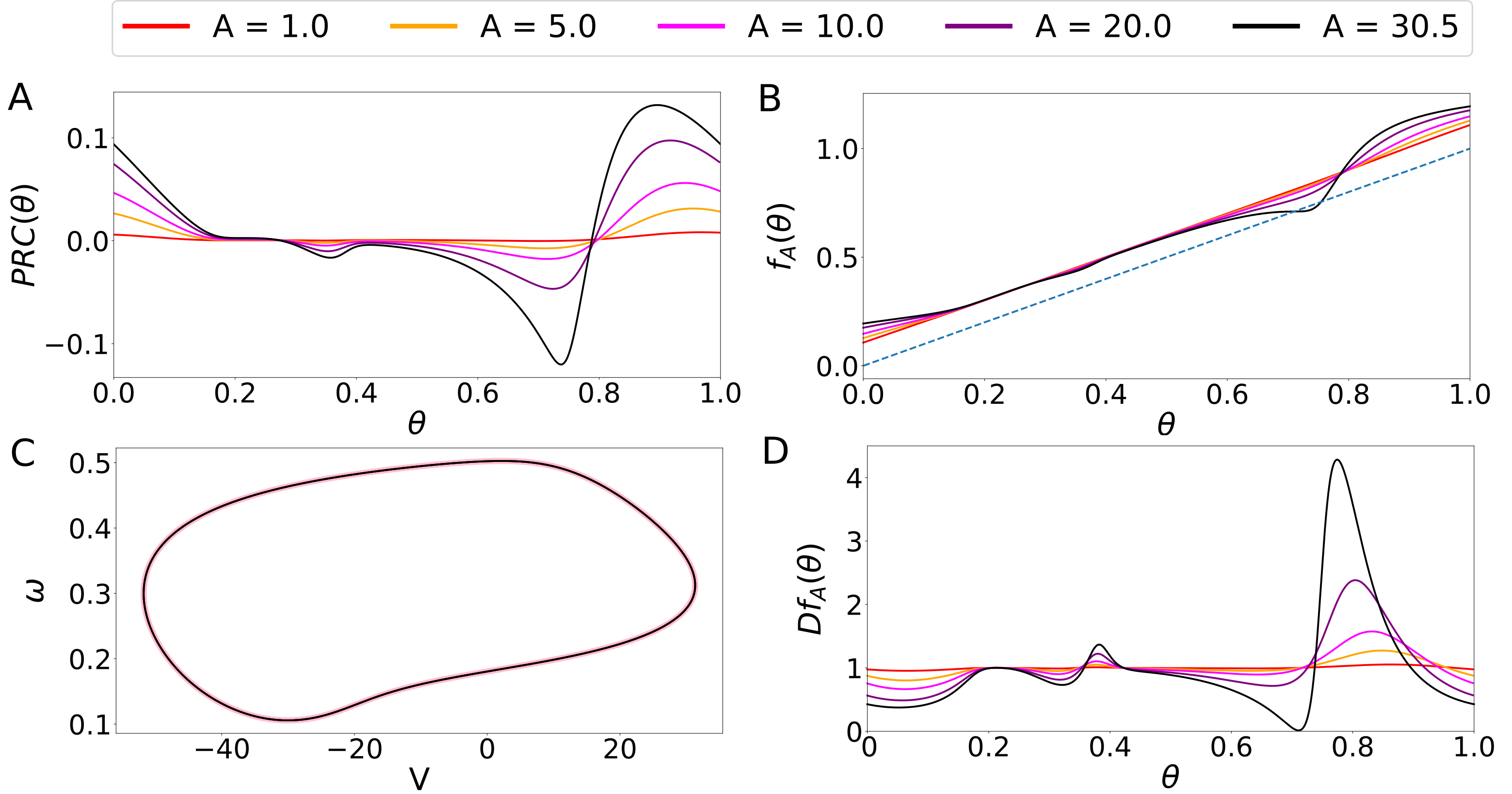}}
\caption{For the Morris-Lecar near a Hopf bifurcation (ML-Hopf) and different amplitude values we show: 
(A) the PRCs, (B) the dynamics $f_A(\theta)$ on the invariant curve $\Gamma_A$,
(C) the invariant curve $\Gamma_A$, (D) the derivative of $f_A(\theta)$. 
The dashed blue line in panel B corresponds to the identity function.} \label{fig:morePrcsMLHopfCase}
\end{figure}

\begin{figure}[H]
\centering
{\includegraphics[width=153mm]{./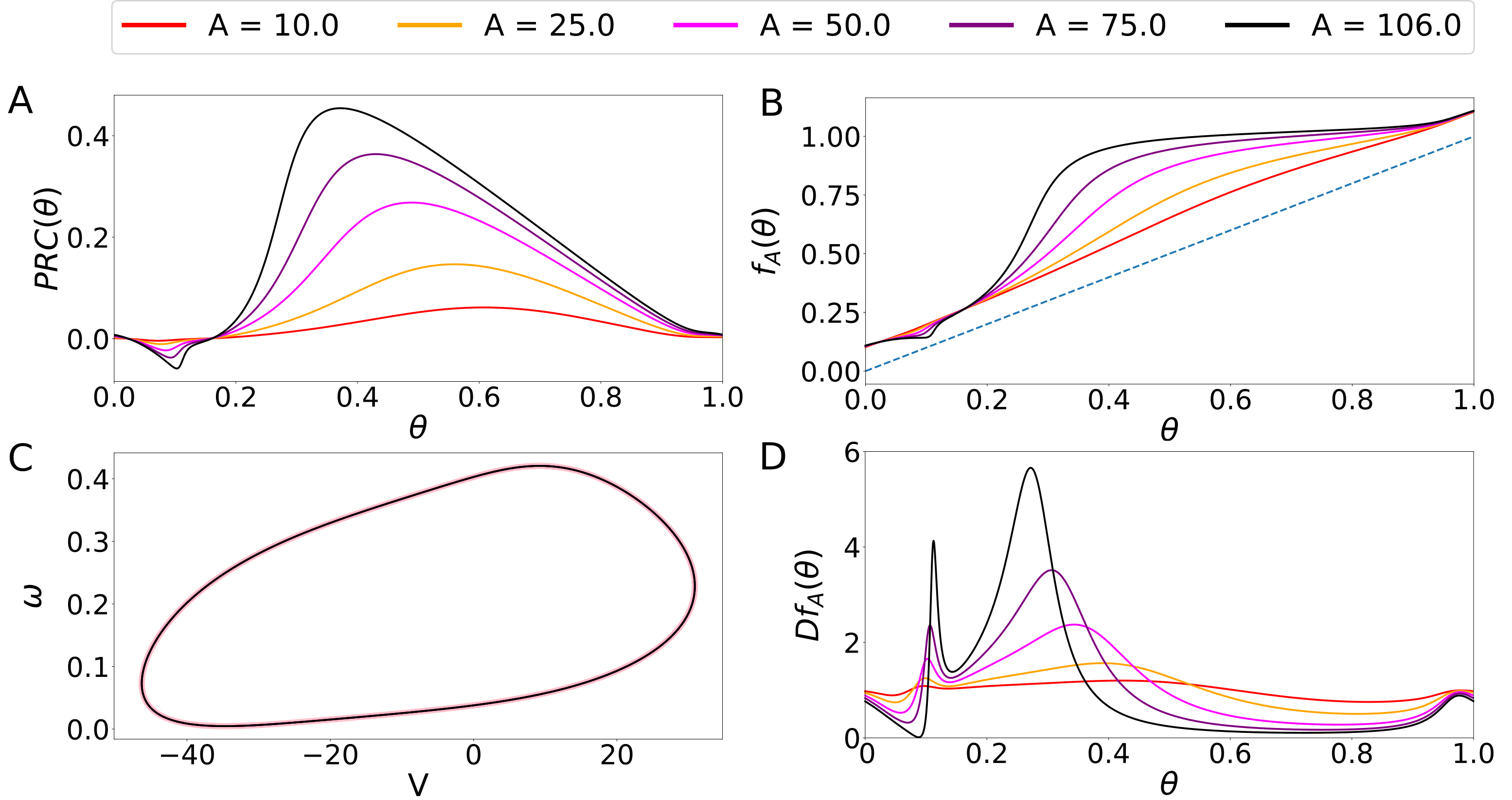}}
\caption{For the Morris-Lecar near a SNIC bifurcation (ML-SNIC) and different amplitude values we show: 
(A) the PRCs, (B) the dynamics $f_A(\theta)$ on the invariant curve $\Gamma_A$,
(C) the invariant curve $\Gamma_A$, (D) the derivative of $f_A(\theta)$. 
The dashed blue line in panel B corresponds to the identity function.} \label{fig:morePrcsMLSnicCase}
\end{figure}

\subsection{Large Amplitude Perturbations}\label{sec:largeA}
The application of the parameterization method using the algorithms in Appendix \ref{sec:num_alg} (see Section~\ref{sec:num_method}) strongly relies on the
existence of an invariant curve $\Gamma_A$ for the stroboscopic map of a system with an `artificially'
constructed periodic perturbation (see Theorem~\ref{thm:theorem1}). In the numerical examples shown in Figs.~\ref{fig:morePrcsHopfCase},  
\ref{fig:morePrcsSnicCase}, \ref{fig:morePrcsMLHopfCase} and \ref{fig:morePrcsMLSnicCase},
the computation of the invariant curve fails when the amplitude becomes large and approaches a certain value $A^*$ (which is different for each example), 
and so does the computation of the PRC using this method. In this Section we will discuss how changes in the waveform of the PRC might be related to normally
hyperbolic properties of the invariant curve. We will first focus our discussion on the WC-Hopf model.

First notice that, as the amplitude $A$ increases, the PRC becomes steeper (see Fig~\ref{fig:morePrcsHopfCase}A). 
By looking at the internal dynamics $f_A$ on the invariant curve $\Gamma_A$
(see Fig.~\ref{fig:morePrcsHopfCase}B), we observe that as the amplitude increases, the curve $f_A$
shows a sharp rise followed by a flat region. Moreover, there appear stable and unstable fixed points on the invariant curve $\Gamma_A$ 
(intersection of $f_A$ with the identity line on Fig.~\ref{fig:morePrcsHopfCase}B). 
Thus, the curve $\Gamma_A$ preserves  its normal hyperbolicity as 
long as the contraction/expansion rates on the invariant curve are weaker than the contraction rates on the normal directions 
(see Eq. \eqref{eq:equation43}). Indeed,
since the contraction rate in the normal direction is $\ocal(e^{-\lambda T_{rel}})$ (see \eqref{eq:equation43}) this means that 
$Df_A(\theta)=d\theta_{pert}/d\theta$ must remain bounded away from 0 (see Eq. \eqref{eq:nhcond}). 
However, we observe that as $A$ increases the value of $Df_A$ approaches 0 for a certain $\theta$ (see Fig.~\ref{fig:morePrcsHopfCase}D), 
thus causing the loss of the normal hyperbolicity property and the breakdown of the curve. 
For values of $A$ slightly smaller than the one for which $Df_A(\theta)$ vanishes, the numerical method in Appendix~\ref{sec:num_alg} fails to converge. 
However, we can apply the modified parameterization method provided by the algorithms in Section~\ref{sec:num_method} and compute the function $g_A$ 
(see equation~\eqref{eq:invariance_mod0}) and the PRC beyond the existence 
of an invariant curve (see Figs.~\ref{fig:prcsHopfCase} B, C, D right). 

Notice that the method in algorithm \ref{alg:algorithm_new0} also works if the invariant curve exists (see Fig.~\ref{fig:prcsHopfCase}A right). 
In this case, $g_A$ is $\ocal(e^{-\lambda T_{rel}})$-close to $f_A$ (see equations~\eqref{eq:fbara}, \eqref{eq:ffbar}, \eqref{eq:expandingK},  \eqref{eq:invariance_mod}). 
Therefore, for practical purposes, the modified method is faster and accurate enough to compute the PRC.

It is possible to describe the phenomenon of the breakdown of the curve in a geometric way using the concept of isochrons (curves of constant phase) 
and phaseless sets of the original limit cycle. 
For the model considered, the isochrons for the unperturbed limit cycle are shown in Fig.~\ref{fig:prcsHopfCase} (left). 
Notice that the unperturbed system has an unstable focus $P$ for which the isochrons are not defined (the phaseless set). 
Now, we consider the image of the curve $\Gamma_0$ under the map $F_{pert}$ introduced in \eqref{eq:Fpert}, for different values of $A$. 
Of course, the intersection of the curve $\Gamma_{pert}=F_{pert}(\Gamma_0)$ with the isochrons provides the new phases. 
For small values of $A$, $\Gamma_{pert}$ will intersect all the isochrons transversally leading to every possible new 
phase in a one-to-one correspondence (see Fig. \ref{fig:prcsHopfCase}A). 
Accordingly, the functions $f_A(\theta)$ and $g_A(\theta)$ are diffeomorphisms.
\newpage
However, as $A$ increases, there exists a value $A^*$ for which the curve $\Gamma_{pert}$ becomes tangent
to some isochrons, and therefore the curve $\Gamma_{pert}$ intersects some isochrons more than once (see Fig.~\ref{fig:prcsHopfCase}B). 
Thus, the map $g_A$  is no longer one-to-one, which means that $Dg_A$ vanishes for certain phases, 
causing the loss of normal hyperbolicity and the breakdown of the curve.

Clearly, the function $g_A$ for $A=0.95$  shows a local maximum and minimum (see Fig.~\ref{fig:prcsHopfCase}B), corresponding 
to an isochron tangency. 
When $A$ is increased further, the function $g_A$ splits in two. 
Indeed, the curve $\Gamma_{pert}$ will first intersect the phaseless point $P$ for a certain value  
$A \approx 1.035$ (see Fig.~\ref{fig:prcsHopfCase} C) and after that it will no longer enclose the point $P$, 
so it will not cross all the isochrons of the limit cycle (see Figure~\ref{fig:prcsHopfCase}D). The map $g_A$ will then be discontinuous at the point where the curve $\Gamma_{pert}$ intersects the phaseless set. 
After that, the function  $g_A$ will be continuous again when we take modulus 1, but, of course, the images will not span the whole interval $[0,1)$. Regarding the winding number classification for PRCs, defined as the number of times the curve $\Gamma_{pert}$ traverses a complete cycle as defined by the isochrons of the unperturbed limit cycle \cite{ glass1988clocks, glass1984discontinuities}, for $A \approx 1.035$ there is a transition from a type $1$ PRC to a type $0$ PRC.

The modified parameterization method introduced in Section~\ref{sec:sec42} allows for the computation of the Amplitude Response Curve (ARC) 
(see Algorithm~\ref{alg:algorithm_new}).
The ARCs for the amplitude values considered in Fig.~\ref{fig:prcsHopfCase} are shown in Fig.~\ref{fig:arcsHopfCase}.
The ARC provides information about how ``far'' in time the perturbation displaces the trajectory away from the limit cycle. 
That is, the larger the value of the ARC, the longer it will take for the displaced trajectory to relax back to the limit cycle (and therefore one should consider a larger $T_{rel}$). 

Notice that when the curve $\Gamma_{pert}$ intersects the phaseless set (point $P$), which occurs for a critical amplitude 
$A_c \approx 1.035$, 
there exists a perturbed trajectory which never returns to the limit cycle, and the ARC would show an essential discontinuity. 
Thus, for smaller values of the amplitude $A<A_c$, the ARC shows a peak whose size increases as the amplitude 
$A$ is increased towards $A_c$ (see Figs~\ref{fig:arcsHopfCase}A,B,C). 
However, for amplitude values larger than $A_c$ (Fig~\ref{fig:arcsHopfCase}D), the ARC peak decreases again with $A$, because 
$\Gamma_{pert}$ moves away from the neighbourhood of the phaseless point $P$.

\begin{figure}[H]
\centering
{\includegraphics[width=160mm]{./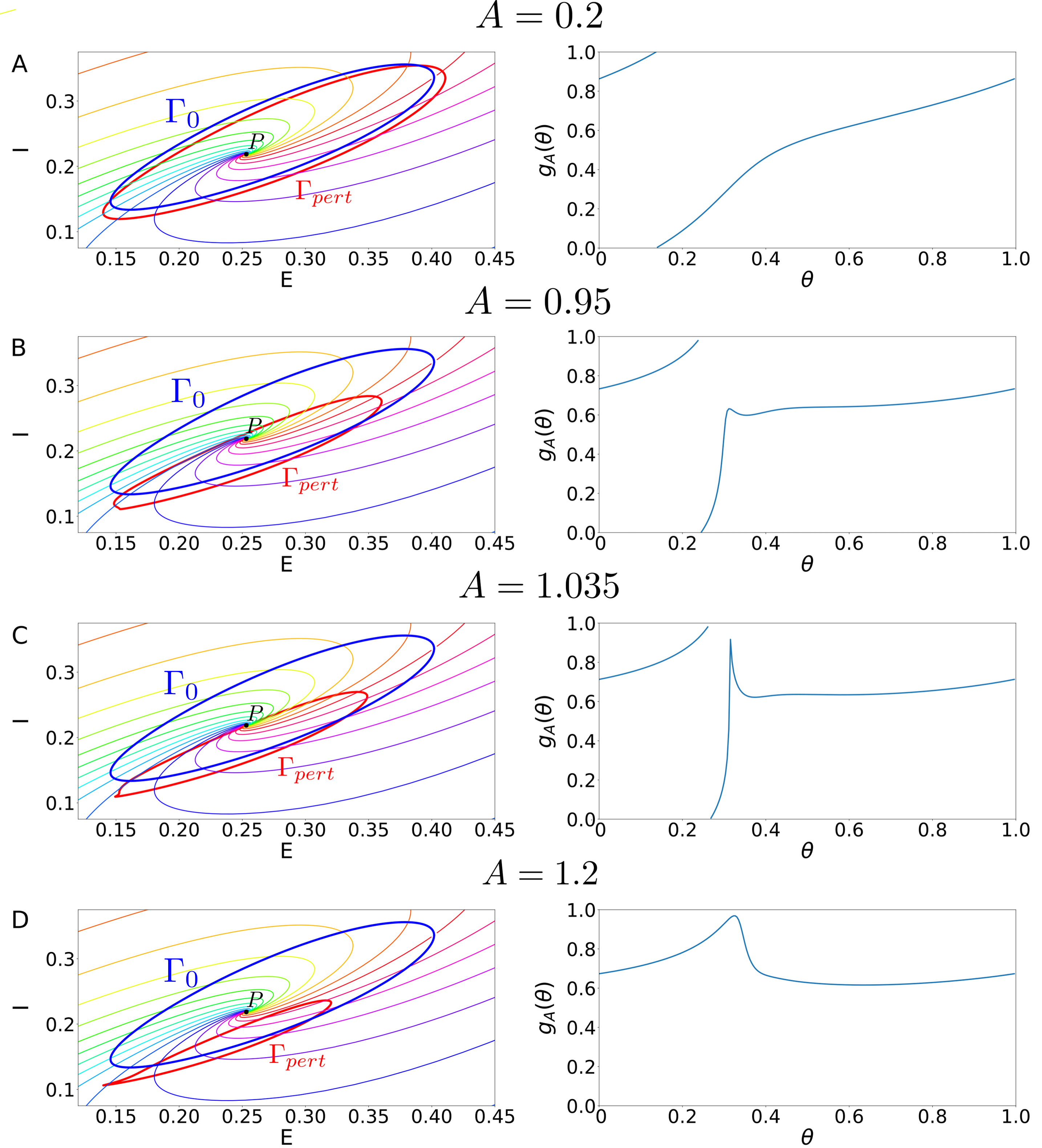}}
\caption{For different values of the amplitude (indicated in each panel) we show the 
isochrons of the unperturbed limit cycle $\Gamma_0$ for the Wilson-Cowan model near a Hopf bifurcation (WC-Hopf), together with the curves $\Gamma_{pert}$ (left) and 
the functions $g_A$ obtained with the modified parameterization method (right). 
The amplitudes selected cover the breakdown of the curve $\Gamma_A$ and a transition from type 1 to type 0 PRCs (see text).} \label{fig:prcsHopfCase}
\end{figure}

\begin{figure}[H]
\centering
{\includegraphics[width=80mm]{./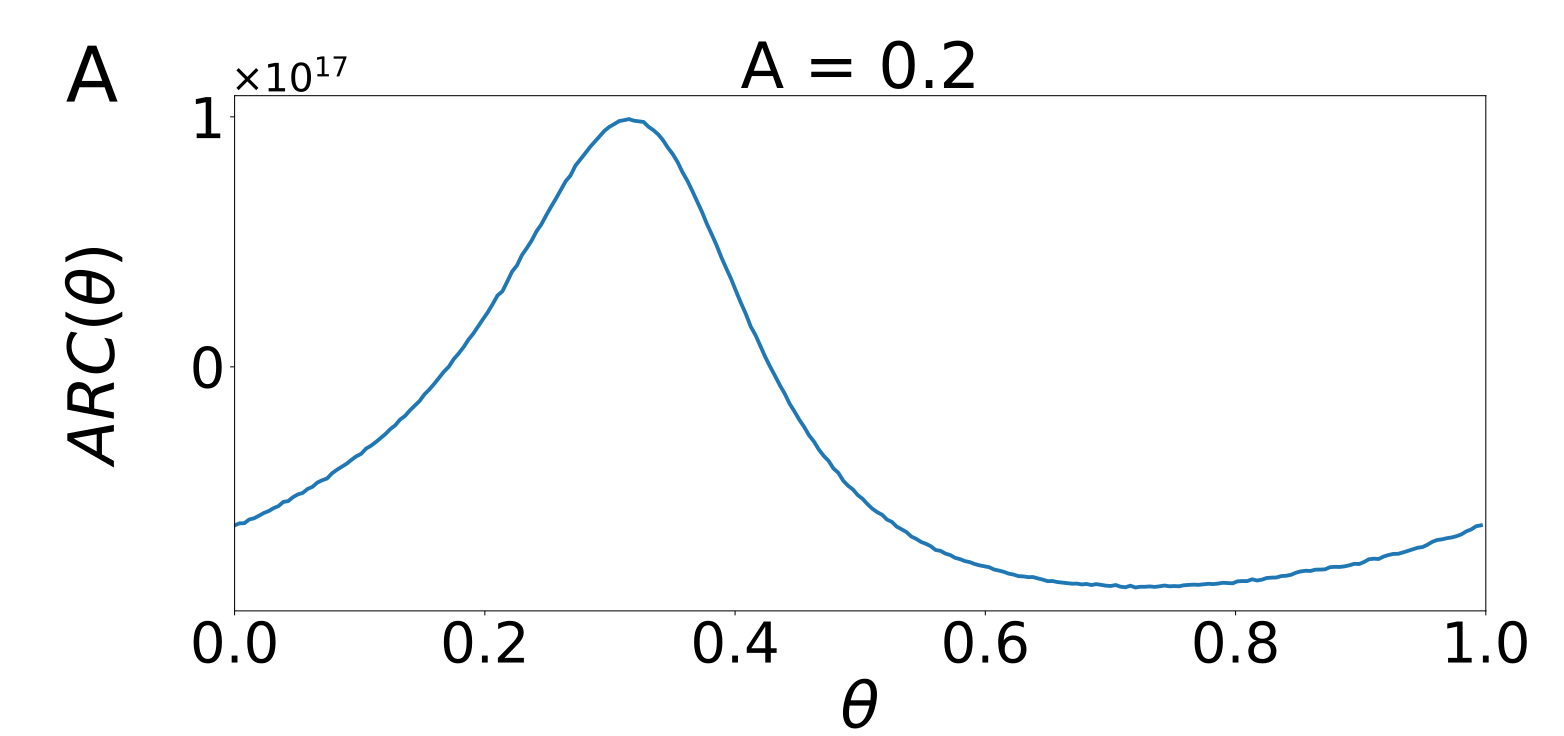}}
{\includegraphics[width=80mm]{./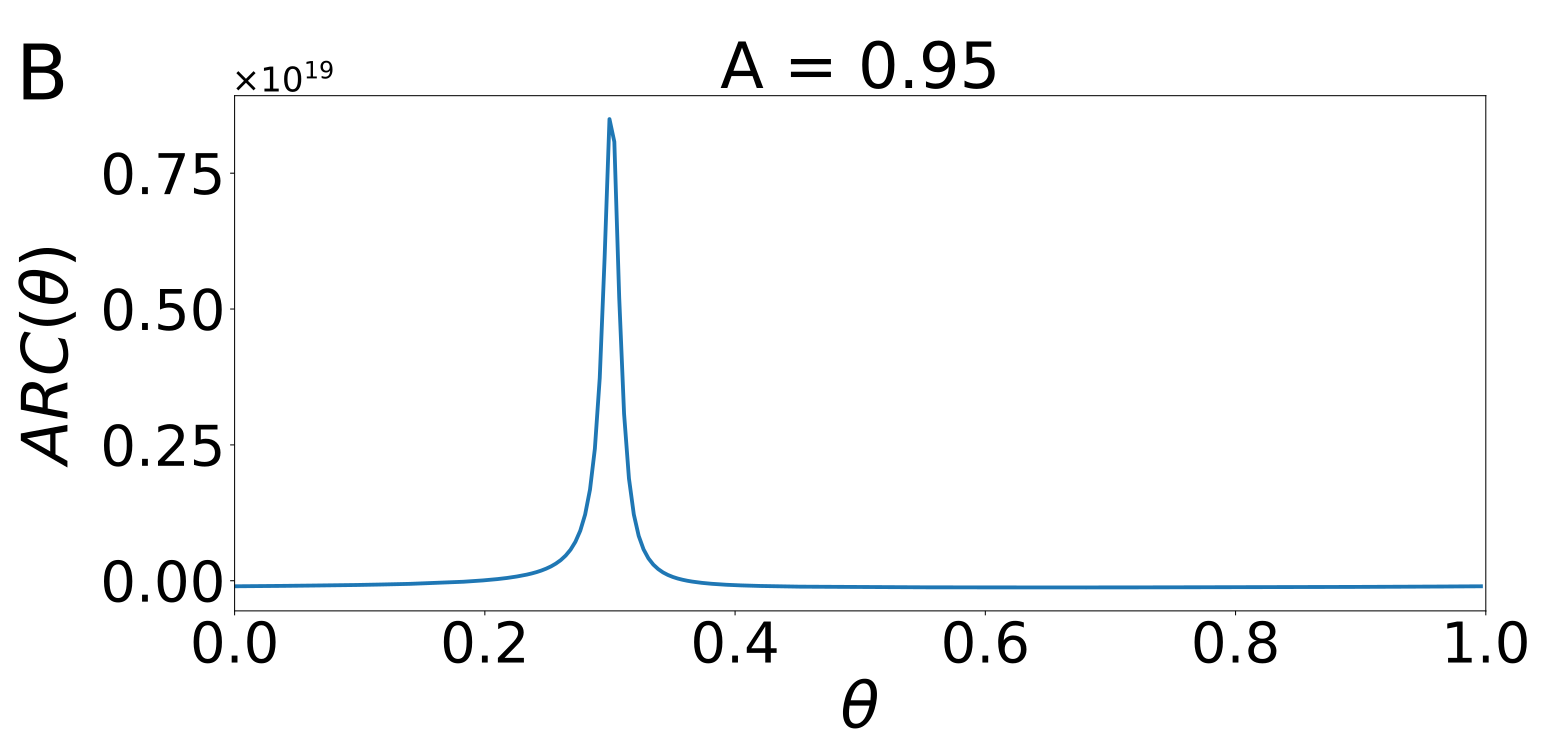}}
{\includegraphics[width=80mm]{./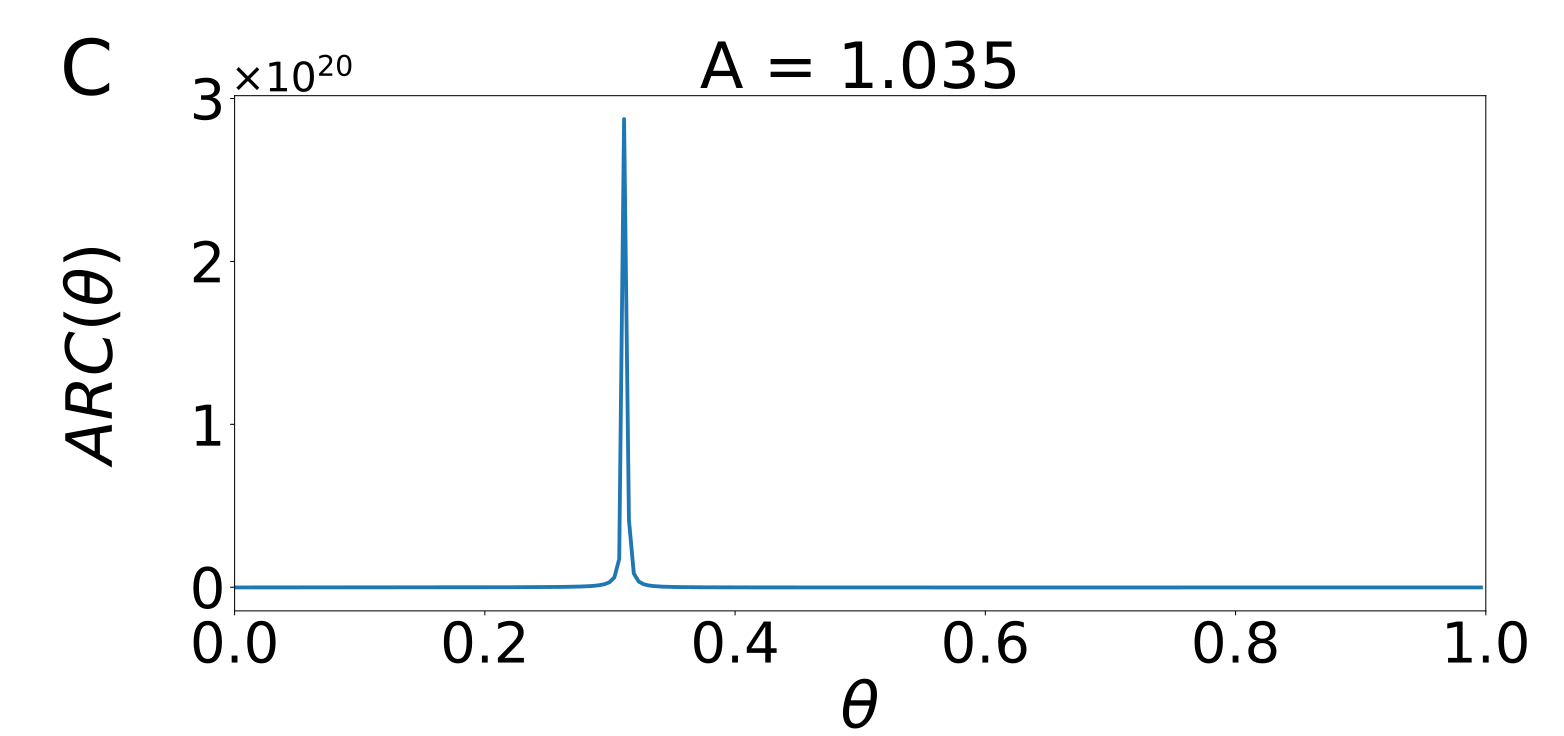}}
{\includegraphics[width=80mm]{./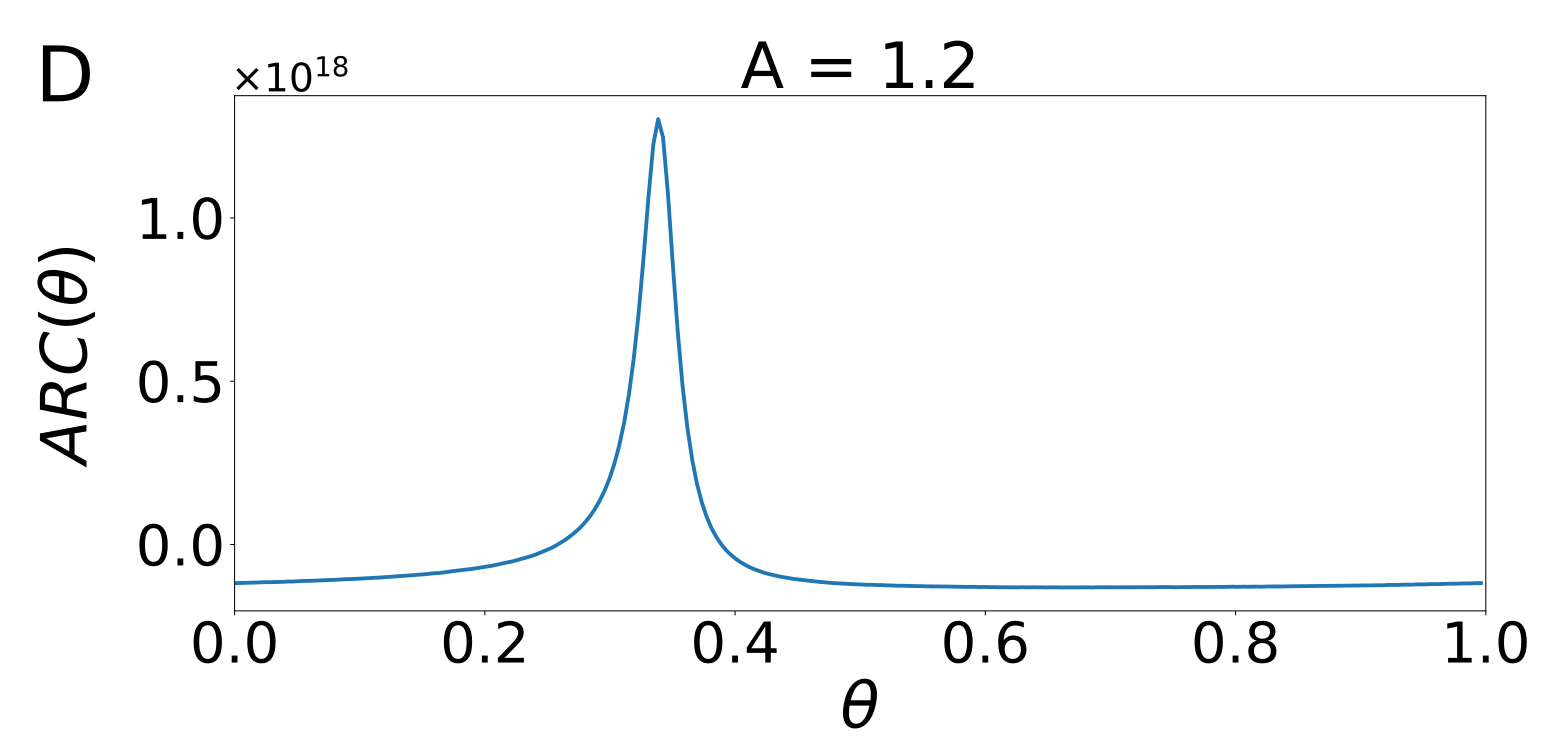}}
\caption{ARCs for the Wilson-Cowan model near a Hopf bifurcation (WC-Hopf) and different values of the
amplitude (same as in Fig.~\ref{fig:prcsHopfCase}).} 
\label{fig:arcsHopfCase}
\end{figure}

A similar phenomenon as discussed for the WC-Hopf occurs for the WC-SNIC and ML-SNIC examples. The Morris-Lecar model near a Hopf bifurcation (ML-Hopf) is slightly different, since 
in this case the phaseless set is larger compared to the 
WC-Hopf case: it is a positive measure set bounded by an unstable limit cycle $\Gamma_u$ which determines the basin of attraction of the 
equilibrium point lying in its interior (see Fig.\ref{fig:breakdownHopfML}A).
As in the WC-Hopf case, the invariant curve $\Gamma_A$ for the ML-Hopf disappears due to an isochron tangency of $\Gamma_{pert}$. Consistently with this tangency, $\Gamma_{pert}$ for $A=33$ crosses some isochrons more than once (see Fig.\ref{fig:breakdownHopfML}A and zoom in B) and as Figs.\ref{fig:breakdownHopfML} C, D show,
the function $g_A$ looses its monotonicity. 
Nevertheless, one can still compute the PRC by means of the Algorithm~\ref{alg:algorithm_new0}. 
By contrast, for larger amplitude values, several points of $\Gamma_{pert}$ leave the basin of attraction of the stable limit cycle 
(see Fig.\ref{fig:breakdownHopfML}A and zoom in B for $A=40$). 
Thus, the PRC can no longer be computed even with the modified method.

\begin{figure}[H]
\centering
{\includegraphics[width=160mm]{./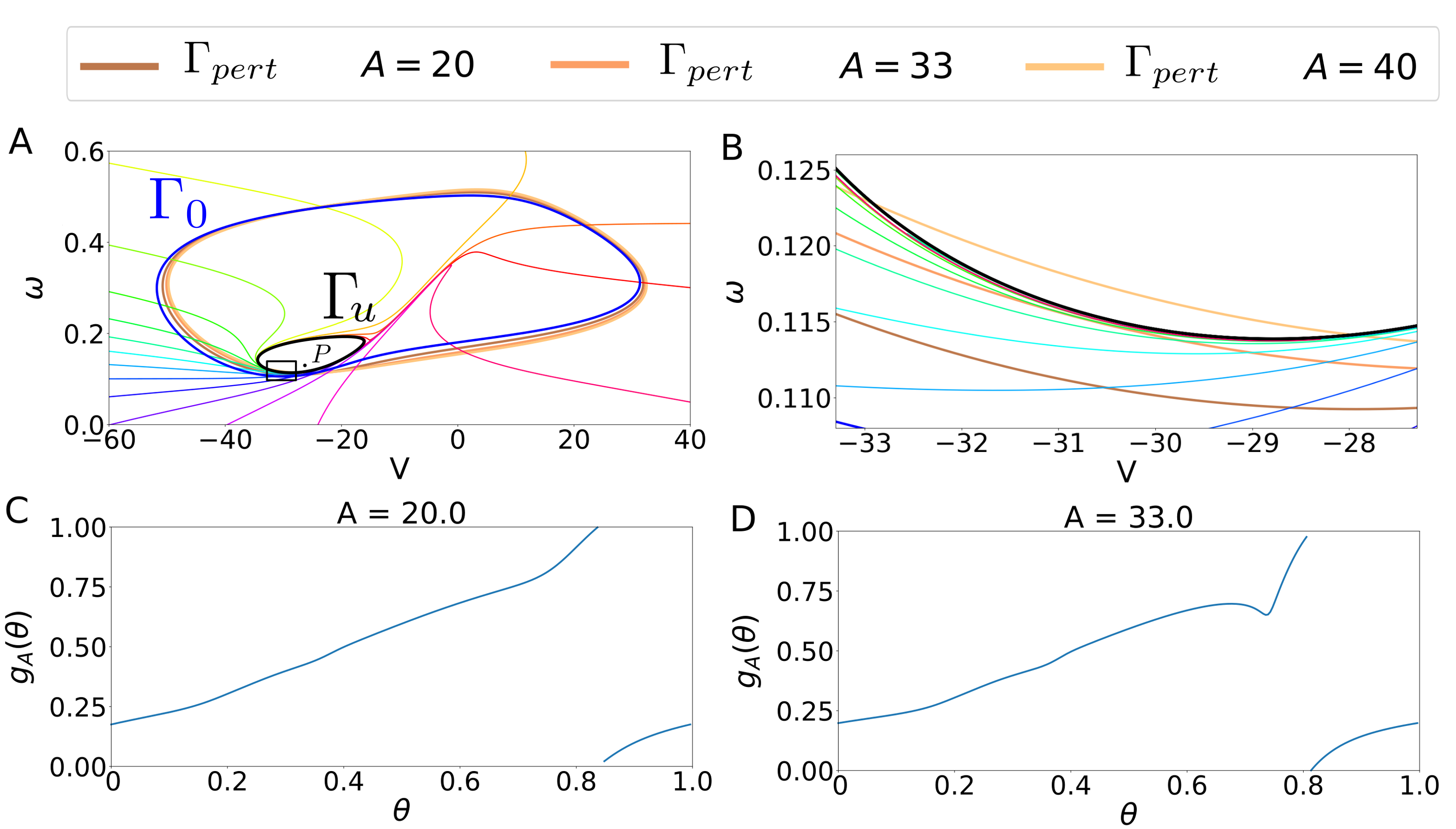}}
\caption{Phase space for ML-Hopf showing the stable limit cycle $\Gamma_0$, its isochrons, and the unstable limit cycle $\Gamma_u$ 
determining the basin of attraction of a stable focus $P$. 
For different values of the amplitude we show (A) the curves $\Gamma_{pert}$ and (B) a zoom close to the isochron tangency. 
Panels C and D show the functions $g_A$ obtained with the modified parameterization method for $A=20$ (C) and $A=33$ (D).
For $A=40$ some points on $\Gamma_{pert}$ intersect the basin of attraction of the stable focus $P$, and the function $g_A$ can not be computed.} 
\label{fig:breakdownHopfML}
\end{figure}

\section{Discussion}\label{sec:discussion}

In this paper we have introduced a new approach to PRCs based on the parameterization method. The main idea of the method is to introduce a periodic perturbation consisting of 
the actual perturbation followed by a relaxation time $T_{rel}$ that repeats periodically. This periodic perturbation allows us to define the corresponding stroboscopic map $F_A$ of the periodically perturbed system. 
The main result of this paper is Theorem~\ref{thm:theorem1}, where we prove the existence of an invariant curve $\Gamma_A$ for the map $F_A$ and we link its internal dynamics $f_A$ with the PRC. 
The proof relies on the parameterization method, which defines an invariance equation for the invariant curve and its internal dynamics \cite{Cabre2005, canadell2014parameterization, haro2006parameterization, haro2007parameterization}, and provides
a numerical algorithm to compute both.

Moreover, Theorem~\ref{thm:theorem1} establishes conditions for the existence of the invariant curve $\Gamma_A$. More precisely, although the range of amplitude values $A$ for which the invariant curve exists can be increased by 
considering a sufficiently large $T_{rel}$, there is a limitation established by the geometry of the isochrons. Indeed, whenever the curve 
$\Gamma_{pert}$ (the displacement of the limit cycle due to the active part of the perturbation) becomes tangent to some isochron of $\Gamma_0$, 
the invariant curve $\Gamma_A$ loses its normally hyperbolic properties and breaks down. Moreover, we explain how the isochrons of the unperturbed system and the perturbation interact to shape the waveform of the PRC as the amplitude of the perturbation increases.

Besides theoretical results, we present some strategies to compute the PRC. In \cite{Haro2016} one can find algorithms that implement a quasi-Newton method to solve the above mentioned invariance equation. 
The method though relies on the existence of an invariant curve. Nevertheless, as the PRC can be computed beyond the breakdown of the curve $\Gamma_A$, 
we have developed a modified numerical method to compute PRCs inspired by the parameterization method. This method solves a modified invariance equation, which avoids the computation of the invariant curve (thus making the 
computations faster), and is able to compute PRCs beyond the breakdown of the curve $\Gamma_A$ and the transition from type 1 to type 0 PRCs.
In addition, this algorithm computes not only PRCs but also ARCs, which provide information about the effects of the perturbation onto the amplitude variables (or alternatively, the displacement away from the limit cycle).
We show examples of the computed ARCs and the relationship between its shape and the transitions experienced by the PRC. 

In order to assess the validity of the method, we have applied it to two models in neuroscience: 
a neural population model (Wilson-Cowan) and a single neuron model (Morris-Lecar). 
Moreover, we have studied both models for values of the parameters near two different bifurcations: Hopf and Saddle Node on Invariant Circle (SNIC) bifurcations. 
Recall that PRCs are classified as type 1 or type 2 according to their shape, and this property is associated to a particular bifurcation:
type 1 PRCs mainly advance phase, and they are related to a SNIC bifurcation, 
while type 2 PRCs can either advance or delay the phase, and they are related to a Hopf bifurcation \cite{ermentrout1996type, oprisan2002influence,smeal2010phase}. 
The numerical examples presented show the evolution of both PRC types for large amplitude values. 
In all the examples considered, the PRCs preserve its type as the amplitude increases but the phase shifts tend to augment.

In this work, we have mainly developed the theory and numerical examples for two-dimensional systems. However, the underlying theorems and numerical algorithms have an straightforward extension for the case $n>2$ that we plan to explore in detail in future work.

\section{Acknowledgments}

This work has been partially funded by the grants MINECO-FEDER MTM2015-65715-P, MDM-2014-0445, PGC2018-098676-B-100 AEI/FEDER/UE, the Catalan grant 2017SGR1049, (GH, AP, TS), the MINECO-FEDER-UE MTM-2015-71509-C2-2-R (GH), 
and the Russian Scientific Foundation Grant 14-41-00044 (TS).
GH acknowledges the RyC project RYC-2014-15866. TS is supported by the Catalan Institution for research and advanced studies via an ICREA academia price 2018.
AP acknowledges the FPI Grant from project MINECO-FEDER-UE MTM2012-31714. We thank C. Bonet for providing us valuable references to prove Theorem~\ref{thm:theorem1} and A. Granados for numeric support.
We also acknowledge the use of the UPC Dynamical Systems group's cluster for research computing
(https://dynamicalsystems.upc.edu/en/computing/).

\bibliographystyle{abbrv}
\bibliography{bibPrcs}

\appendix

\section{Appendix. Numerical algorithms}\label{sec:num_alg}

In this Section we review the numerical algorithms introduced in \cite{Haro2016} to compute the
parameterization of an invariant curve $\Gamma_A$ of a given map $F:=F_A$ as well as the dynamics on
the curve, i.e. $f:=F|_{\Gamma_A}$. We present the algorithms in a format that is ready for numerical
implementation and we refer the reader to \cite{Haro2016, perez2018computation} for more details.

The method to compute the invariant curve consists in looking for a map
$K:\mathbb{T} \rightarrow \mathbb{R}^2$ and a scalar function $f:\mathbb{T} \rightarrow \mathbb{T}$
satisfying the invariance equation
\begin{equation}\label{eq:invarianceEq}
F(K(\theta)) = K(f(\theta)).
\end{equation}

In order to solve equation \eqref{eq:invarianceEq} by means of a Newton-like method, one needs to
compute alongside the invariant normal bundle of $K$, denoted by $N$, and its
linearised dynamics $\Lambda_N$, which satisfy the invariance equation
\begin{equation}\label{eq:bundlesCompact}
DF(K(\theta))N(\theta) = DK(f(\theta))\Lambda_N(\theta).
\end{equation}

Thus, the main algorithm provides a Newton method to solve equations \eqref{eq:invarianceEq} and
\eqref{eq:bundlesCompact} altogether. More precisely, at step $i$ of the Newton method one computes successive
approximations $K^i$,$f^i$, $N^i$ and $\Lambda_N^i$ of $K$, $f$, $N$ and $\Lambda_N$, respectively. The
algorithm is stated as follows:

\begin{algorithm}\label{alg:algorithm1}
\textbf{Main algorithm to solve equations \eqref{eq:invarianceEq}-\eqref{eq:bundlesCompact}}.
Given $K(\theta)$, $f(\theta)$, $f^{-1}(\theta)$, $N(\theta)$ and $\Lambda_N(\theta)$,
approximate solutions of equations \eqref{eq:invarianceEq} and \eqref{eq:bundlesCompact},
perform the following operations:
\begin{enumerate}
\item 
Compute the corrections $\Delta K(\theta)$ and $\Delta f(\theta)$ by using Algorithm~\ref{alg:algorithm2}.
\item 
Update $K(\theta) \leftarrow K(\theta) + \Delta K(\theta) \quad \quad  f(\theta) \leftarrow f(\theta) + \Delta f(\theta)$.
\item 
Compute the inverse function $f^{-1}(\theta)$ using Algorithm \ref{alg:algorithm3}.
\item 
Compute $DK(\theta)$ and $Df(\theta)$.
\item 
Compute the corrections $\Delta N(\theta)$ and $\Delta_N (\theta)$ by using Algorithm \ref{alg:algorithm5}.
\item 
Update $N(\theta) \leftarrow N(\theta) + \Delta N(\theta) \quad \quad  \Lambda_N(\theta) \leftarrow \Lambda_N(\theta) + \Delta_N (\theta)$.
\item 
Compute $E = F \circ K - K \circ f$ and repeat steps 1-6 until $E$ is smaller than the established tolerance.
\end{enumerate}
\end{algorithm}

Next, we provide the sub-algorithms for algorithm \ref{alg:algorithm1}.

\begin{algorithm}\label{alg:algorithm2}
\textbf{Correction of the approximate invariant curve}.
Given $K(\theta)$, $f(\theta)$, $f^{-1}(\theta)$ $N(\theta)$ and $\Lambda_N(\theta)$, approximate solutions of equations 
\eqref{eq:invarianceEq} and \eqref{eq:bundlesCompact}, perform the following operations:
\begin{enumerate}
\item 
Compute $E(\theta) = F(K(\theta)) - K(f(\theta))$.
\item 
Compute $P(f(\theta))= \Big(DK(f(\theta)) | N (f(\theta))\Big)$.
\item 
Compute $\eta(\theta) = \begin{pmatrix} \eta_T(\theta) \\ \eta_N(\theta) \end{pmatrix}  = -(P(f(\theta)))^{-1}E(\theta)$.
\item 
Compute $f^{-1}(\theta)$ using algorithm \ref{alg:algorithm3}.
\item 
Solve for $\xi$ the equation $\eta_N(f^{-1}(\theta))= \Lambda_N(f^{-1}(\theta))\xi(f^{-1}(\theta)) - \xi(\theta)$ by using algorithm \ref{alg:algorithm4}.
\item 
Set $\Delta f(\theta) \leftarrow - \eta_T(\theta)$.
\item 
Set $\Delta K(\theta) \leftarrow N(\theta)\xi(\theta)$.
\end{enumerate}
\end{algorithm}

\begin{algorithm}\label{alg:algorithm3}
\textbf{Refine $f^{-1}(\theta)$}. Given a function $f(\theta)$, its derivative $Df(\theta)$ and an approximate inverse function $f^{-1}(\theta)$, 
perform the following operations:
\begin{enumerate}
\item Compute $e(\theta) = f(f^{-1}(\theta)) - \theta$.
\item Compute $\Delta f^{-1}(\theta) = - \frac{e(\theta)}{Df(f^{-1}(\theta))}$.
\item Set $f^{-1}(\theta) \leftarrow f^{-1}(\theta) + \Delta f^{-1}(\theta)$.
\item Repeat steps 1-3 until $e(\theta)$ is smaller than a fixed tolerance.
\end{enumerate}
\end{algorithm}

\begin{algorithm}\label{alg:algorithm4}
\textbf{Solution of a fixed point equation}.
Given an equation of the form $B(\theta) = A(\theta)\eta(g(\theta)) - \eta(\theta)$ with $A$,
$B$, $g$ known and $\Vert A \Vert < 1$,
perform the following operations:
\begin{enumerate}
\item Set $\eta(\theta) \leftarrow  B(\theta)$.
\item Compute $\eta(g(\theta))$.
\item Set $\eta(\theta) \leftarrow A(\theta)\eta(g(\theta)) + \eta(\theta)$.
\item Repeat steps 2 and 3 until $|A(\theta)\eta(g(\theta))|$ is smaller than the established tolerance.
\end{enumerate}
\end{algorithm}

\begin{algorithm}\label{alg:algorithm5}
\textbf{Correction of the stable normal bundle}. Given $K(\theta)$, $f(\theta)$, $N(\theta)$ and $\Lambda_N(\theta)$,  approximate solutions of equations \eqref{eq:invarianceEq} and \eqref{eq:bundlesCompact}, perform the following operations:
\begin{enumerate}
\item Compute $E_N(\theta) = DF(K(\theta))N(\theta) - \Lambda_N(\theta)N(f(\theta))$.
\item Compute $P(f(\theta)) = (DK(f(\theta)) \quad N(f(\theta)))$.
\item Compute $\zeta(\theta) = \begin{pmatrix} \zeta_T(\theta) \\ \zeta_N(\theta) \end{pmatrix}  = -(P(f(\theta)))^{-1}E_N(\theta)$.
\item Solve for $Q$ the equation $Df^{-1}(\theta) \zeta_T(\theta)=Df^{-1}(\theta) \Lambda_N(\theta)Q(f(\theta)) - Q(\theta)$ by using algorithm \ref{alg:algorithm4}.
\item Set $\Delta_N (\theta) \leftarrow \zeta_N(\theta)$.
\item Set $\Delta N(\theta) \leftarrow DK(\theta)Q(\theta)$.
\end{enumerate}

\end{algorithm}

\begin{remark}
	Since our functions are defined on $\mathbb{T}$ we will use Fourier
	series to compute the derivatives and composition of functions.
\end{remark}

Of course, the main algorithm requires the knowledge of an approximate solution for
equations~\eqref{eq:invarianceEq}-\eqref{eq:bundlesCompact}. In our case, we can always use the
limit cycle of the unperturbed system as an approximate solution for the invariant curve. However, for the normal bundle we cannot use the one obtained from the unperturbed
limit cycle $\Gamma_0$. The following algorithm provides an initial seed for algorithm \ref{alg:algorithm1}:

\begin{algorithm}\label{alg:algorithm0}
\textbf{Computation of initial seeds}. Given a planar vector field $\dot{x} = X(x)$, having an attracting limit cycle $\gamma(t)$ of period $T$, perform the following operations:
\begin{enumerate}
\item Compute the fundamental matrix $\Phi(t)$ of the first variational equation along the periodic orbit.
\item Obtain the characteristic multiplier $\lambda \neq 1$ and its associated eigenvector $v_{\lambda}$ from $\Phi(T)$.
\item Set $N(\theta) \leftarrow \Phi(T\theta)v_{\lambda}e^{\lambda \theta}$.
\item Set $\Lambda_N(\theta) \leftarrow e^{\frac{\lambda T'}{T}}$.
\item Set $K(\theta) \leftarrow \gamma(T\theta), \quad \quad DK(\theta) = TX(\gamma(T\theta))$.
\item Set $f(\theta) \leftarrow \theta + \frac{T'}{T}, \quad \quad f^{-1}(\theta) \leftarrow \theta - \frac{T'}{T}, \quad \quad Df(\theta) = 1$.
\end{enumerate}
\end{algorithm}

Given a family of maps $F_A$ such that the solution for $F_0$ is known (see algorithm~\ref{alg:algorithm0}), it is standard to set a continuation scheme to compute the solutions for the
other values of $A$. Thus, assuming that the solution for $A=A^{*}$ is known, one can take this solution as an initial seed to solve the equations
for $F_{A^*+h}$, with $h$ small, using the Newton-like method described in algorithm~\ref{alg:algorithm1}.
However, one can perform an extra step to refine the initial seed values $K_{A^ {*}}$ and $f_{A^*}$ for $F_{A^{*}+h}$, described in
algorithm~\ref{alg:algorithm6}.

\begin{algorithm}\label{alg:algorithm6}
\textbf{Refine an initial seed}. Given $K_A(\theta)$, $f_A(\theta)$, $N_A(\theta)$ and $\Lambda_{N,A}(\theta)$, solutions of
equations \eqref{eq:invarianceEq}-\eqref{eq:bundlesCompact} for $F=F_A$, perform the following operations:
\begin{enumerate}
\item Compute $E(\theta) = \frac{\partial F_A}{\partial A}(K_A(\theta))$.
\item Compute $\eta(\theta) = \begin{pmatrix} \eta_T(\theta) \\ \eta_N(\theta) \end{pmatrix}  = -(P(f_A(\theta)))^{-1}E(\theta)$.
\item Solve for $\xi$ the equation $\xi(\theta) = \Lambda_N(f^{-1}_A(\theta))\xi(f^{-1}_A(\theta)) - \eta_N(f^{-1}_A(\theta))$ by using algorithm \ref{alg:algorithm4}.
\item Set $K_{A+h}(\theta) \leftarrow K_A(\theta) + N_A(\theta)\xi(\theta) \cdot h, \quad \quad f_{A+h}(\theta) \leftarrow f_A(\theta) - \eta_T(\theta) \cdot h$.
\end{enumerate}
\end{algorithm}

\begin{remark}
The term $\frac{\partial F}{\partial A}(K_A(\theta))$ is computed using variational equations with respect to the amplitude.
\end{remark}

The numerical continuation scheme for our problem is described in the following algorithm:

\begin{algorithm}\label{alg:continuation}
\textbf{Numerical continuation.} Consider a family of maps $F_A$, such that $F_0$ is the time-$T$ map of a planar autonomous system having a hyperbolic attracting limit cycle of
period $T$. Perform the following operations:
\begin{enumerate}
\item Compute solutions $K_0(\theta)$, $f_0(\theta)$, $N_0(\theta)$ and $\Lambda_{N,0}(\theta)$ of equations \eqref{eq:invarianceEq}-\eqref{eq:bundlesCompact} for $F=F_0$ using algorithm~\ref{alg:algorithm0}.
\item Set $A=0$.
\item Using $K_A$, $f_A$, $N_A$, $\Lambda_{N,A}$ compute an initial seed for $F_{A+h}$ using algorithm~\ref{alg:algorithm6}
\item Find solutions of equations \eqref{eq:invarianceEq}-\eqref{eq:bundlesCompact} for $F=F_{A+h}$ using algorithm~\ref{alg:algorithm1}.
\item Set $A \leftarrow A+h$.
\item Repeat steps 3-5 until $A$ reaches the desired value.
\end{enumerate}
\end{algorithm}

\end{document}